\providecommand{\U}[1]{\protect \rule{.1in}{.1in}}
\newtheorem{theorem}{Theorem}
\newtheorem{lemma}{Lemma}
\newtheorem{proposition}{Proposition}
\newtheorem{remark}{Remark}
\newtheorem{definition}{Definition}
\newtheorem{corollary}{Corollary}
\numberwithin{equation}{section}
\begin{document}
\title[Rigidity of Sasaki-Ricci Solitons]{Transverse Rigidity of Shrinking Sasaki-Ricci Solitons}
\author{$^{\ast}$Shu-Cheng Chang}
\address{$^{\ast}$Department of Mathematics, National Taiwan University, Taipei 10617,
Taiwan and Shanghai Institute of Mathematics and Interdisciplinary Sciences,
Shanghai, 200433, China}
\email{scchang@math.ntu.edu.tw}
\author{$^{\dag}$Fengjiang Li}
\address{$^{\dag}$Mathematical Science Research Center, Chongqing University of
Technology, 400054, Chongqing, P.R. China}
\email{fengjiangli@cqut.edu.cn}
\author{$^{\dag \dag}$Chien Lin}
\address{$^{\dag \dag}$Department of Mathematics, National Taiwan Normal University,
Taipei, 116059, Taiwan}
\email{chienlin@ntnu.edu.tw}
\author{$^{\dag \dag \dag}$Hongbing Qiu}
\address{$^{\dag \dag \dag}$School of Mathematics and Statistics, Wuhan University, Wuhan
430072, China }
\email{hbqiu@whu.edu.cn}
\thanks{$^{\ast}$Shu-Cheng Chang is partially supported by Startup Foundation for
Advanced Talents of the Shanghai Institute for Mathematics and
Interdisciplinary Sciences (No.2302-SRFP-2024-0049). $^{\dag}$Fengjiang Li is
partially supported by NSFC (No.12301062 and 12571050) and NFSCQ
(No.STB2024NSCQ-MSX0537). $^{\dag \dag}$Chien Lin is partially supported by
NSTC 114-2115-M-003-002-MY2.}
\date{}

\begin{abstract}
In this paper, we study several properties of Sasaki-Ricci solitons as
singularity models of the Sasaki-Ricci flow. First, we establish several
fundamental equations for Sasaki-Ricci solitons, which enable us to derive
potential estimates and prove the positivity of the scalar curvature. Then we
present two criteria for the transverse rigidity of Sasaki-Ricci solitons. As
essential applications, we prove that any low-dimensional Sasaki-Ricci soliton
with constant scalar curvature must be Sasaki-Einstein, and that any
Sasaki-Ricci soliton with harmonic Weyl tensor is a finite quotient of the sphere.

\end{abstract}
\keywords{Sasaki-Ricci Flow, Sasaki-Ricci Soliton, Sasaki-Einstein Metric, Sasakian
Metric of Constant Scalar Curvature, Rigidity.}
\subjclass{Primary 53E50, 53C25; Secondary 53C12, 14E30.}
\maketitle

\section{Introduction}

Inspired by Eells and Sampson's work on harmonic map heat flow in 1960's,
Hamilton engaged in studying the deformation of Riemannian metrics along the
so-called Ricci flow. Since publishing the seminal paper \cite{ha82} in 1982,
the Ricci flow method has become a natural and essential tool not only to
attack the Poincar\'{e} conjecture and Thurston's geometrization conjecture
(see\cite{pe02}, \cite{pe03.1}, \cite{pe03.2}, \cite{cz06}) but to unveil the
existence of canonical metrics including Einstein metrics, metrics of constant
scalar curvature, Ricci solitons, extremal metrics (see\cite{ha82},
\cite{ha86}, \cite{ha88}, etc.). However, the metrics evolved by the Ricci
flow usually come up with the possible formation of singularities which happen
in the highly-curved area. Ricci solitons often arise as the blow-up limits of
singularity dilations along the Ricci flow. More precisely, Hamilton
\cite{ha95} showed that any maximal solution of Type I, II or III to the Ricci
flow satisfying some mild condition (like the injectivity radius condition or
bounded nonnegative curvature operator) admits a sequence of singularity
dilations of the solution which converges to a singularity model of the
corresponding Type in the $C_{loc}^{\infty}$-topology. And, under the proper
assumption, Hamilton \cite{ha93}, Enders-M\"{u}ller-Topping \cite{emt11},
Chen-Zhu \cite{cz00} and Cao \cite{cao97} demonstrated that the singularity
models of Type I, II or III shall be the shrinking, steady or expanding Ricci
solitons respectively.

A \textsl{Ricci soliton} is meant to be the triple $\left(  M,g,X\right)  $
which consists of a manifold $M$, a Riemannian metric $g$, and a vector field
$X$ on $M$ and satisfies the soliton equation%
\[
Ric+\frac{1}{2}L_{X}g+\lambda g=0
\]
for a constant $\lambda$. In addition, when the vector field $X$ is a gradient
field of some smooth function $f$, we say such Ricci soliton $\left(
M,g,f\right)  \doteqdot \left(  M,g,\nabla f\right)  $ a \textsl{gradient Ricci
soliton} with the potential $f$. A Ricci soliton is called of
\textsl{shrinking} (resp. \textsl{steady} or \textsl{expanding}) whenever the
constant $\lambda$ is negative (resp. zero or positive). Due to being the
singularity models to the sequences of dilations, the classification of
(complete gradient) Ricci solitons is a fundamental and central issue in the
framework of the Ricci flow. We refer the interested reader to the survey
\cite{cao10} and references therein for the background. From the point\ of the
soliton equation, people often view the Ricci solitons as a generalization of
Einstein metrics. So it is intriguing to ask whether there is non-Einstein
Ricci soliton. In the compact case, Perelman \cite{pe02} asserted that any
Ricci soltion must be of gradient type. Combining with Hamilton's results
\cite{ha88}\cite{ha95} and Ivey's one \cite{i93}, we see that any compact
steady or expanding Ricci soliton is Einstein, and compact shrinking Ricci
solitons of dimension at most three are Einstein. And the first non-Einstein
compact shrinking Ricci soliton was constructed by Koiso \cite{koi90} and Cao
\cite{cao96} independently on the one-point blow-up $Bl_{1}%
\mathbb{C}
\mathbb{P}^{2}$ of the complex projective surface. As for the noncompact case,
there are many examples of non-Einstein gradient Ricci solitons such as the
complete rotationally symmetric steady or expanding gradient Ricci soliton due
to Bryant \cite{br05}, the complete rotationally symmetric shrinking gradient
Ricci solitons attributed to Kotschwar \cite{kot08}, and etc. For the two- and
three-dimensional gradient Ricci soltion had been classified to some extent
(see \cite{ha88}, \cite{br05}, \cite{i92}, \cite{ra13}, \cite{bm15},
\cite{cho23}).

In the authors' own flavor, the rest of this paper shall focus on the complete
gradient Ricci solitons of shrinking type. One reason is that a Riemannian
manifold with positivity/nonnegativity curvature is usually endowed with much
better rigidity; and another comes from Fano Yau-Tian-Donaldson conjecture:
the existence of Fano K\"{a}hler-Einstein metrics is equivalent to the
K-polystability (see\cite{cds15.1}, \cite{cds15.2}, \cite{cds15.3},
\cite{t15}, \cite{sz16}, \cite{ds16}, \cite{csw18}, \cite{bbj21},
\cite{zha24}). As aforementioned in the last paragraph, Hamilton classified
all two-dimensional complete shrinking gradient Ricci solitons: either
isometric to the standard Euclidean plane $%
\mathbb{R}
^{2}$, or the round $2$-sphere $\mathbb{S}^{2}$, or the real projective plane
$%
\mathbb{R}
\mathbb{P}^{2}$. For the complete three-dimensional gradient shrinking Ricci
solitons, Ivey \cite{i93}, Perelman \cite{pe03.1}, Ni-Wallach \cite{nw08}, and
Cao-Chen-Zhu \cite{ccz08} demonstrated that it shall be isometric to a finite
quotient of either the round $3$-sphere $\mathbb{S}^{3}$, or the shrinking
Gaussian soliton $%
\mathbb{R}
^{3}$, or the round cylinder $\mathbb{S}^{2}\times%
\mathbb{R}
$. Hence, it is natural to investigate the classification of complete
four-dimensional gradient Ricci solitons. To our limited knowledge, it is
actually far from the complete classification although there is substantial
progress toward this objective in recent years. Fortunately, in the scheme of
K\"{a}hler geometry, such classification problem about the K\"{a}hler-Ricci
shrinker surfaces, i.e. complete two-dimensional shrinking gradient
K\"{a}hler-Ricci solitons, has been done by Yu Li and Bing Wang \cite{lw23}
last year. Actually, they manifested any two-dimensional complete shrinking
K\"{a}hler-Ricci soliton admits bounded scalar curvature which implies the
bounded Riemannian curvature by Munteanu-Wang \cite{mw15}; then, combining
this with the works of the existence and uniqueness about the compact
K\"{a}hler-Ricci shrinker surfaces (cf. \cite{ty87}, \cite{si88}, \cite{t90},
\cite{koi90}, \cite{cao96}, \cite{wz04}, \cite{bm87}, \cite{tzhu00},
\cite{tzhu02}) and the noncompact case (cf. \cite{fik03}, \cite{cds24},
\cite{ccd22}, \cite{bccd24}), it could be derived that any K\"{a}hler-Ricci
shrinker surface must be isometrically biholomorphic to one of the following:
\[
\left \{
\begin{array}
[c]{cl}%
\left(  \mathrm{1}\right)  & \mathrm{Closed}\text{ \textrm{del Pezzo
surfaces}}\\
\left(  \mathrm{2}\right)  & \text{\textrm{Gaussian soliton on} }%
\mathbb{C}
^{2}\\
\left(  \mathrm{3}\right)  & \text{\textrm{FIK shrinker on} }Bl_{p}%
\mathbb{C}
^{2}\\
\left(  \mathrm{4}\right)  & \left(
\mathbb{C}
\mathbb{P}^{1}\times%
\mathbb{C}
,g_{sta}\right) \\
\left(  \mathrm{5}\right)  & \text{\textrm{BCCD shrinker on} }Bl_{p}\left(
\mathbb{C}
\mathbb{P}^{1}\times%
\mathbb{C}
\right)
\end{array}
\right.  .
\]

Besides the classification of K\"{a}hler-Ricci shrinker surfaces, which
benefits from techniques in K\"{a}hler geometry, an attempt to figure out the
structure about complete four-dimensional gradient Ricci solitons is to impose
some geometric restrictions on reducing its complication to be manipulated
availably. Before giving a very brief survey on the related results, we
primarily present the notion of rigidity introduced by Petersen and Wylie
\cite{pw09}. As a mixed extension of Einstein metrics and Gaussian solitons,
they define a gradient Ricci soliton $\left(  M,g,f\right)  $ to be
\textsl{rigid} if it is isometric to a quotient space $N\times_{\Gamma}%
\mathbb{R}
^{k}$, where $N$ is an Einstein manifold and $f=\frac{\lambda}{2}\left \vert
x\right \vert ^{2}$ on the Euclidean factor. Here $\Gamma$ acts freely on $N$
and by orthogonal transformations on $%
\mathbb{R}
^{k}$. With the help of the fact that the fundamental group of a shrinking
gradient Ricci soliton is finite \cite{wy08}, a shrinking gradient Ricci
soliton is rigid if it is isometric to a finite quotient space. In terms of
the language, the aforementioned results says that every complete shrinking
gradient Ricci soliton of dimension less than four is rigid. For the
high-dimensional case of the classification, the difficulty mostly arises from
the presence of the Weyl tensor. It was verified that any complete locally
conformally flat $n$-dimensional shrinking gradient Ricci soliton must be
rigid; accurately it is isometric to a finite quotient of either
$\mathbb{S}^{n}$, $\mathbb{S}^{n-1}\times%
\mathbb{R}
$, or $%
\mathbb{R}
^{n}$ (see \cite{cwz11}, \cite{elnm08}, \cite{ms13}, \cite{nw08}, \cite{pw10},
\cite{zha09.1} and the references therein). More generally,
Fern\'{a}ndez-L\'{o}pez and Garc\'{\i}a-R\'{\i}o \cite{flgr11}, and Munteanu
and Sesum \cite{ms13} showed that, under the hypothesis of carrying the
harmonic Weyl tensor, every complete $n$-dimensional shrinking gradient Ricci
soliton shall be rigid. More references about putting various conditions on
the Weyl-type tensors could be referred to \cite{cz23}. On the other hand,
Naber \cite{n10} proved that a complete noncompact four-dimensional shrinking
gradient Ricci soliton with bounded nonnegative curvature operator, then it is
isometric to $%
\mathbb{R}
^{4}$ or to a finite quotient of either $\mathbb{S}^{2}\times%
\mathbb{R}
^{2}$ or $\mathbb{S}^{3}\times%
\mathbb{R}
$. It can be found that all of these solitons are rigid. On the contrary, as
mentioned before, the first non-rigid compact shrinking gradient
K\"{a}hler-Ricci solitons of complex dimension two were constructed by Koiso
\cite{koi90} and Cao \cite{cao96} independently. Therefore, it is imperative
to characterize the rigidity of a gradient Ricci soliton. Such
characterization was done by Eminenti-La Nave-Mantegazza \cite{elnm08} for
compact gradient Ricci solitons and Petersen-Wylie \cite{pw09} for general
case, that is a gradient Ricci soliton is rigid if and only if it admits
constant scalar curvature and it is radially flat which means that the
sectional curvature along the direction of $\nabla f$ vanishes%
\[
K\left(  \cdot,\nabla f\right)  =0.
\]
It is worth to note that the radially flat condition is redundant for compact
case. Motivated by this characterization, people was engaged in the
classification of complete shrinking gradient Ricci solitons under the
assumption of constant scalar curvature. By the endeavors of Petersen-Wylie
\cite{pw09} and Fern\'{a}ndez\textbf{-}L\'{o}pez - Garc\'{\i}a-R\'{\i}o
\cite{flgr16}, it has been shown that the possible values of the scalar
curvature of a complete $n$-dimensional shrinking gradient Ricci solitons with
constant scalar curvature must lie in the set $\left \{  k\lambda \right \}
_{k=0}^{n}$, the shrinking soltion must be rigid when the scalar curvatures
take the first value $0$ and the last two values $\left(  n-1\right)  \lambda$
and $n\lambda$, and it can not achieve the value $\lambda$ as the scalar
curvature. More properties can be found in their papers. Recently, Xu Cheng
and Detang Zhou \cite{cz23} fill in the last piece of puzzle to classify all
complete gradient four-dimensional Ricci solitons of constant scalar
curvature, i.e. all of them are rigid.

In 1962, Sasaki and Hatakeyama \cite{sh62} introduced the concept of "Sasakian
structures" under the name of "normal contact metric structures". With the
influence of the works \cite{bw58}, \cite{hat63}, \cite{ta63}, \cite{ou85},
\cite{bg00}, \cite{bg08} and etc., over the years, Sasakian manifolds are
served as an odd-dimensional analogue of K\"{a}hler manifolds. For instance,
the K\"{a}hler cone of a Sasaki-Einstein $5$-manifold is a Calabi-Yau
$3$-fold. It provides intriguing examples of the AdS/CFT correspondence.
Motivated by the content as precedes, it is very natural to ask whether there
are assembling outcomes in the category of Sasakian geometry. That is if there
exist the characterization of the rigidity of Sasaki-Ricci solitons and its
(rough) classification in low-dimensional Sasakian manifolds. Our goal of the
series of papers is to address the classification problem of Sasaki-Ricci
solitons of dimension five as the Sasakian version of Li-Wang's paper
\cite{lw23}. In this paper, we primarily focus on the first question and the
second one will be discussed in the future study. As a typical generalization
of Sasaki-Einstein metrics, the triple $\left(  M^{2n+1},g,X\right)  $ is
called a \textsl{(shrinking) Sasaki-Ricci soliton} \textsl{with the
Hamiltonian potential} $\psi$ \textsl{with respect to the Hamiltonian
holomorphic vector field} $X$ if it satisfies
\[
Ric^{T}+\frac{1}{2}L_{X}g^{T}-\left(  2n+2\right)  g^{T}=0.
\]
The precise definition of Sasaki-Ricci solitons could be referred to the
preliminary section.

Similar to the result that all $2$-dimensional complete nonflat gradient
shrinking Ricci solitons must be compact and Einstein in \cite{ha95}, we have
the fact that any complete Sasaki-Ricci soliton of dimension $3$ shall reduce
to the compact Sasaki-Einstein one. This could be observed from Theorem 4.9 in
\cite{cc09} and Theorem 1 in \cite{be01} with our result about the positivity
of scalar curvature $R$ of Sasaki-Ricci solitons as below; more precisely, by
taking $\epsilon=2$, $A=\frac{R+2}{2}>1$, $B=0$ in their notations, Theorem
4.9 in \cite{cc09} which furnishes us a complete adapted metric $g_{\epsilon}$
with Ricci curvature bounded from below by a positive constant ensures that
the underlying manifold is compact, and then the result is easily deduced from
Theorem 1 in \cite{be01}. Consequently, it is intriguing to ask whether such
fact still hold for complete Sasaki-Ricci solitons of dimension $5$ or not. As
an application of the criterion in the following, it is true in the situation
of constant scalar curvature.

The first result of the paper is about the quantification of possible scalar
curvatures of a Sasaki-Ricci soliton with constant scalar curvature as below.

\begin{proposition}
If $\left(  M^{2n+1},g,X\right)  $ is a complete Sasaki-Ricci soliton with
constant scalar curvature $R$ and the Hamiltonian potential $\psi$ with
respect to $X$, then $R$ must lie in the finite set $\left \{  \left(
2n-1\right)  k+\left(  2n+1\right)  \right \}  _{k=1}^{2n+1}$.
\end{proposition}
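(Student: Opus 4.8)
The plan is to transplant the Petersen--Wylie and Fern\'andez-L\'opez--Garc\'ia-R\'io analysis of constant scalar curvature shrinking gradient Ricci solitons into the transverse K\"ahler setting, and to deduce from the constancy of $R$ a pointwise two-value quantization of the spectrum of the Ricci operator.

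First I would assemble the structural identities. Writing the defining equation as $Ric^{T}+\frac{1}{2}L_{X}g^{T}=(2n+2)g^{T}$, so that $\frac{1}{2}L_{X}g^{T}$ plays the role of the transverse complex Hessian of the Hamiltonian potential $\psi$, the transverse contracted second Bianchi identity produces the first-order relation
\[
\tfrac{1}{2}\nabla^{T}R^{T}=Ric^{T}(\nabla^{T}\psi).
\]
Since $R$ is constant, so is the transverse scalar curvature $R^{T}=R+2n$; hence $\nabla^{T}\psi$ lies in the kernel of $Ric^{T}$. It is convenient to pass to the full manifold: substituting the Sasakian curvature relation $Ric=Ric^{T}-2g^{T}+2n\,\eta\otimes\eta$ into the soliton equation, the $\eta\otimes\eta$ contributions cancel and one obtains
\[
Ric+\tfrac{1}{2}L_{X}g^{T}=2n\,g,
\]
which exhibits $2n$ as the relevant shrinking constant and shows that the Reeb field $\xi$ is automatically a Ricci eigenvector of eigenvalue $2n$ (from $Ric(\xi,\xi)=2n$ and $Ric(\xi,\cdot)|_{D}=0$). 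Note that $\frac{1}{2}L_{X}g^{T}$ is the \emph{transverse} Hessian rather than a genuine Riemannian one, so this is not an honest gradient Ricci soliton; the discrepancy is exactly the contact correction that will move the lower eigenvalue away from $0$.

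The crux, and the step I expect to cost the most, is to promote the above to a pointwise statement: the symmetric Ricci operator has locally constant eigenvalues assuming only two values. I would argue as in the rigidity scheme: the operator $\frac{1}{2}L_{X}g^{T}=2n\,g-Ric$ is a linear combination of $g$ and $Ric$, hence is simultaneously diagonalizable with $Ric$ and preserves its eigenspaces, so the eigenspace distributions integrate to a transversely holomorphic foliation; constancy of $R^{T}$, fed into the fundamental Bochner-type equations, then yields an ODE for each eigenvalue along the flow of $X$ that forces it to be critical. The genuinely delicate point is to identify the two admissible values. Here the contact correction $-2g^{T}$ enters, and the analysis should culminate in the universal quadratic
\[
Ric^{2}-(2n+1)\,Ric+2n\,\mathrm{Id}=0
\]
satisfied by the Ricci operator, whose roots are $1$ and $2n$; equivalently, every eigenvalue of $Ric$ is either $1$ or $2n$.

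The conclusion is then a counting. Because $\xi$ always realizes the eigenvalue $2n$, let $k\in\{1,\dots,2n+1\}$ be the multiplicity of $2n$, the remaining $2n+1-k$ eigenvalues equalling $1$. Taking the trace,
\[
R=2n\cdot k+1\cdot(2n+1-k)=(2n-1)k+(2n+1),
\]
which is precisely the asserted finite set, the top value $k=2n+1$ giving the Sasaki--Einstein scalar curvature $R=2n(2n+1)$. All the real effort is thus concentrated in the spectral quantization, namely proving that the Ricci eigenvalues are locally constant and can only be $1$ or $2n$; once the quadratic identity is in hand, the reduction and the final trace computation are routine.
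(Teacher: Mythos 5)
There is a genuine gap, and it sits exactly where you said the effort is concentrated: the pointwise spectral quantization. Your claimed ``universal quadratic'' $Ric^{2}-(2n+1)Ric+2n\,\mathrm{Id}=0$, valid everywhere on $M$, is not proved by your sketch and is in fact strictly stronger than the proposition. The simultaneous diagonalizability of $2n\,g-Ric$ with $Ric$ is vacuous (it is a polynomial in $Ric$, so it commutes trivially); the integrability of the eigenspace distributions presupposes that the eigenvalues are locally constant, which is precisely what you are trying to establish, so the argument is circular; and no mechanism is offered for why an ``ODE along the flow of $X$'' should force each eigenvalue into the two-point set $\{1,2n\}$. Worse, if your step were provable it would immediately give $g\leq Ric\leq 2n\,g$ with constant scalar curvature, which by the paper's Theorem \ref{criteria1} (iii)$\Rightarrow$(i) forces transverse rigidity, i.e.\ every constant-scalar-curvature Sasaki--Ricci soliton would be Sasaki--Einstein outright --- making the rank hypothesis in Theorem \ref{criteria2}, the nonexistence analysis for $k=1$, and the dimension-$\leq 7$ corollary all redundant. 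The whole point of the later sections is that such pointwise pinching is an extra hypothesis, not a consequence of constant $R$. There is also a computational slip upstream: the correct transverse identity is $\left(\mathrm{iii}\right)$ of (\ref{n1}), $R_{,i}=2R_{ij}\psi_{,j}-2\psi_{,i}$, so constant $R$ makes $\nabla^{T}\psi$ a Ricci eigenvector with eigenvalue $1$ (equivalently $Ric^{T}(\nabla^{T}\psi)=3\nabla^{T}\psi$), not an element of $\ker Ric^{T}$; your kernel claim would assign $Ric$-eigenvalue $-2$ to $\nabla^{T}\psi$, contradicting your own quadratic.

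The paper avoids pointwise quantization entirely. With $R$ constant, the identities $\left(\mathrm{ii}\right)$ and $\left(\mathrm{iv}\right)$ of (\ref{n1}) show that $\Delta_{B}\psi$ and $\left\vert \nabla\psi\right\vert^{2}-\left(4n-2\right)\psi$ are constant, so $\psi$ is an isoparametric function in the sense of Qi-Ming Wang \cite{wa87}; the potential estimate of Proposition \ref{prop1} (which rests on the positivity of scalar curvature, Theorem \ref{nt}) guarantees that the focal variety $M_{\min}=\left[\psi=\min_{M}\psi\right]$ is nonempty. Wang's structure theory then says $M_{\min}$ is a smooth minimal submanifold on which $Hess\left(\psi\right)$ has only the eigenvalues $0$ (tangentially) and $2n-1$ (normally), so the soliton equation gives $Ric|_{M_{\min}}$ eigenvalues $2n$ with multiplicity $k=\dim M_{\min}\geq 1$ (the Reeb field is tangent to $M_{\min}$) and $1$ with multiplicity $2n+1-k$. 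The constancy of $R$ is then used only to transport the trace computed \emph{at points of} $M_{\min}$ to all of $M$, yielding $R=\left(2n-1\right)k+\left(2n+1\right)$. In short: the two-eigenvalue structure is needed, and is available, only on the focal set --- your final trace count is fine, but the global quadratic identity it rests on is exactly the hard rigidity statement, not a lemma.
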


The method we employed is about the technique in the isoparametric theory
developed by Qi-Ming Wang \cite{wa87} similarly in \cite{flgr16}.

Subsequently, we present our main results about the characterization of the
transverse rigidity of Sasaki-Ricci solitons.

\begin{theorem}
If $\left(  M,g,X\right)  $ is a complete Sasaki-Ricci soliton with the
Hamiltonian potential $\psi$ with respect to $X$, then the following are
equivalent: \newline(i) $\left(  M,g,X\right)  $ is transversely rigid.
\newline(ii) The scalar curvature is constant and it is transversely radially
flat, i.e. \newline$R\left(  \cdot,\nabla^{T}\psi \right)  \nabla^{T}\psi$
vanishes along the horizontal direction. \newline(iii) The scalar curvature is
constant and $g\leq Ric\leq2ng$.
\end{theorem}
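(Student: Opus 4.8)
The plan is to run the cycle of implications $(i)\Rightarrow(ii)\Rightarrow(iii)\Rightarrow(i)$, transplanting the Riemannian rigidity criterion of Petersen--Wylie into the transverse K\"ahler geometry of the Reeb foliation and leaning on the fundamental transverse soliton identities established earlier. Throughout I would work with the transverse Levi--Civita connection and curvature $R^{T}$, with the drift Laplacian $\Delta_{\psi}=\Delta_{B}-\nabla^{T}\psi\cdot\nabla^{T}$ (self-adjoint for the weighted transverse volume $e^{-\psi}dV^{T}$), and with the two workhorse identities obtained by differentiating and tracing the soliton equation: the transverse contracted second Bianchi identity $\nabla^{T}R=2\,Ric^{T}(\nabla^{T}\psi,\cdot)$, so that $R$ is constant if and only if $Ric^{T}(\nabla^{T}\psi)=0$, and the transverse commutation identity $\nabla_{i}R_{jk}^{T}-\nabla_{j}R_{ik}^{T}=-R_{ijkl}^{T}\nabla^{l}\psi$. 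The conversion rules $Ric=Ric^{T}-2g^{T}$ on the contact distribution and $Ric(\xi,\xi)=2n$ let me pass freely between the transverse picture and the Sasakian curvature bounds in $(iii)$.

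For $(i)\Rightarrow(ii)$: transverse rigidity means the local leaf space splits as a transverse K\"ahler--Einstein factor times a factor carrying the Gaussian (quadratic) Hamiltonian potential, with $\nabla^{T}\psi$ tangent to the latter. On such a product the transverse scalar curvature is the constant curvature of the Einstein factor, and every component of $R^{T}$ with an argument in the Gaussian factor vanishes; in particular $R$ is constant and $R(\cdot,\nabla^{T}\psi)\nabla^{T}\psi=0$, which is exactly transverse radial flatness.

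For $(ii)\Rightarrow(iii)$: constancy of $R$ gives $Ric^{T}(\nabla^{T}\psi)=0$, and feeding $\nabla^{T}\psi$ into the soliton equation yields $\tfrac{1}{2}\nabla^{T}|\nabla^{T}\psi|^{2}=(2n+2)\nabla^{T}\psi$, so the level sets of $\psi$ foliate the leaf space and $\nabla^{T}\psi$ is a null direction of the radial curvature by $(ii)$. Restricting the soliton equation to a regular level set and invoking the Gauss equation together with the commutation identity then forces the transverse Ricci operator to have its spectrum confined to the interval between its extremal eigenvalues; combined with the quantization from Proposition 1 and the positivity of $R$ proved earlier, this pins the eigenvalues into the admissible range. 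Translating through $Ric=Ric^{T}-2g^{T}$ and $Ric(\xi,\xi)=2n$ converts these transverse bounds into the Sasakian pinching $g\le Ric\le 2ng$.

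The step $(iii)\Rightarrow(i)$ is where I expect the real difficulty. Rewriting the soliton equation as $\nabla^{2}\psi=(2n+2)g^{T}-Ric^{T}$, the pinching in $(iii)$ bounds the transverse Hessian of $\psi$, and the aim is to show the traceless part of the relevant tensor is forced to vanish. I would integrate a Bochner--Weitzenb\"ock identity for $Ric^{T}$ (equivalently for $\nabla^{2}\psi$) against $e^{-\psi}dV^{T}$ over basic representatives, using self-adjointness of $\Delta_{\psi}$ and the commutation identity to rewrite the curvature cross terms; the curvature pinching then makes the weighted integrand nonnegative and its integral zero, so $\nabla^{T}\psi$ generates a transversely parallel distribution and $Ric^{T}$ is parallel in that direction. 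This produces an infinitesimal de Rham splitting of the transverse K\"ahler structure. The delicate point is globalizing this to an honest transverse product $N\times_{\Gamma}\mathbb{C}^{k}$ compatible with the Reeb foliation: one must check that the splitting distribution is holonomy-invariant and basic, control $\psi$ and the geometry at infinity using completeness together with the finiteness of the fundamental group, and identify the split-off flat factor as the transverse Gaussian soliton, thereby recovering transverse rigidity.
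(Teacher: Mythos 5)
Your overall plan (a cycle of implications transplanting Petersen--Wylie) is in the right spirit, but two of your three legs have genuine problems, and the harder one fails as sketched. First, your basic identities drop the Sasakian correction terms. Differentiating and tracing the soliton equation here gives $R_{,i}=2R_{ij}\psi_{,j}-2\psi_{,i}$ (the commutation produces an extra term $-R_{l00i}\psi_{,l}=-\psi_{,i}$ coming from the curvature in the Reeb direction), not your $\nabla^{T}R=2\,Ric^{T}(\nabla^{T}\psi,\cdot)$. Consequently constancy of $R$ yields $Ric(\nabla^{T}\psi)=\nabla^{T}\psi$ --- i.e.\ $\nabla^{T}\psi$ is an eigenvector with eigenvalue $1$, the lower endpoint of the pinching in (iii) --- and \emph{not} $Ric^{T}(\nabla^{T}\psi)=0$; likewise the correct gradient identity is $\tfrac{1}{2}\nabla^{T}|\nabla^{T}\psi|^{2}=(2n-1)\nabla^{T}\psi$ (consistent with $R+|\nabla\psi|^{2}=(4n-2)\psi+C_{1}$), not $(2n+2)\nabla^{T}\psi$. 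Beyond the wrong constants, your (ii)$\Rightarrow$(iii) step contains no actual mechanism: the claim that radial flatness plus the Gauss equation ``confines the spectrum'' of the Ricci operator is asserted rather than derived.

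The fatal gap is your direct leg (iii)$\Rightarrow$(i). The weighted Bochner formula for the Ricci tensor of a Sasaki--Ricci soliton is $\Delta_{B,\psi}R_{il}=4n\left(R_{il}-g_{il}\right)+2R_{pq}R_{iplq}$ (established in Section 6 of the paper), and the cross term $2R_{pq}R_{iplq}$ contracts the \emph{full} transverse curvature against Ricci; the pinching $g\le Ric\le 2ng$ controls the Ricci spectrum but says nothing about sectional curvatures, so your claim that ``the curvature pinching makes the weighted integrand nonnegative'' does not hold. In addition, on a complete noncompact $M$ the integration by parts against $e^{-\psi}dV$ requires cutoffs and growth control on $|Rm^{T}|$ that are unavailable at this stage. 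The paper avoids (iii)$\Rightarrow$(i) entirely and runs the cycle the other way: (i)$\Rightarrow$(iii) is immediate; (iii)$\Rightarrow$(ii) follows from the \emph{pointwise} identity $\tfrac{1}{2}\Delta_{B,\psi}R=\mathrm{tr}\bigl(\left(Ric_{D}-g^{T}\right)\left(2ng^{T}-Ric_{D}\right)\bigr)$, where constant $R$ kills the left side while the pinching makes the trace a sum of nonnegative terms, forcing the eigenvalues of $Ric_{D}$ to be exactly $1$ or $2n$; radial flatness is then the elementary algebraic observation that $R\left(\cdot,\nabla^{T}\psi,\nabla^{T}\psi,\cdot\right)=-\left(\nabla^{T}\right)^{2}_{\nabla^{T}\psi,\cdot}\nabla^{T}\psi$ is simultaneously symmetric and antisymmetric on the eigenframe of the sub-Hessian (whose eigenvalues are the two constants $0$ and $2n-1$). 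Finally, (ii)$\Rightarrow$(i) is handled not by a Bochner splitting but by Qi-Ming Wang's isoparametric theory: the potential estimates make $\psi$ a proper isoparametric function whose focal variety $M_{\min}$ is a compact totally geodesic Einstein submanifold, and Petersen--Wylie's construction then yields the transverse product. If you want to salvage your route, you should prove (ii)$\Rightarrow$(iii) through this isoparametric structure rather than the Gauss equation, and replace the integrated Bochner argument by the paper's pointwise trace identity.
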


On the other hand, under the assumption of constant scalar curvature, there is
another type of criterion related to the rank of Ricci curvature operator.

\begin{theorem}
A complete Sasaki-Ricci soliton $\left(  M,g,X\right)  $ of constant scalar
curvature is Sasaki-Einstein if and only if the rank of the operator $Ric-g$
is constant.
\end{theorem}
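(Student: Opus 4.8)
The plan is to prove the substantial implication, namely that constant scalar curvature together with constant rank of $Ric-g$ forces the soliton to be Sasaki--Einstein; the converse is immediate, since if $(M,g,X)$ is Sasaki--Einstein then $Ric=2ng$, so $Ric-g=(2n-1)g$ has full rank $2n+1$ at every point. So assume throughout that $R$ is constant and that $\mathrm{rank}(Ric-g)$ is constant.

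First I would pass to the transverse K\"{a}hler--Ricci soliton and extract the consequences of constant scalar curvature from the fundamental equations established above. Writing the soliton equation as $Ric^{T}+\mathrm{Hess}^{T}\psi=(2n+2)g^{T}$ and recalling that the transverse scalar curvature is $R^{T}=R+2n$, constancy of $R$ gives $\nabla^{T}R^{T}=2\,Ric^{T}(\nabla^{T}\psi)=0$, so that $\nabla^{T}\psi\in\ker Ric^{T}$, together with $\Delta_{B}\psi=\mathrm{const}$ and the first integral $|\nabla^{T}\psi|^{2}=2(2n+2)\psi+\mathrm{const}$. In particular $\psi$ is transversally isoparametric, which is exactly the structure exploited for Proposition 1 via Qi-Ming Wang's theory, so the regular level sets of $\psi$ carry constant spectral data for the transverse Ricci operator $\mathcal{R}^{T}$. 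Using the Sasakian relation $Ric=Ric^{T}-2g$ on the contact distribution and $Ric(\xi,\xi)=2n$, the operator $Ric-g$ restricts to $\mathcal{R}^{T}-3\,\mathrm{Id}$ on the distribution and to $(2n-1)\,\mathrm{Id}$ on the Reeb line, so its rank is governed entirely by the spectrum of $\mathcal{R}^{T}$.

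Next I would analyze how this spectrum evolves. Differentiating the soliton equation and contracting the second Bianchi identity produces a transport equation of matrix Riccati type for $\mathcal{R}^{T}$ along the transverse gradient flow of $\psi$, schematically
\[
\nabla^{T}_{\nabla^{T}\psi}\mathcal{R}^{T}=\big[\mathrm{Hess}^{T}\psi,\,\mathcal{R}^{T}\big]+\text{(transverse curvature terms)} ,
\]
whose eigenvalues satisfy scalar ODEs along the integral curves and are constant on each level set by the isoparametric structure. The constant-rank hypothesis forbids any eigenvalue of $\mathcal{R}^{T}$ from crossing the critical value along the flow, so the eigenspace decomposition of $\mathcal{R}^{T}$ extends smoothly and flow-invariantly; combined with the first integral and the quantization of $R$ from Proposition 1, I would promote this to a parallel (not merely smooth) splitting of the transverse holomorphic tangent bundle into the kernel of $\mathcal{R}^{T}$, which contains $\nabla^{T}\psi$, and its orthogonal complement.

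By the transverse de Rham theorem such a parallel splitting decomposes the local transverse K\"{a}hler structure into a factor on which $\mathcal{R}^{T}$ is nondegenerate and a complementary flat factor supporting all of $X$. The final step is to rule out the flat factor: on it one would have a full Ricci eigenvalue equal to $-2$, violating the lower bound in the rigidity criterion of Theorem 1, whereas the positivity of $R$ proved above, the completeness of $M$, and the finiteness of the spectrum force this factor to be absent, i.e. $\nabla^{T}\psi\equiv0$ and $Ric^{T}=(2n+2)g^{T}$, which is Sasaki--Einstein (and then $g\le Ric\le2ng$, consistent with Theorem 1). I expect the genuine obstacle to lie precisely in this last passage together with the promotion step before it: upgrading the constant-rank condition to a parallel decomposition requires controlling the off-diagonal Riccati terms globally, and excluding a nontrivial flat transverse factor uses completeness in an essential way rather than the pointwise positivity of the scalar curvature alone.
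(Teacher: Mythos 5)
Your proposal diverges from the paper's argument and, as written, contains errors that break its two load-bearing steps. First, your normalizations are the K\"ahler ones rather than the Sasakian ones used here: equation $\left(\mathrm{iii}\right)$ of (\ref{n1}) reads $R_{,i}=2R_{ij}\psi_{,j}-2\psi_{,i}$, the extra term coming from the contact curvature $R_{l00i}$, so constant scalar curvature places $\nabla^{T}\psi$ in $\ker\left(Ric-g\right)$, equivalently $Ric^{T}\left(\nabla^{T}\psi\right)=3\nabla^{T}\psi$ --- not in $\ker Ric^{T}$ as you assert; likewise the first integral is $R+\left\vert\nabla\psi\right\vert^{2}=\left(4n-2\right)\psi+C_{1}$, with coefficient $4n-2$, not $2\left(2n+2\right)$. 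This is not cosmetic: by Wang's isoparametric theory the Hessian eigenvalue on the normal bundle of $M_{\min}$ is then $2n-1$, so the degenerate directions of $Ric-g$ carry Ricci eigenvalue exactly $1$ (not $-2$ or $0$). Consequently your final step fails outright: a rigid model with a nontrivial Euclidean factor does \emph{not} violate the bound $g\leq Ric\leq2ng$ of Theorem \ref{criteria1}(iii) --- the kernel directions satisfy it by definition --- so the flat factor cannot be excluded by ``a full Ricci eigenvalue equal to $-2$''. In the paper that exclusion is handled separately, by the Corollary to Theorem \ref{criteria1} (transversely rigid if and only if Sasaki--Einstein), which rests on the positivity of the scalar curvature (Theorem \ref{nt}) and the $D$-homothetic transformation.

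Second, the middle of your argument --- the Riccati transport equation, non-crossing of eigenvalues, promotion of the constant-rank splitting to a parallel one, and the transverse de Rham decomposition --- is a program rather than a proof, and you flag the promotion step yourself; nothing you write supplies it. The paper needs none of this machinery: it evaluates the constant rank at the focal variety $M_{\min}=\left[\psi=\min_{M}\psi\right]$ (nonempty by Proposition \ref{prop1}), where $Ric$ is block diagonal with eigenvalues $2n$ and $1$, so that $\mathrm{rank}\left(Ric-g\right)=k=\dim M_{\min}$ everywhere and, by Proposition \ref{Pa1}, $R=\left(2n-1\right)k+\left(2n+1\right)$. Then two purely algebraic global identities --- $tr\left(Ric-g\right)=R-\left(2n+1\right)$ and $\left\vert Ric_{D}\right\vert^{2}=\left(2n+1\right)R-2n\left(4n+1\right)$, the latter from $\left(\mathrm{v}\right)$ of (\ref{n1}) with $R$ constant --- combine to give $\sum_{j=1}^{k}\left(R_{j}-2n\right)^{2}=0$, hence $g\leq Ric\leq2ng$, and Theorem \ref{criteria1}(iii) concludes. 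If you wish to salvage your route, the missing idea is precisely this evaluation at $M_{\min}$: it is the only place where the constant-rank hypothesis is converted into spectral information, and it replaces your entire parallel-splitting apparatus.
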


As an imperative application, we derive the triviality of low-dimensional
Sasaki-Ricci solitons when the scalar curvature is constant.

\begin{corollary}
\label{CSC}Any complete Sasaki-Ricci soliton of dimension at most seven with
constant scalar curvature must be Sasaki-Einstein.
\end{corollary}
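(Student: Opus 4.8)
The plan is to derive the Corollary from Theorem 3 by showing that, under the hypotheses, constant scalar curvature already forces the rank of the operator $Ric-g$ to be constant; Theorem 3 then yields the Sasaki-Einstein conclusion. Since the underlying dimension satisfies $2n+1\le 7$, only $n=1,2,3$ occur. The case $n=1$ (dimension three) is the fact recorded just before Proposition 1, obtained from Theorem 4.9 of \cite{cc09} and Theorem 1 of \cite{be01} together with the positivity of $R$; so the real content lies in $n=2,3$. In each of these cases I would aim to reduce the whole problem to the single assertion that $\mathrm{rank}\,(Ric-g)$ is globally constant.

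First I would analyze the pointwise eigenvalue structure of the Ricci operator. Rewriting the soliton equation as $Ric^{T}=(2n+2)g^{T}-\mathrm{Hess}^{T}\psi$ and using the standard Sasakian relation between $Ric$ and $Ric^{T}$ on the contact distribution (with $Ric(\xi,\xi)=2n$ along the characteristic field $\xi$), constant scalar curvature makes $\psi$ transversely isoparametric in the sense of Wang \cite{wa87}, exactly as in the proof of Proposition 1. This forces $\nabla^{T}\psi$ to be an eigendirection and, on the open dense regular set of $\psi$, pins the eigenvalues of $Ric$ to the two values $1$ and $2n$; equivalently $Ric-g$ has eigenvalues $0$ and $2n-1$, with $\xi$ lying in the top eigenspace. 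Because the transverse geometry is K\"{a}hler, every Ricci eigenspace is invariant under the transverse complex structure $J$, so the horizontal multiplicities are even. Writing $k$ for the multiplicity of the eigenvalue $2n$, one finds $k$ odd and $R=(2n-1)k+(2n+1)$, matching Proposition 1, and $\mathrm{rank}\,(Ric-g)=k$ on the regular set.

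Next I would promote this to a global statement. Since the only eigenvalues present are $1$ and $2n$, the relations $p+k=2n+1$ and $p+2nk=R$ determine the multiplicities $p,k$ from the constant $R$ alone; by continuity the eigenvalues remain in $\{1,2n\}$ across the focal set $\{\nabla^{T}\psi=0\}$, so the multiplicities, and hence $\mathrm{rank}\,(Ric-g)=k$, cannot jump. This is precisely where the dimension restriction enters: establishing that \emph{only} the two eigenvalues $1,2n$ occur, with no additional principal values and no hidden degeneration, is what fails in high dimensions but can be secured for $n=2,3$. For these $n$, Proposition 1 together with the $J$-invariance (evenness) of the horizontal multiplicities leaves only the finitely many candidate profiles $k\in\{1,3,\dots,2n+1\}$, and each non-Einstein profile can be tested directly using completeness and the transversely isoparametric structure of $\psi$. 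Once the rigid two-value structure is verified, $\mathrm{rank}\,(Ric-g)=k$ is constant, and Theorem 3 forces $k=2n+1$, i.e.\ the soliton is Sasaki-Einstein.

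The hard part is exactly the middle step: ruling out extra eigenvalues and excluding the intermediate multiplicity profiles $k\in\{1,3,\dots,2n-1\}$, so that the global two-value structure (hence constant rank) genuinely holds. This is the Sasakian analogue of the delicate elimination of intermediate scalar-curvature values carried out by Cheng and Zhou \cite{cz23} in the four-dimensional Riemannian setting, and the bound $2n\le 6$ is what keeps the list of $J$-invariant profiles short enough to be dispatched case by case; beyond dimension seven the number of admissible configurations grows and the present method no longer closes the argument.
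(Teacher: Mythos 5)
Your overall target---reducing the Corollary to constancy of $\mathrm{rank}\left(Ric-g\right)$ and invoking the rank criterion (Theorem \ref{criteria2})---is the paper's route, and several of your observations match it: $\xi$ and $\nabla^{T}\psi$ are Ricci eigendirections with eigenvalues $2n$ and $1$ (the latter from $R_{,i}=2R_{ij}\psi_{,j}-2\psi_{,i}$ with $R$ constant), and $J$-invariance of $Ric_{D}$ forces even horizontal multiplicities, which is exactly how the dimension bound $2n+1\leq7$ enters. But your middle step has a genuine gap, in fact two. First, Wang's isoparametric theory does \emph{not} pin the eigenvalues of $Ric$ to the two values $1$ and $2n$ on the regular set of $\psi$: his structure theorem yields the Hessian dichotomy (eigenvalues $0$ and $2n-1$) only on the focal variety $M_{\min}$, which is precisely how Proposition \ref{Pa1} is proved; away from the focal set, the level hypersurfaces of $\psi$ being isoparametric imposes no such two-value constraint on $Ric$. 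The global two-value structure is essentially equivalent to the conclusion you want, so asserting it on a dense set begs the question. Second, the step you yourself flag as ``the hard part''---ruling out extra eigenvalues and ``testing directly'' the intermediate profiles---is never actually carried out and no mechanism is proposed; a Cheng--Zhou-type case elimination is neither performed by you nor needed by the paper.

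The ingredient you are missing is equality $\left(\mathrm{v}\right)$ of (\ref{n1}), $\Delta_{B}R^{T}+2\left\vert Ric_{D}\right\vert^{2}-2\left(2n+1\right)R+4n\left(4n+1\right)=g\left(\nabla R^{T},\nabla\psi\right)$: when $R$ is constant this makes $\left\vert Ric_{D}\right\vert^{2}$ a known constant, supplying a second symmetric-function identity (\ref{a7}) alongside the trace identity (\ref{a6}). With only the trace constraint, as in your write-up, two unknown eigenvalues cannot be rigidified, which is why you were driven to the unproven pinning claim. In dimension at most seven the even horizontal multiplicities leave at most two unknown distinct eigenvalues $R_{1},R_{2}$ besides $1$ and $2n$; the resulting two-by-two system, combined with continuity of the eigenvalues (which forces the multiplicities $k_{1},k_{2}$ to be constant), makes $R_{1}$ and $R_{2}$ constant, hence $\mathrm{rank}\left(Ric-g\right)$ constant---with no case-by-case exclusion whatsoever, since Theorem \ref{criteria2} shows that \emph{any} constant-rank profile already forces $\sum_{j=1}^{k}\left(R_{j}-2n\right)^{2}=0$, placing the eigenvalues in $\left\{1,2n\right\}$ and concluding via criterion (iii) of Theorem \ref{criteria1}; the extreme values $k=1,2n,2n+1$ are handled separately by the proposition preceding the Corollary. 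Your intermediate-profile elimination is thus a red herring, and as written your argument does not close.
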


Combing with the result \cite{bre10} of S. Brendle, we are able to obtain the following:

\begin{corollary}
Any complete Sasaki-Ricci soliton of dimension five or seven with constant
scalar curvature and positive isotropic curvature (PIC) must be a finite
quotient of the unit sphere. Furthermore, if it is of nonnegative isotropic
curvature instead of PIC, then it is locally symmetric.
\end{corollary}

\begin{remark}
In 2009, Futaki, Ono and Wang \cite{fow09} demonstrated that there is a
Sasaki-Ricci soltion on any compact transverse Fano toric Sasakian manifold
with vanishing first Chern class of the contact bundle; moreover, it carries a
Sasaki-Einstein metric if and only if the Sasaki-Futaki invariant equals zero.
For a compact toric Sasakian manifold, their conclusion says that any
Sasaki-Ricci soltion with constant scalar curvature admits a Sasaki-Einstein
metric. So our corollary could be regarded as a generalization of such result
in low-dimensional cases.
\end{remark}

Last, we consider the rigidity of Sasaki-Ricci solitons under the assumption
of harmonic Weyl tensor.

\begin{theorem}
\label{harWT}If $\left(  M^{2n+1},g,X\right)  $ be a complete Sasaki-Ricci
soliton with the Hamiltonian potential $\psi$ with respect to $X$ with
harmonic Weyl tensor, then it is a finite quotient of the sphere $S^{2n+1}$.
\end{theorem}

This paper is organized as follows. In Section 2, we review some preliminaries
in Sasakian geometry. In Section 3, we establish a fundamental estimate for
the Hamiltonian potential of a complete noncompact Sasaki-Ricci soliton, under
the assumption that the scalar curvature is nonnegative. In Section 4, we
verify that this assumption indeed holds, namely, that the scalar curvature of
any complete noncompact Sasaki-Ricci soliton is nonnegative, and in fact
strictly positive by the maximum principle. Section 5 is devoted to the study
of the possible values of the scalar curvature of a Sasaki-Ricci soliton,
along with two criteria for transverse rigidity and several applications.
Finally, in the last section, we characterize Sasaki-Ricci solitons with
harmonic Weyl tensor.

\textbf{Acknowledgements.} Part of this project was carried out during the
third named author's visit to the NCTS to whom he would like to express the
gratitude for the warm hospitality.

\section{Preliminary}

In this section, we will review some fundamental notion for Sasakian
manifolds. The reader is referred to \cite{bg08}, \cite{bl10}, \cite{fow09},
\cite{zha11} and the references therein for more details. A Riemannian
$(2n+1)$-manifold $(M,g)$ is said a Sasakian manifold if\ the metric cone
$(C(M),\overline{g},\overline{J})\triangleq(\mathbb{R}^{+}\times
M\mathbf{,\ }dr^{2}+r^{2}g,\overline{J})$ is K\"{a}hler. Such manifold is
canonically endowed with the so-called Sasakian structure (or normal contact
metric structure originally adopted by Sasaki and Hatakeyama) $\left(
\phi,\xi,\eta,g\right)  $ which is defined by%
\[
\phi \doteqdot \left \{
\begin{array}
[c]{ll}%
\overline{J} & \mathrm{on}\text{ }D\\
0 & \mathrm{on}\text{ }L_{\xi}%
\end{array}
\right.  ,\text{ }\xi \doteqdot \left(  \overline{J}\frac{\partial}{\partial
r}\right)  |_{M},\text{ }\mathrm{and}\text{ }\eta \doteqdot g\left(  \cdot
,\xi \right)  .
\]
Here $D$ and $L_{\xi}$ denote the contact bundle $\ker \eta$ and the trivial
real line bundle induced by the Reeb vector field $\xi$ respectively. In this
case, the Sasakian manifold $M$ is viewed as the section $\left[  r=1\right]
$ of the K\"{a}hler cone $C(M)$. Conversely, we could also recover the
K\"{a}hler structure of the metric cone $(C(M),\overline{g},\overline{J})$
from the Sasakian structure $\left(  \phi,\xi,\eta,g\right)  $; actually,
there is a one-to-one correspondence between them (cf. \cite{bg08} and
\cite{bl10}). It is clear that any Sasakian manifold $M$ carries a foliated
structure $\mathcal{F}=$ $\mathcal{F}_{\xi}$ with integrable line bundle
$L_{\xi}$, usually called the Reeb foliation. Therefore, the transverse
geometry which comes from the foliation theory plays a significant role in the
study of Sasakian manifolds. In this section, we always assume $M$ is a
Sasakian manifold. There exists an exact sequence of vector bundles over $M$
\[
0\longrightarrow L_{\xi}\longrightarrow TM\overset{\pi}{\longrightarrow
}v\left(  \mathcal{F}\right)  \longrightarrow0
\]
where the quotient bundle $v\left(  \mathcal{F}\right)  \doteqdot$ $TM/L_{\xi
}$ is said the normal bundle of the Reeb foliation. With the help of the
orthogonal decomposition $TM=$ $D\oplus L_{\xi}$ with respect to the Sasakian
metric $g$, we are able to identify the normal bundle $v\left(  \mathcal{F}%
\right)  $ with the contact bundle $D$ by
\[%
\begin{array}
[c]{clll}%
\sigma: & v\left(  \mathcal{F}\right)  & \longrightarrow & D\\
& Y+L_{\xi} & \longmapsto & Y-\eta \left(  Y\right)  \xi
\end{array}
.
\]

By this identification $v\left(  \mathcal{F}\right)  \simeq D$, one can define
the transverse Levi-Civita connection (or Bott connection) $\nabla^{T}$ on the
normal bundle $v\left(  \mathcal{F}\right)  $ over $M$ by%
\[
\nabla_{X}^{T}\overline{Y}\doteqdot \left \{
\begin{array}
[c]{ll}%
\pi \left(  \nabla_{X}\sigma \left(  \overline{Y}\right)  \right)  &
\mathrm{if}\text{ }X\in \Gamma \left(  D\right) \\
\pi \left(  \left[  \xi,\sigma \left(  \overline{Y}\right)  \right]  \right)  &
\mathrm{if}\text{ }X=\xi
\end{array}
\right.
\]
where $\overline{Y}$ is meant to be the class $Y+L_{\xi}$ and $\nabla$ is the
Levi-Civita connection of $(M,g)$. As in the Riemannian case, it could be
asserted that the transverse Levi-Civita connection is torsion-free and
compatible with the transverse metric $g^{T}\doteqdot \frac{1}{2}d\eta \left(
\cdot,\phi \left(  \cdot \right)  \right)  $ by recognizing $v\left(
\mathcal{F}\right)  $ as $D$. Moreover, this leads to the transverse
Riemannian curvature operator $Rm^{T}$ and a variety of transverse curvature
tensors as well as in the usual sense.

\begin{remark}
As mentioned in \cite{zha11}, it is worth to note that, under the deformations
of Sasakian structure, the contact bundle $D$ would vary while the normal
bundle $v\left(  \mathcal{F}\right)  $ stays the same. Nonetheless, because
the calculations in the content do not be involved with the deformations of
Sasakian structure, the contact bundle $D$ shall be identified with the normal
bundle $v\left(  \mathcal{F}\right)  $ in the paper.
\end{remark}

In virtue of the complex Reeb foliation induced by the corresponding
K\"{a}hler cone, it reveals that a Sasakian manifold carries the transverse
complex geometric structure. We shall present some basic materials about it below.

\begin{definition}
An endomorphism $J$ of the normal bundle $v\left(  \mathcal{F}\right)  $ is
called a transverse almost complex structure if
\[
J^{2}=-Id_{v\left(  \mathcal{F}\right)  }.
\]

\end{definition}

As aforementioned, the K\"{a}hler cone of a Sasakian manifold $M^{2n+1}$ is
endowed with the complex Reeb foliation defined by the holomorphic vector
field $\left(  \frac{\partial}{\partial r}-i\overline{J}\frac{\partial
}{\partial r}\right)  $ on $C(M)$. It means that there is a complex foliated
atlas $\left \{  \left(  \widetilde{U}_{\alpha},\widetilde{\phi}_{\alpha
}=\left(  z_{j}^{\alpha}\right)  _{j=1}^{n+1}\right)  \right \}  _{\alpha
\in \Lambda}$ on $C(M)$ such that the transition functions
\[
\widetilde{\tau}_{\beta \alpha}:\widetilde{f}_{\alpha}\left(  \widetilde
{U}_{\alpha \beta}\right)  \longrightarrow \widetilde{f}_{\beta}\left(
\widetilde{U}_{\alpha \beta}\right)
\]
of the local submersions $\widetilde{f}_{\alpha}\doteqdot p_{2}\circ
\widetilde{\phi}_{\alpha}$ and $\widetilde{f}_{\beta}\doteqdot p_{2}%
\circ \widetilde{\phi}_{\beta}$ on the overlap of foliated charts
$\widetilde{U}_{\alpha \beta}\doteqdot \widetilde{U}_{\alpha}\cap \widetilde
{U}_{\beta}$ are biholomorphic where $p_{2}:%
\mathbb{C}
\times%
\mathbb{C}
^{n}\longrightarrow%
\mathbb{C}
^{n}$ denotes the projection onto the second factor. Therefore, we have an
underlying transverse holomorphic structure on $\mathcal{F}$, i.e. given an
open covering $\left \{  U_{\alpha}\right \}  _{\alpha}$ and local submersions
$f_{\alpha}:U_{\alpha}\longrightarrow%
\mathbb{C}
^{n}$ which are induced from the Reeb foliation $\mathcal{F}$, there exist
biholomorphisms $\tau_{\beta \alpha}:f_{\alpha}\left(  U_{\alpha \beta}\right)
\longrightarrow f_{\beta}\left(  U_{\alpha \beta}\right)  $ for all indices
$\alpha$,$\beta$ by taking $U_{\alpha}\doteqdot \widetilde{U}_{\alpha}\cap M$,
$f_{\alpha}\doteqdot \widetilde{f}_{\alpha}|_{M}:U_{\alpha}\longrightarrow%
\mathbb{C}
^{n}$ and $\tau_{\beta \alpha}=\widetilde{\tau}_{\beta \alpha}$. It could be
found that this transverse holomorphic structure naturally defines a
transverse almost complex structure $J$ on $v\left(  \mathcal{F}\right)  $ and
then the complexification $v\left(  \mathcal{F}\right)  ^{%
\mathbb{C}
}\doteqdot v\left(  \mathcal{F}\right)  \otimes_{%
\mathbb{R}
}%
\mathbb{C}
$ is decomposed as%
\[
v\left(  \mathcal{F}\right)  ^{%
\mathbb{C}
}=v\left(  \mathcal{F}\right)  ^{1,0}\oplus v\left(  \mathcal{F}\right)
^{0,1}%
\]
where $v\left(  \mathcal{F}\right)  ^{1,0}$ and $v\left(  \mathcal{F}\right)
^{0,1}$ are the eigenspaces of $J$ corresponding to the eigenvalues $i$ and
$-i$ respectively. We say that a vector field $X$ on $M$ is foliate if
$L_{\xi}$ is preserved by $X$.

\begin{definition}
\bigskip A foliated vector field $X$ on $M$ is called a transverse (real)
holomorphic vector field if%
\[
\left[  \overline{X},J\overline{Y}\right]  =J\overline{\left[  X,Y\right]  }%
\]
for any vector field $Y$ on $M$.
\end{definition}

As usual, these concepts could be easily generalized to the complex version.
For example, a transverse complex holomorphic vector field $Z$ is meant to be
a foliated complex vector field which satisfies the last identity and the
class $Z+%
\mathbb{C}
L_{\xi}\in v\left(  \mathcal{F}\right)  ^{1,0}$.\ Now, in terms of these
terminologies, we are able to introduce the Hamiltonian holomorphic vector
fields subsequently.

\begin{definition}
A transverse complex holomorphic vector field $X$ is called a Hamiltonian
holomorphic vector field if there is a complex-valued basic function $\psi$
such that
\begin{equation}
\left \{
\begin{array}
[c]{l}%
\psi=i\eta \left(  X\right) \\
\iota_{X}\omega^{T}=i\overline{\partial}_{B}\psi
\end{array}
\right.  . \label{pr1}%
\end{equation}
Here $\iota_{X}$ denotes the contraction with $X$ and $\omega^{T}=\frac{1}%
{2}d\eta=g^{T}\left(  \phi \left(  \cdot \right)  ,\cdot \right)  $.
\end{definition}

\begin{remark}
Given a complex-valued basic function $\psi$, it is apparent to see that there
is a unique Hamiltonian holomorphic vector field $X$ which fulfills the
conditions (\ref{pr1}).
\end{remark}

Subsequently, we give the definition of the Sasaki-Ricci solitons (someone
calls it the transverse K\"{a}hler-Ricci solitons).

\begin{definition}
The triple $\left(  M,g,X\right)  $ is called a Sasaki-Ricci soliton (with the
Hamiltonian potential $\psi$) with respect to the Hamiltonian holomorphic
vector field $X$ if it satisfies%
\[
Ric^{T}+\frac{1}{2}L_{X}g^{T}-\left(  2n+2\right)  g^{T}=0
\]
where $X$ is the Hamiltonian holomorphic vector field satisfying (\ref{pr1}).
\end{definition}

By the way, one can consider more general definition for Sasaki-Ricci
solitons, that means the coefficient of $g^{T}$ is allowed to be any real
number as usual; however, in this paper, Sasaki-Ricci solitons are regarded as
the generalization of Sasaki-Einstein metrics. Last, the transverse rigidity
of the Sasaki-Ricci solitons is described as follows.

\begin{definition}
A Sasaki-Ricci soliton $\left(  M^{2n+1},g,X\right)  $ (with the Hamiltonian
potential $\psi$) with respect to the Hamiltonian holomorphic vector field $X$
is said to be transversely rigid if it is isometric to a quotient $N\times_{G}%
\mathbb{C}
^{k}$ of the product of a Sasaki $\eta$-Einstein manifold $N$ and a complex
Euclidean space $%
\mathbb{C}
^{k}$ where $\psi=n\left \vert z\right \vert ^{2}$ for $z\in%
\mathbb{C}
^{k}$. Here the finite group $G$ acts on $N$ freely and on $%
\mathbb{C}
^{k}$ by unitary transformation.
\end{definition}

\section{Potential Estimates of the Sasaki-Ricci Solitons}

Before establishing the nonnegativity, and in fact the positivity, of the
scalar curvature for Sasaki-Ricci solitons in the next section, it is natural
to first present the Sasaki analogue of the potential function estimates as in
\cite{cd10} (or \cite{hm11}).

\begin{lemma}
\label{potential estimate}If $\left(  M^{2n+1},g,X\right)  $ is a complete
noncompact Sasaki-Ricci soliton with nonnegative scalar curvature and the
corresponding Hamiltonian function (potential) $\psi$ with respect to $X$,
then there is a constant $C_{1}$ such that
\[
R+\left \vert \nabla \psi \right \vert ^{2}=\left(  4n-2\right)  \psi+C_{1};
\]
specifically, we have
\[
\left \vert \nabla \psi \right \vert \leq \sqrt{4n-2}\sqrt{\psi+C_{2}}%
\]
where $C_{2}=\frac{C_{1}}{4n-2}$.
\end{lemma}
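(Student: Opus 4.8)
The plan is to reproduce, in the transverse Kähler geometry of the Reeb foliation, Hamilton's conservation law $R+|\nabla f|^{2}-2\lambda f=\mathrm{const}$ for gradient shrinking Ricci solitons, and then to read off the gradient bound from the hypothesis $R\geq 0$. First I would rewrite the defining equation $Ric^{T}+\frac{1}{2}L_{X}g^{T}-(2n+2)g^{T}=0$ as an identity on the whole of $M$ by invoking Lemma \ref{lemma1} with $\lambda=-(2n+2)$. Then $\lambda+2=-2n$ and $\lambda+2n+2=0$, so the $\eta\otimes\eta$ term disappears and one is left with the clean relation $Ric+\frac{1}{2}L_{X}g^{T}=2ng$ on $M$. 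Tracing this against $g$, and using that $X$ is foliate (so $L_{X}g^{T}$ annihilates $\xi$ and $\mathrm{tr}_{g}L_{X}g^{T}=\mathrm{tr}_{g^{T}}L_{X}g^{T}=2\,\mathrm{div}^{T}X$), produces a first scalar relation tying $R$ to $\mathrm{div}^{T}X$, equivalently to $\Delta_{B}\psi$; the normalization $\mathrm{div}^{T}X=2\Delta_{B}\psi$ is the one already exhibited in Remark \ref{remark1}(5).

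Next I would set up the Sasakian analogue of the soliton identity $\nabla_{i}R=2R_{ij}\nabla_{j}f$. Since $\psi$ is basic, $\nabla\psi$ is horizontal and coincides with the transverse gradient $\nabla^{T}\psi$, and on the contact bundle $D$ the Hamiltonian condition (\ref{pr1}) identifies $\frac{1}{2}L_{X}g^{T}$ with the transverse Hessian of $\psi$. I would then apply the transverse contracted second Bianchi identity together with the commutation (Bochner) formula to this equation. The one genuinely Sasakian feature is that the full Hessian of $\psi$ and its transverse Hessian agree on $D\times D$ but differ in the mixed directions pairing $D$ with $\xi$, the discrepancy being governed by $\nabla_{X}\xi=-\phi X$ and $\phi^{2}=-\mathrm{Id}+\eta\otimes\xi$; these correction terms have to be carried through rather than discarded.

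With both identities available I would differentiate $R+|\nabla\psi|^{2}$ directly: writing $\nabla_{k}|\nabla\psi|^{2}=2(\nabla_{k}\nabla_{j}\psi)\nabla_{j}\psi$, substituting the soliton equation for $\nabla_{k}\nabla_{j}\psi$, and using the Bianchi-type identity to cancel the $Ric(\nabla\psi)$ contributions, the pure curvature terms cancel and only a multiple of $d\psi$ survives. I expect this bookkeeping to be the crux of the argument: it is exactly the Reeb and $\phi$ correction terms, fed by the contact structure and by the fact that the soliton field is merely Hamiltonian holomorphic rather than a genuine gradient, that must be shown to recombine into precisely the coefficient $4n-2$ (and not, say, the naive transverse value one would guess by ignoring them). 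Granting this, $R+|\nabla\psi|^{2}-(4n-2)\psi$ has vanishing differential, hence is constant on the connected manifold $M$; call the constant $C_{1}$.

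The pointwise estimate is then immediate. Because the scalar curvature is assumed nonnegative, discarding $R\geq 0$ from the conservation law gives $|\nabla\psi|^{2}\leq(4n-2)\psi+C_{1}=(4n-2)(\psi+C_{2})$ with $C_{2}=\frac{C_{1}}{4n-2}$; in particular $\psi+C_{2}\geq 0$ everywhere, so taking square roots yields $|\nabla\psi|\leq\sqrt{4n-2}\,\sqrt{\psi+C_{2}}$, as claimed.
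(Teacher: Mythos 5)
Your outline is essentially the paper's own argument: restrict the soliton equation to the contact bundle to get $R_{jk}=2ng_{jk}-\psi_{,jk}$, derive the Sasakian analogue of Hamilton's identity $\nabla R=2\,Ric(\nabla f)$, differentiate $R+\vert\nabla\psi\vert^{2}-(4n-2)\psi$, and discard $R\geq 0$ at the end. The one step you explicitly defer (``granting this'') is precisely the paper's equality $(\mathrm{iii})$ of (\ref{n1}), $R_{,i}=2R_{ij}\psi_{,j}-2\psi_{,i}$, and it is a three-line computation: by the contracted Bianchi identity $\frac{1}{2}R_{,i}=R_{,i}-R_{ij,j}=-\psi_{,jji}+\psi_{,jij}=R_{ljji}\psi_{,l}$, and since the frame sum over $j$ runs only over horizontal directions, $R_{ljji}=R_{li}-R_{l00i}=R_{li}-g_{li}$ by the Sasakian identity $R(e_{l},\xi)\xi=e_{l}$. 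So your prediction is confirmed, though the correction enters through this horizontal-trace curvature term rather than through the mixed components of the Hessian as you suggest; the paper's bookkeeping then gives $(\vert\nabla\psi\vert^{2})_{,i}=2(2ng_{ij}-R_{ij})\psi_{,j}=(4n-2)\psi_{,i}-R_{,i}$, i.e.\ exactly the coefficient $4n-2$. Two small patches are needed in your write-up: the computation only kills the horizontal differential, so ``vanishing differential hence constant'' requires the additional observation, made explicitly in the paper, that $R$ and $\psi$ are both basic, which disposes of the $\xi$-direction; and your side normalization $\mathrm{div}^{T}X=2\Delta_{B}\psi$ is off by a factor of two relative to the identification $\frac{1}{2}(L_{X}g^{T})_{jk}=\psi_{,jk}$ that you (correctly) use afterwards, whose trace is the paper's equality $(\mathrm{ii})$ of (\ref{n1}), $R+\Delta_{B}\psi=4n^{2}+2n$. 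With these repairs your argument coincides with the paper's proof.
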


\begin{proof}
Let $\left \{  e_{j}\right \}  _{j=1}^{2n}$ be a local orthonormal frame in
$\Gamma \left(  D\right)  $ and $e_{0}=\xi$. The Sasaki-Ricci soliton reads
\[
R_{jk}^{T}-\left(  2n+2\right)  g_{jk}=-\psi_{jk}.
\]
From the equality $Ric^{T}=Ric+2g$ on $\Gamma \left(  D\right)  $, we see that
\[
R_{jk}=2ng_{jk}-\psi_{jk}%
\]
and
\[
R=R_{jj}+R_{00}=4n^{2}-\Delta_{B}\psi+2n.
\]
By the calculation%
\[%
\begin{array}
[c]{ccl}%
\frac{1}{2}R_{,i} & = & R_{,i}-R_{ij,j}\\
& = & -\psi_{,jji}+\psi_{,jij}\\
& = & R_{ljji}\psi_{,l}\\
& = & \left(  R_{li}-R_{l00i}\right)  \psi_{,l}\\
& = & R_{ij}\psi_{,j}-\psi_{,i}%
\end{array}
,
\]
we have
\[
R_{,i}=2R_{ij}\psi_{,j}-2\psi_{,i}.
\]
Hence the equality
\[
\left(  R+\left \vert \nabla \psi \right \vert ^{2}-\left(  4n-2\right)
\psi \right)  _{,i}=0
\]
holds. With the help of this and that the scalar curvature $R$ and Hamiltonian
function $\psi$ are both basic, there is a constant $C_{1}$ such that
\[
R+\left \vert \nabla \psi \right \vert ^{2}-\left(  4n-2\right)  \psi=C_{1}.
\]
From the hypothesis of nonnegativity of the scalar curvature, we get
\[
\left \vert \nabla \psi \right \vert \leq \sqrt{4n-2}\sqrt{\psi+C_{2}}.
\]

\end{proof}

\begin{proposition}
\label{prop1}Under the same assumptions as precedes, we have the potential
estimate%
\[
n\left(  d\left(  x,y\right)  -7\right)  _{+}^{2}\leq \psi \left(  x\right)
+C_{2}\leq n\left(  d\left(  x,y\right)  +\sqrt{3}\right)  ^{2}%
\]
where $y$ is a minimum point of $\psi$.
\end{proposition}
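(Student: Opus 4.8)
The plan is to run the Cao--Zhou potential growth estimate (as in \cite{cd10}, \cite{hm11}) in its transverse Sasakian form, splitting the two-sided bound into an upper estimate got by integrating the gradient bound of the previous Lemma and a lower estimate got by integrating the soliton Hessian identity along a minimizing geodesic from the minimum point $y$. Throughout I would set $\rho=\sqrt{\psi+C_{2}}$, which is legitimate since the inequality $|\nabla\psi|\le\sqrt{4n-2}\sqrt{\psi+C_{2}}$ already forces $\psi+C_{2}\ge0$. For the upper bound I would first record two elementary structural facts: $\tfrac{\sqrt{4n-2}}{2}\le\sqrt n$ (just $4n-2\le 4n$), and $\psi(y)+C_{2}\le 3n$. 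The latter uses that at the minimum one has $\nabla\psi(y)=0$, so the Lemma gives $\psi(y)+C_{2}=R(y)/(4n-2)$, while $\mathrm{Hess}\,\psi(y)\ge0$ yields $\Delta_{B}\psi(y)\ge0$ and hence, from $R=4n^{2}+2n-\Delta_{B}\psi$, the bound $R(y)\le 2n(2n+1)$; therefore $\psi(y)+C_{2}\le \tfrac{n(2n+1)}{2n-1}\le 3n$ for $n\ge1$. Since $|\nabla\rho|=\tfrac{|\nabla\psi|}{2\rho}\le\tfrac{\sqrt{4n-2}}{2}\le\sqrt n$, integrating $\rho$ along a minimizing geodesic from $y$ to $x$ gives $\rho(x)\le\rho(y)+\sqrt n\,d(x,y)\le\sqrt{3n}+\sqrt n\,d(x,y)=\sqrt n\bigl(d(x,y)+\sqrt3\bigr)$, which squares to the asserted upper bound.

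For the lower bound I would fix a unit-speed minimizing geodesic $\gamma:[0,r]\to M$ with $\gamma(0)=y$, $\gamma(r)=x$, $r=d(x,y)$, and study $u(s)=\psi(\gamma(s))$. Writing $\dot\gamma=T+a\xi$ with $T\in\Gamma(D)$ and $a=\eta(\dot\gamma)$, two Sasakian facts are central: first, $a$ is constant along $\gamma$ (the contact angle of a geodesic is conserved, since $\tfrac{d}{ds}g(\xi,\dot\gamma)=g(\nabla_{\dot\gamma}\xi,\dot\gamma)=\pm g(\phi\dot\gamma,\dot\gamma)=0$ by skewness of $\phi$); second, $\psi$ is basic, so $\mathrm{Hess}\,\psi$ restricted to $D$ agrees with the transverse Hessian and $\mathrm{Hess}\,\psi(\xi,\xi)=0$. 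Using $\mathrm{Hess}\,\psi|_{D}=2n\,g-Ric$ (the component form $R_{jk}=2n g_{jk}-\psi_{jk}$), the identity $Ric(\cdot,\xi)=2n\eta$, and the fact that $\phi\nabla^{T}\psi$ is transverse Killing (because $X$ is Hamiltonian holomorphic), I would derive a second-order identity of the form $u''(s)=2n-Ric(\dot\gamma,\dot\gamma)(s)-4a^{2}\bigl(u(s)-u(0)\bigr)$, in which the last term is the genuinely new Reeb-direction contribution, absent in the Riemannian and Kähler settings. Since $u'(0)=0$, integrating this and using $u'(r)=\langle\nabla\psi(x),T(r)\rangle\le|\nabla\psi(x)|\le\sqrt{4n-2}\,\rho(x)$ reduces the lower bound to an upper control of $\int_{0}^{r}Ric(\dot\gamma,\dot\gamma)\,ds$ together with the contact term.

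The curvature integral I would control by the index form inequality for the minimizing geodesic: summing the second variation of arc length over a parallel orthonormal frame of the $2n$-dimensional normal bundle $\dot\gamma^{\perp}$ and inserting a cut-off $\varphi$ with $\varphi(0)=\varphi(r)=0$ gives $\int_{0}^{r}\varphi^{2}Ric(\dot\gamma,\dot\gamma)\,ds\le 2n\int_{0}^{r}(\varphi')^{2}\,ds$, from which a plateau choice of $\varphi$ bounds $\int_{0}^{r}Ric(\dot\gamma,\dot\gamma)\,ds$ up to a universal constant plus endpoint terms controlled by $R\ge0$ and the fixed local geometry near $y$; combined with the bound on $u'(r)$ this should yield $\rho(x)\ge\sqrt n\,\bigl(d(x,y)-7\bigr)$, hence the lower bound once $d(x,y)\ge7$. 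The main obstacle I anticipate is precisely the interplay between $\int Ric$ and the Reeb cross term $-4a^{2}(u-u(0))$: unlike the Kähler--Ricci case, the soliton equation closes only in the transverse directions, so a minimizing geodesic generically carries a nonzero conserved Reeb component $a$, and one must show this term cannot destroy the linear growth of $\rho$. I would handle it by exploiting that the Reeb flow acts by $\psi$-preserving isometries, so that progress along the foliation direction is bounded and the transverse quadratic growth is isolated; making this quantitative, so as to recover the clean dimension-independent constant $7$, is where the real work lies.
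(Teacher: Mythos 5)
Your upper bound is complete and is essentially the paper's own argument: the Lipschitz bound $\left\vert \nabla\sqrt{\psi+C_{2}}\right\vert \leq\frac{\sqrt{4n-2}}{2}\leq\sqrt{n}$ integrated along a minimizing geodesic, together with the estimate at the minimum point, $0\leq R\left(  y\right)  =\left(  4n-2\right)  \psi\left(  y\right)  +C_{1}\leq4n^{2}+2n$, hence $\psi\left(  y\right)  +C_{2}\leq\frac{n\left(  2n+1\right)  }{2n-1}\leq3n$, which you derive exactly as the paper does from $\nabla\psi\left(  y\right)  =0$ and $\Delta_{B}\psi\left(  y\right)  \geq0$ via $\left(  \mathrm{ii}\right)  $ and $\left(  \mathrm{iv}\right)  $ of (\ref{n1}).

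The lower bound, however, is a genuine gap: you set up the correct scheme (second variation of arc length with a plateau cutoff against the soliton Hessian identity along a minimizing geodesic from $y$, which is precisely the paper's scheme), but you stop at the Reeb cross term, conceding that making its control quantitative "is where the real work lies." An announced obstacle is not a proof, and the term cannot simply be discarded: writing $u=\psi\circ\gamma$ and $a=\eta\left(  \gamma^{\prime}\right)  $, your identity $u^{\prime\prime}=2n-Ric\left(  \gamma^{\prime},\gamma^{\prime}\right)  -4a^{2}\left(  u-u\left(  0\right)  \right)  $ puts the correction on the unfavorable side — since $u\geq u\left(  0\right)  $, it subtracts a nonnegative quantity (a priori of size up to $a^{2}s_{0}^{3}$ by your own upper bound) from the linear-in-$s_{0}$ lower bound that the second variation inequality is supposed to produce, so without a genuine estimate on it the conclusion $n\left(  d\left(  x,y\right)  -7\right)  _{+}^{2}\leq\psi\left(  x\right)  +C_{2}$ does not follow from what you wrote. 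By contrast, the paper's computation never meets this term: using $Ric\left(  \cdot,\xi\right)  =2n\eta$, $Ric^{T}=Ric+2g$ on $\Gamma\left(  D\right)  $ and $\left(  \mathrm{i}\right)  $ of (\ref{n1}), it asserts the clean identity $Ric\left(  \gamma^{\prime},\gamma^{\prime}\right)  =2n-\nabla_{\gamma^{\prime}}\nabla_{\gamma^{\prime}}\psi$ along the geodesic, then integrates $\int\phi^{2}\nabla_{\gamma^{\prime}}\nabla_{\gamma^{\prime}}\psi$ by parts so that only the two unit boundary layers survive, bounds these by $\sqrt{4n-2}\left(  \sqrt{\psi\left(  x\right)  +C_{2}}+\sqrt{\psi\left(  y\right)  +C_{2}}\right)  +\sqrt{4n-2}$ via the gradient bound, and solves the resulting inequality for $\sqrt{\psi\left(  x\right)  +C_{2}}$; no control of $u^{\prime}\left(  0\right)  $, no separate estimate of $\int Ric\left(  \gamma^{\prime},\gamma^{\prime}\right)  $, and no contact correction enter.

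So the entire divergence between your attempt and the paper's proof is whether $\mathrm{Hess}\,\psi\left(  \gamma_{D}^{\prime},\xi\right)  $ contributes along $\gamma$: the paper tacitly takes $\left(  \psi\circ\gamma\right)  ^{\prime\prime}=\mathrm{Hess}\,\psi\left(  \gamma_{D}^{\prime},\gamma_{D}^{\prime}\right)  $, while your decomposition $\mathrm{Hess}\,\psi\left(  T,\xi\right)  =-g\left(  \phi\nabla\psi,T\right)  $ retains it. To close your version along the lines you suggest, the natural move is to differentiate $g\left(  \phi\nabla\psi,\gamma^{\prime}\right)  $ along $\gamma$ using the transverse Killing property of $\phi\nabla^{T}\psi$ and the vanishing $\phi\nabla\psi\left(  y\right)  =0$ at the minimum, and then show the resulting contribution to $\int\phi^{2}u^{\prime\prime}$ is absorbed; that is exactly the step your proposal leaves open, and until it is supplied (or the cross term is shown to be absent along minimizing geodesics emanating from $y$, which would reduce your argument to the paper's), the lower half of the proposition — and with it the constant $7$ — is unproven.
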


\begin{proof}
Let $\gamma:\left[  0,s_{0}\right]  $ $\longrightarrow M$ be the minimal
normal geodesic from $x=\gamma \left(  0\right)  $ to $y=\gamma \left(
s_{0}\right)  $, may assume $s_{0}>2$ and
\[
\phi \left(  t\right)  =\left \{
\begin{array}
[c]{cl}%
t & on\text{ }\left[  0,1\right] \\
1 & on\text{ }\left[  1,s_{0}-1\right] \\
s_{0}-t & on\text{ }\left[  s_{0}-1,s_{0}\right]
\end{array}
\right.  .
\]
By the second variation of arc length, we have%
\[
\int_{0}^{s_{0}}\phi^{2}Ric\left(  \gamma^{\prime},\gamma^{\prime}\right)
\leq2n\int_{0}^{s_{0}}\left(  \phi^{\prime}\right)  ^{2}=4n.
\]
Because
\[%
\begin{array}
[c]{ccl}%
Ric\left(  \gamma^{\prime},\gamma^{\prime}\right)  & = & Ric\left(  \gamma
_{D}^{\prime},\gamma_{D}^{\prime}\right)  +2n\left(  \eta \left(
\gamma^{\prime}\right)  \right)  ^{2}\\
& = & Ric^{T}\left(  \gamma_{D}^{\prime},\gamma_{D}^{\prime}\right)
-2g\left(  \gamma_{D}^{\prime},\gamma_{D}^{\prime}\right)  +2n\left(
\eta \left(  \gamma^{\prime}\right)  \right)  ^{2}\\
& = & 2ng\left(  \gamma_{D}^{\prime},\gamma_{D}^{\prime}\right)  +2n\left(
\eta \left(  \gamma^{\prime}\right)  \right)  ^{2}-\nabla_{\gamma^{\prime}%
}\nabla_{\gamma^{\prime}}\psi \\
& = & 2n-\nabla_{\gamma^{\prime}}\nabla_{\gamma^{\prime}}\psi
\end{array}
\]
where $\gamma^{\prime}=\gamma_{D}^{\prime}+\eta \left(  \gamma^{\prime}\right)
\xi$, it is easy to find that%
\[
2n\left(  s_{0}-\frac{4}{3}\right)  =2n\int_{0}^{s_{0}}\phi^{2}\leq4n+\int
_{0}^{s_{0}}\phi^{2}\nabla_{\gamma^{\prime}}\nabla_{\gamma^{\prime}}\psi.
\]
Therefore, the lower bound of the potential $\psi$ could be deduced as below:%
\[%
\begin{array}
[c]{cl}
& 2nd\left(  x,y\right)  -\frac{20}{3}n\\
\leq & \int_{0}^{s_{0}}\phi^{2}\nabla_{\gamma^{\prime}}\nabla_{\gamma^{\prime
}}\psi \\
= & 2\int_{s_{0}-1}^{s_{0}}\phi \nabla_{\gamma^{\prime}}\psi-2\int_{0}^{1}%
\phi \nabla_{\gamma^{\prime}}\psi \\
\leq & \sqrt{4n-2}\left(  \sqrt{\psi \left(  x\right)  +C_{2}}+\sqrt
{\psi \left(  y\right)  +C_{2}}\right)  +\sqrt{4n-2}%
\end{array}
\]
and
\begin{equation}
0\leq R\left(  y\right)  =\left(  4n-2\right)  \psi \left(  y\right)
+C_{1}\leq4n^{2}+2n \label{p1}%
\end{equation}
imply that
\[
n\left(  d\left(  x,y\right)  -7\right)  _{+}^{2}\leq \psi \left(  x\right)
+C_{2}.
\]
Here we employ the Lipschitz property of $\sqrt{\psi+C_{2}}$ and the equality
$\left(  \mathrm{ii}\right)  $ of (\ref{n1}) stated in the subsequent section.
As for the upper bound, it is derived from the fact that the function
$\sqrt{\psi+C_{2}}$ is $\sqrt{n-\frac{1}{2}}$-Lipschitz. More precisely,
combining such fact with the inequality (\ref{p1}), we obtain%
\[
\sqrt{\psi \left(  x\right)  +C_{2}}\leq \sqrt{n-\frac{1}{2}}d\left(
x,y\right)  +\sqrt{\psi \left(  y\right)  +C_{2}}\leq \sqrt{n-\frac{1}{2}%
}d\left(  x,y\right)  +\sqrt{n\frac{2n+1}{2n-1}};
\]
then, it concludes that
\[
\psi \left(  x\right)  +C_{2}\leq n\left(  d\left(  x,y\right)  +\sqrt
{3}\right)  ^{2}.
\]

\end{proof}

\section{Positivity of the Scalar Curvature for Sasaki-Ricci Solitons}

For convenience, we collect here several properties of Sasaki-Ricci solitons
derived in the preceding section. Let $\left(  M^{2n+1},g,X\right)  $ be a
complete Sasaki-Ricci soliton and the corresponding Hamiltonian function
(potential) $\psi$ with respect to $X$, then, for any fixed local orthonormal
frame $\left \{  e_{j}\right \}  _{j=1}^{2n}$ in $\Gamma \left(  D\right)  $. We
have%
\begin{equation}
\left \{
\begin{array}
[c]{cl}%
\left(  \mathrm{i}\right)  & R_{jk}=2ng_{jk}-\psi_{,jk}\\
\left(  \mathrm{ii}\right)  & R+\Delta_{B}\psi=4n^{2}+2n\\
\left(  \mathrm{iii}\right)  & R_{,i}=2R_{ij}\psi_{,j}-2\psi_{,i}\\
\left(  \mathrm{iv}\right)  & R+\left \vert \nabla \psi \right \vert ^{2}=\left(
4n-2\right)  \psi+C_{1}\\
\left(  \mathrm{v}\right)  & \Delta_{B}R+2\left \vert Ric_{D}\right \vert
^{2}-2\left(  2n+1\right)  R+4n\left(  4n+1\right)  =g\left(  \nabla
R,\nabla \psi \right)  .
\end{array}
\right.  \label{n1}%
\end{equation}
Here $\left \vert Ric_{D}\right \vert ^{2}\triangleq%
{\displaystyle \sum \limits_{i,j=1}^{2n}}
\left(  R_{ij}\right)  ^{2}=$ $\left \vert Ric^{T}\right \vert ^{2}+8n-4R^{T}$
denotes the squared norm of the Ricci curvature restricted to the horizontal
distribution $\Gamma \left(  D\right)  $ and $C_{1}$ is a constant. Besides the
last equality $\left(  \mathrm{v}\right)  $ in (\ref{n1}), the rest ones has
been proved previously. Its derivation is similar to the Riemannian case;
however, for the sake of completeness, we sketch the proof as below.

\begin{proposition}
The equality $\left(  \mathrm{v}\right)  $ in (\ref{n1}) holds.
\end{proposition}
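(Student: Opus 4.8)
The plan is to mimic the classical derivation of the soliton identity for the scalar curvature of a gradient Ricci soliton, but carried out entirely with the transverse Levi-Civita (Bott) connection $\nabla^{T}$ on $\Gamma(D)$. The starting point is equality $(\mathrm{iii})$ of (\ref{n1}), namely $R_{,i}=2R_{ij}\psi_{,j}-2\psi_{,i}$, which I would differentiate once more and trace. Taking the transverse divergence $\sum_{i}\nabla_{i}$ of $(\mathrm{iii})$ gives
\[
\Delta_{B}R=2\big(R_{ij,i}\,\psi_{,j}+R_{ij}\,\psi_{,ij}\big)-2\Delta_{B}\psi,
\]
so the whole computation reduces to evaluating the two terms $R_{ij,i}\psi_{,j}$ and $R_{ij}\psi_{,ij}$ and then feeding in $(\mathrm{i})$ and $(\mathrm{ii})$.

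For the first term I would invoke the contracted second Bianchi identity for the transverse curvature, $R^{T}_{ij,i}=\tfrac12 R^{T}_{,j}$; since $R_{ij}=R^{T}_{ij}-2g_{ij}$ on $\Gamma(D)$ and since the trace relation $R^{T}=R+2n$ gives $\nabla R^{T}=\nabla R$ and $\Delta_{B}R^{T}=\Delta_{B}R$, this yields $R_{ij,i}\psi_{,j}=\tfrac12\,g(\nabla R,\nabla\psi)$, which after the overall factor $2$ reproduces precisely the drift term $g(\nabla R^{T},\nabla\psi)$ on the right-hand side of $(\mathrm{v})$. For the second term I would use the symmetry of the transverse Hessian of the basic function $\psi$ together with $(\mathrm{i})$, rewriting $\psi_{,ij}=2n\,g_{ij}-R_{ij}$, so that $R_{ij}\psi_{,ij}=2n\,R_{D}-|Ric_{D}|^{2}$ with $R_{D}=\sum_{j}R_{jj}=R-2n$. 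Substituting $\Delta_{B}\psi=4n^{2}+2n-R$ from $(\mathrm{ii})$ and collecting the constants then produces $(\mathrm{v})$, after noting $2(2n+1)=4n+2$ and $4n(4n+1)=16n^{2}+4n$; the bookkeeping is entirely forced and leaves no freedom.

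The part that requires genuine care---rather than arithmetic---is the commutation of the third covariant derivatives of $\psi$ and the validity of the contracted Bianchi identity in the foliated setting. Because $\nabla^{T}$ is the Bott connection on $v(\mathcal{F})\simeq D$ rather than the Levi-Civita connection of an honest Riemannian metric, interchanging $\nabla_{i}\nabla_{j}\nabla_{k}\psi$ and applying $R^{T}_{ij,i}=\tfrac12 R^{T}_{,j}$ must be justified through the transverse commutation formulas, checking that the terms involving the Reeb direction $\xi$ and the transverse K\"ahler form $d\eta$ either vanish---using that $\psi$ is basic and that $X$ is foliate, as already exploited in Lemma \ref{lemma1}---or cancel. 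This is exactly where the argument departs from the Riemannian case, and where I expect the main obstacle to lie; once the transverse Bianchi and Ricci-commutation identities are in place, the remaining manipulation is the routine substitution of $(\mathrm{i})$, $(\mathrm{ii})$ and $(\mathrm{iii})$ sketched above.
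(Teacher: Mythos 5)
Your proposal is correct, and it reaches $(\mathrm{v})$ by a genuinely different and shorter route than the paper. The paper works at the level of the Ricci tensor: it commutes second transverse covariant derivatives of the full curvature tensor, applies the second Bianchi identity, and first establishes a Lichnerowicz-type formula for $\Delta_{B}\left(R_{kl}\right)$ (involving $\frac{1}{2}R_{,kl}$, $R_{sk}R_{sl}$ and $R_{jklm}R_{jm}$), and only then traces over $k,l$ to obtain $(\mathrm{v})$. You instead trace at the earliest possible moment: taking the horizontal divergence of the first-order identity $(\mathrm{iii})$ of (\ref{n1}) reduces everything to $R_{ij,i}\psi_{,j}$ and $R_{ij}\psi_{,ij}$, which you handle with one application of the transverse contracted Bianchi identity $R_{ij,i}^{T}=\frac{1}{2}R_{,j}^{T}$ and the substitution $\psi_{,ij}=2ng_{ij}-R_{ij}$ from $(\mathrm{i})$; the bookkeeping indeed closes, since $2R_{ij}\psi_{,ij}=4n\left(R-2n\right)-2\left\vert Ric_{D}\right\vert^{2}$ and $-2\Delta_{B}\psi=2R-8n^{2}-4n$ by $(\mathrm{ii})$ give exactly $\Delta_{B}R+2\left\vert Ric_{D}\right\vert^{2}-2\left(2n+1\right)R+4n\left(4n+1\right)=g\left(\nabla R,\nabla\psi\right)$, and $R^{T}=R+2n$ converts $\Delta_{B}R$ and $\nabla R$ into $\Delta_{B}R^{T}$ and $\nabla R^{T}$. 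The foliated-setting worry you rightly flag is resolved in the standard way, and less painfully than you anticipate: every tensor in the computation ($\psi$, $g^{T}$, $Ric_{D}$, $R^{T}$) is basic, so the whole calculation projects to the local leaf spaces of the Reeb foliation, where $\nabla^{T}$ is the Levi-Civita connection of an honest (K\"{a}hler) metric and the Riemannian commutation formulas and contracted Bianchi identity hold verbatim --- no residual $\xi$- or $d\eta$-terms arise because you never differentiate in the Reeb direction; this is precisely why the paper can say the derivation is ``similar to the Riemannian case.'' What the paper's longer route buys is the intermediate tensor-level identity for the Laplacian of the Ricci curvature, in the same spirit as the formulas $(\mathrm{iii})$ and $(\mathrm{iv})$ of (\ref{s1}) needed later; your route buys brevity, avoiding the curvature commutations and the double trace entirely.
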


\begin{proof}
By the commutation formula and the first equation $\left(  \mathrm{i}\right)
$ in (\ref{n1}), we have
\[%
\begin{array}
[c]{ccl}%
-\left(  R_{jklm}\psi_{,m}\right)  _{,i} & = & -\left(  R_{kl}-2ng_{kl}%
\right)  _{,ji}+\left(  R_{jl}-2ng_{jl}\right)  _{,ki}\\
& = & -R_{kl,ji}+R_{jl,ki}%
\end{array}
.
\]
Hereafter the indices are with respect to the local orthonormal frame
$\left \{  e_{j}\right \}  _{j=1}^{2n}$. Taking the trace of both sides with
respect to $i$ and $j$, it becomes%
\[
\Delta_{B}\left(  R_{kl}\right)  -\frac{1}{2}R_{,kl}-R_{sk}R_{sl}%
+R_{kl}+2R_{jklm}R_{jm}-2nR_{jklj}=R_{jklm,j}\psi_{,m}%
\]
where we utilized the commutation formula, the first equation $\left(
\mathrm{i}\right)  $ in (\ref{n1}) and
\[
R_{jskj}R_{sl}=R_{sk}R_{sl}-R_{kl}.
\]
Subsequently, by
\[
R_{kjjl}=R_{kl}-g_{kl}%
\]
and the second Bianchi identity, we obtain%
\[
\Delta_{B}\left(  R_{kl}\right)  -\frac{1}{2}R_{,kl}-R_{sk}R_{sl}-\left(
2n-1\right)  R_{kl}+2ng_{kl}+2R_{jklm}R_{jm}=\left(  -R_{km,l}+R_{kl,m}%
\right)  \psi_{,m}.
\]
Then, taking the trace again with respect to $k$ and $l$, it could be deduced
that
\[
\Delta_{B}R+2\left \vert Ric_{D}\right \vert ^{2}-2\left(  2n+1\right)
R+4n\left(  4n+1\right)  =g\left(  \nabla R,\nabla \psi \right)  .
\]

\end{proof}

\bigskip Note that the last equality could be also expressed in%
\[
\Delta_{B,\psi}R^{T}+2\left \vert Ric^{T}\right \vert ^{2}-2\left(  2n+5\right)
R^{T}+24n\left(  n+1\right)  =0.
\]

Now we are able to show the nonnegativity of the transverse scalar curvature
of Sasaki-Ricci solitons by following the elliptic method adopted by Zhu-Hong
Zhang \cite{zha09.2} (or see \cite{chowetal15}). Another way towards the fact
by the parabolic method could be referred to the original paper \cite{che09}
of B.-L. Chen.

\begin{theorem}
\label{nt}The scalar curvature of any complete noncompact Sasaki-Ricci soliton
$\left(  M^{2n+1},g,X\right)  $ is positive.
\end{theorem}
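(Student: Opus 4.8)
The plan is to convert the fundamental equation $\left(\mathrm{v}\right)$ of (\ref{n1}) into a drift-Laplacian inequality for the basic scalar curvature $R$ and then run a maximum principle. Writing $\Delta_{B,\psi}u\doteqdot\Delta_{B}u-g\left(\nabla\psi,\nabla u\right)$ for the weighted Laplacian and using $R^{T}=R+2n$, equation $\left(\mathrm{v}\right)$ can be recast as
\[
\Delta_{B,\psi}R=2\left(2n+1\right)R-2\left \vert Ric_{D}\right \vert^{2}-4n\left(4n+1\right),
\]
which is the transverse analogue of the elliptic identity for the scalar curvature of a shrinker used by Chow et al.

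I would then insert the pointwise Cauchy--Schwarz bound. Since $Ric\left(\cdot,\xi\right)=2n\eta$ gives $\sum_{i=1}^{2n}R_{ii}=R-R_{00}=R-2n$, we have $\left \vert Ric_{D}\right \vert^{2}\geq \tfrac{1}{2n}\left(R-2n\right)^{2}$, so that
\[
\Delta_{B,\psi}R\leq Q\left(R\right),\qquad Q\left(r\right)\doteqdot-\tfrac{1}{n}\left(r-4n\right)\left(r-2n\left(2n+1\right)\right).
\]
The downward-opening quadratic $Q$ vanishes exactly at $r=4n$ and $r=2n\left(2n+1\right)$, hence $Q\left(r\right)<0$ for every $r<4n$; in particular $R$ is a strict drift-superharmonic function wherever $R<4n$, and thus wherever $R\leq0$.

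The core is a maximum principle for $\Delta_{B,\psi}$ on the complete noncompact $M$. If $R$ attained its infimum at some point $p$, then $\nabla R\left(p\right)=0$ and $\Delta_{B}R\left(p\right)\geq0$ would force $0\leq Q\left(R\left(p\right)\right)$, i.e. $R\left(p\right)\geq4n$, and the claim would follow immediately. As $M$ is noncompact the infimum need not be attained, so I would instead adapt the elliptic scheme of Chow et al.: produce a minimizing sequence $x_{k}$ with $R\left(x_{k}\right)\to\inf R$, $\left \vert \nabla R\left(x_{k}\right)\right \vert \to0$ and $\liminf_{k}\Delta_{B,\psi}R\left(x_{k}\right)\geq0$. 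Letting $k\to\infty$ in $\Delta_{B,\psi}R\leq Q\left(R\right)$ and using continuity of $Q$ then gives $Q\left(\inf R\right)\geq0$ (and simultaneously rules out $\inf R=-\infty$, since $Q\left(R\left(x_{k}\right)\right)\to-\infty$ would violate the $\liminf$), whence $\inf R\geq4n>0$ and a fortiori $R\geq0$.

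The step I expect to be the main obstacle is exactly the production of this minimizing sequence, that is, the validity of the maximum principle at infinity. Unlike §4, here I may not invoke the potential estimates, because the Lemma and Proposition \ref{prop1} were established under the hypothesis $R\geq0$, which is precisely what is at stake; consequently there is neither an a priori lower bound on $R$ nor an a priori lower Ricci bound available to feed into a Laplacian comparison. The genuine work, following B.-L. Chen's localized argument, is to use cutoff functions supported on geodesic balls together with the soliton identities $\left(\mathrm{i}\right)$--$\left(\mathrm{iv}\right)$ to control all error terms intrinsically, so that $\liminf_{k}\Delta_{B,\psi}R\left(x_{k}\right)\geq0$ holds without presupposing sign-dependent bounds. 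Once this is secured, the inequality $\Delta_{B,\psi}R\leq Q\left(R\right)$ in fact delivers the strictly positive lower bound $R\geq4n$ anticipated in the introduction.
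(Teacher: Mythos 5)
Your reductions are correct, and in one respect sharper than the paper's: equation $\left(\mathrm{v}\right)$ of (\ref{n1}) does recast as $\Delta_{B,\psi}R=2\left(2n+1\right)R-2\left\vert Ric_{D}\right\vert^{2}-4n\left(4n+1\right)$ (since $R^{T}=R+2n$ differs from $R$ by a constant), and because $\mathrm{tr}\left(Ric_{D}\right)=R-2n$, Cauchy--Schwarz gives exactly your quadratic $Q$ with roots $4n$ and $2n\left(2n+1\right)$; the paper works with the slightly weaker bound $R^{2}\leq2n\left\vert Ric_{D}\right\vert^{2}+4nR+12n^{2}$. You also correctly diagnose the circularity: the potential estimates of Section 4 presuppose $R\geq0$ and are off limits here. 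But your proposal stops precisely where the proof begins. The ``minimizing sequence $x_{k}$ with $\left\vert\nabla R\left(x_{k}\right)\right\vert\to0$ and $\liminf_{k}\Delta_{B,\psi}R\left(x_{k}\right)\geq0$'' is a full Omori--Yau principle for the drift Laplacian, and nothing at this stage justifies it: there is no a priori lower bound on the relevant Bakry--\'{E}mery tensor in all directions (the soliton equation controls it only horizontally; the mixed $\xi$-components of $Hess\left(\psi\right)$ involve $\phi\nabla\psi$ and are not a priori bounded), and no growth control on $\psi$. Announcing that the ``genuine work'' is to control the cutoff error terms intrinsically names the obstacle without supplying the mechanism, and the mechanism is the entire content of the paper's proof.

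Concretely, the paper localizes with $\phi_{c}=\eta\left(r/c\right)R$, shows a negative minimum point $x_{c}$ must lie in the annulus $B\left(p,3c\right)\backslash B\left(p,c\right)$, and then estimates $\Delta_{B,\psi}r$ at $x_{c}$ by combining the second-variation comparison (Lemma 18.6 of Chow et al.) with two soliton facts: along the minimal geodesic $\gamma$ from $p$ one has $Ric\left(\gamma^{\prime},\gamma^{\prime}\right)=2n-Hess\left(\psi\right)\left(\gamma^{\prime},\gamma^{\prime}\right)$ (by $\left(\mathrm{i}\right)$ of (\ref{n1}) together with $Ric\left(\cdot,\xi\right)=2n\eta$), and $\int_{0}^{r\left(x_{c}\right)}Hess\left(\psi\right)\left(\gamma^{\prime},\gamma^{\prime}\right)=g\left(\nabla\psi\left(x_{c}\right),\gamma^{\prime}\right)-g\left(\nabla\psi\left(p\right),\gamma^{\prime}\left(0\right)\right)$. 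The endpoint term at $x_{c}$ cancels exactly the drift term $-g\left(\nabla\psi,\nabla r\right)$ in $\Delta_{B,\psi}r$, leaving the bound (\ref{n4}) whose only unbounded term $-2nr\left(x_{c}\right)$ enters (\ref{n3}) multiplied by $\eta^{\prime}/c\leq0$, hence with a favorable sign; every surviving error is $O\left(1/c\right)$ with constants depending only on the fixed ball $\overline{B\left(p,1\right)}$, and $c\to\infty$ yields $R\geq0$. Note finally that your concluding claim $\inf R\geq4n$ overshoots what this scheme can give: in the annulus case the factor $\eta\left(r\left(x_{c}\right)/c\right)$ in front of the good term $Q$ degenerates, so the localization produces only $R\geq-C/c\to0$; the paper then gets strict positivity from the strong maximum principle applied to $\left(\mathrm{v}\right)$ of (\ref{n1}), and the quantitative statement $R\psi\geq C_{3}$ only appears in Section 6, after the potential estimates have been legitimized by $R\geq0$.
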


\begin{proof}
Fix a point $p$ in $M$. Set $r\left(  x\right)  =d\left(  p,x\right)  $ and
$\eta:\left[  0,+\infty \right)  \longrightarrow \left[  0,1\right]  $ to be the
distance function and the cut-off function respectively. The cut-off function
$\eta$ is defined to be a smooth decreasing function which satisfies
\[
\eta \left(  t\right)  =\left \{
\begin{array}
[c]{cl}%
1 & \mathrm{on}\text{ }\left[  0,1\right] \\
0 & \mathrm{on}\text{ }\left[  3,+\infty \right)
\end{array}
\right.
\]
and $\eta^{\prime \prime}-2\frac{\left(  \eta^{\prime}\right)  ^{2}}{\eta}$ is
bounded from below by a uniform constant, say $-C_{0}$. Let $\phi_{c}\left(
x\right)  \triangleq \eta \left(  \frac{r\left(  x\right)  }{c}\right)  R\left(
x\right)  $ where $r\left(  x\right)  =d\left(  p,x\right)  $ and $c\geq2$.
Hereafter, we drop the notation "$\circ \frac{r\left(  x\right)  }{c}$" in the
expression involving the function $\eta$. First, we compute the basic weighted
$\psi$-Laplacian of $\phi_{c}$:%
\begin{equation}%
\begin{array}
[c]{ccl}%
\Delta_{B,\psi}\phi_{c} & = & \eta \Delta_{B,\psi}R+\frac{2}{c}\eta^{\prime
}g\left(  \nabla r,\nabla R\right)  +\left(  \frac{\eta^{\prime}}{c}%
\Delta_{B,\psi}r+\frac{\eta^{\prime \prime}}{c^{2}}\left \vert \nabla
r\right \vert ^{2}\right)  R\\
& = & \eta \left[  -2\left \vert Ric_{D}\right \vert ^{2}+2\left(  2n+1\right)
R-4n\left(  4n+1\right)  \right]  +\\
&  & \frac{2}{c}\frac{\eta^{\prime}}{\eta}g\left(  \nabla r,\nabla \phi
_{c}\right)  +\frac{\eta^{\prime}}{c}R\Delta_{B,\psi}r+\frac{1}{c^{2}}\left(
\eta^{\prime \prime}-2\frac{\left(  \eta^{\prime}\right)  ^{2}}{\eta}\right)  R
\end{array}
\label{n2}%
\end{equation}
on $\left[  \eta \neq0\right]  $ with the help of the equality $\left(
\mathrm{v}\right)  $ in (\ref{n1}). Assume $x_{c}$ achieves the minimum of
$\phi_{c}$, i.e. $\phi_{c}\left(  x_{c}\right)  =\underset{M}{\text{ }\min
}\phi_{c}<0$. So $R\left(  x_{c}\right)  <0$. Then, by applying the maximum
principle to (\ref{n2}) and the fact that%
\[
R^{2}\leq2n\left \vert Ric_{D}\right \vert ^{2}+4nR+12n^{2},
\]
we have, at the minimum point $x_{c}$,
\begin{equation}
0\geq \eta \left[  2\left(  2n+3\right)  -\frac{R}{n}+8n\frac{\left(
1-2n\right)  }{R}\right]  +\frac{\eta^{\prime}}{c}\Delta_{B,\psi}r+\frac
{1}{c^{2}}\left(  \eta^{\prime \prime}-2\frac{\left(  \eta^{\prime}\right)
^{2}}{\eta}\right)  . \label{n3}%
\end{equation}
When $x_{c}\in B\left(  p,c\right)  $, it could be observed that
\[%
\begin{array}
[c]{ccl}%
0 & \geq & 2\left(  2n+3\right)  -\frac{R\left(  x_{c}\right)  }{n}\\
& = & 2\left(  2n+3\right)  -\frac{\phi_{c}\left(  x_{c}\right)  }{n}\\
& \geq & 2\left(  2n+3\right)  -\frac{\phi_{c}\left(  x\right)  }{n}\\
& = & 2\left(  2n+3\right)  -\frac{1}{n}R\left(  x\right)
\end{array}
\]
on $B\left(  p,c\right)  $. This implies that $R\geq2n\left(  2n+3\right)  >0$
on $B\left(  p,c\right)  $. This contradicts with $R\left(  x_{c}\right)  <0$.
Accordingly, the point $x_{c}$ lies in the annulus $B\left(  p,3c\right)
\backslash B\left(  p,c\right)  $. With the help of the equality $\left(
\mathrm{i}\right)  $ in (\ref{n1}), Lemma 18.6 in \cite{chowetal10}, and the
equality%
\[%
{\displaystyle \int \limits_{\left[  0,r\left(  x_{c}\right)  \right]  }}
Hess\left(  \psi \right)  \left(  \gamma^{\prime},\gamma^{\prime}\right)
=g\left(  \left(  \nabla \psi \right)  \left(  x_{c}\right)  ,\gamma^{\prime
}\left(  r\left(  x_{c}\right)  \right)  \right)  -g\left(  \left(  \nabla
\psi \right)  \left(  p\right)  ,\gamma^{\prime}\left(  0\right)  \right)
\]
where $\gamma:$ $\left[  0,r\left(  x_{c}\right)  \right]  \longrightarrow M$
denotes the normal minimal geodesic from $p$ to $x_{c}$, we see that
\[%
\begin{array}
[c]{ccl}%
\Delta_{B,\psi}r & \leq & \left(  n-1\right)
{\displaystyle \int \limits_{\left[  0,r\left(  x_{c}\right)  \right]  }}
\left(  \zeta^{\prime}\right)  ^{2}-g\left(  \left(  \nabla \psi \right)
\left(  p\right)  ,\gamma^{\prime}\left(  0\right)  \right) \\
&  & -2nr\left(  x_{c}\right)  +%
{\displaystyle \int \limits_{\left[  0,r\left(  x_{c}\right)  \right]  }}
\left(  1-\zeta^{2}\right)  Ric\left(  \gamma^{\prime},\gamma^{\prime}\right)
\end{array}
\]
for any continuous, piecewise smooth function $\zeta:\left[  0,r\left(
x_{c}\right)  \right]  \longrightarrow M$ with $\zeta \left(  0\right)  =0$ and
$\zeta \left(  r\left(  x_{c}\right)  \right)  =1$. Take the choice
\[
\zeta \left(  s\right)  =\left \{
\begin{array}
[c]{cl}%
s & \mathrm{on}\text{ }\left[  0,1\right] \\
1 & \mathrm{on}\text{ }\left[  1,r\left(  x_{c}\right)  \right]
\end{array}
\right.  .
\]
The last inequality becomes
\begin{equation}
\Delta_{B,\psi}r\leq \left(  n-1\right)  -g\left(  \left(  \nabla \psi \right)
\left(  p\right)  ,\gamma^{\prime}\left(  0\right)  \right)  -2nr\left(
x_{c}\right)  +\frac{2}{3}\max_{\overline{B\left(  p,1\right)  }}Ric^{+}.
\label{n4}%
\end{equation}
Here $Ric^{+}\triangleq \max \left \{  Ric,0\right \}  $ and%
\[
\max_{\overline{B\left(  p,1\right)  }}Ric^{+}\triangleq \max_{\substack{x\in
\overline{B\left(  p,1\right)  }\\v_{x}\in T_{x}M\cap S^{2n}}}Ric^{+}\left(
x\right)  \left(  v_{x},v_{x}\right)  .
\]
Combing the inequalities (\ref{n3}) and (\ref{n4}), we are able to get, on
$B\left(  p,c\right)  $
\[%
\begin{array}
[c]{cl}
& \frac{1}{n}R\left(  x\right) \\
\geq & \frac{1}{n}\phi_{c}\left(  x_{c}\right) \\
\geq & \frac{\eta^{\prime}}{c}\left[  \left(  n-1\right)  -g\left(  \left(
\nabla \psi \right)  \left(  p\right)  ,\gamma^{\prime}\left(  0\right)
\right)  +\frac{2}{3}\underset{\overline{B\left(  p,1\right)  }}{\max}%
Ric^{+}\right] \\
& -\frac{2n\eta^{\prime}}{c}r\left(  x_{c}\right)  +2\left(  2n+3\right)
\eta-\frac{C_{1}}{c^{2}}\\
\geq & -\frac{C_{2}}{c}\left[  \left(  n-1\right)  +\left \vert \left(
\nabla \psi \right)  \right \vert \left(  p\right)  +\frac{2}{3}\underset
{\overline{B\left(  p,1\right)  }}{\max}Ric^{+}\right]  -\frac{C_{3}}{c^{2}}%
\end{array}
\]
for the uniform constants $C_{1}$,$C_{2}$,$C_{3}$. Therefore, as
$c\longrightarrow+\infty$, the scalar curvature of any complete noncompact
Sasaki-Ricci soliton $\left(  M^{2n+1},g,X\right)  $ is nonnegative. By
applying the maximum principle to the equation $\left(  \mathrm{v}\right)  $
in (\ref{n1}), we find that the scalar curvature is actually positive.
\end{proof}

\section{A Criterion for the Transverse Rigidity of Sasaki-Ricci Solitons}

In this section, we establish a criterion which characterize when a
Sasaki-Ricci soliton is transversely rigid, and subsequently it enable us to
classify the Sasaki-Ricci solitons of constant scalar curvature at least in
low-dimensional cases. Before presenting the criterion, we first show the
quantization of the scalar curvature of a Sasaki-Ricci soliton with constant
scalar curvature by studying geometric data which envolved from the
isoparametric theory.

\begin{proposition}
\label{Pa1}(= Proposition 1.)If $\left(  M^{2n+1},g,X\right)  $ is a complete
Sasaki-Ricci soliton with constant scalar curvature $R$ and the Hamiltonian
potential $\psi$ with respect to $X$, then $R$ must lie in the finite set
$\left \{  \left(  2n-1\right)  k+\left(  2n+1\right)  \right \}  _{k=1}^{2n+1}$.
\end{proposition}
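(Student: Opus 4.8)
The plan is to show that the constancy of $R$ turns the Hamiltonian potential $\psi$ into a transverse isoparametric function, and then to extract the admissible values of $R$ from the transverse Hessian of $\psi$ along its minimal set, exactly in the spirit of Q.-M. Wang's isoparametric theory as exploited in \cite{flgr16}. First I would feed $R\equiv\mathrm{const}$ into the structure equations (\ref{n1}). Equation $(\mathrm{iii})$ becomes $R_{ij}\psi_{,j}=\psi_{,i}$, so $\nabla^{T}\psi$ is a $Ric_{D}$-eigenvector with eigenvalue $1$; by $(\mathrm{i})$ we have $\mathrm{Hess}^{T}\psi=2ng^{T}-Ric_{D}$, whence $\mathrm{Hess}^{T}\psi(\nabla^{T}\psi)=(2n-1)\nabla^{T}\psi$. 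Equation $(\mathrm{ii})$ gives that $\Delta_{B}\psi=4n^{2}+2n-R$ is constant, and $(\mathrm{iv})$ gives
\[
|\nabla^{T}\psi|^{2}=(4n-2)\psi+C_{1}-R,
\]
an affine function of $\psi$ with positive slope. Thus $\psi$ is transnormal, indeed isoparametric, in the transverse geometry.

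Next I would locate the focal set. By the positivity of $R$ and the potential estimate of Proposition \ref{prop1}, $\psi$ is bounded below and attains its minimum. Since $|\nabla^{T}\psi|^{2}$ is affine in $\psi$, every critical point of $\psi$ lies on the single level set $\{\psi=c_{0}\}$ with $c_{0}=(R-C_{1})/(4n-2)$, which is therefore the global minimum set $M_{-}$; in particular a nonconstant $\psi$ admits no interior maximum, so on a compact $M$ one is forced back to $\psi\equiv\mathrm{const}$, i.e. the Sasaki-Einstein case $R=4n^{2}+2n$. Applying Wang's structure theorem to the transnormal basic function $\psi$, the set $M_{-}$ is a smooth, Reeb-invariant transverse submanifold, and the transverse normal exponential map furnishes the local disk-bundle picture around it. Let $\ell$ be the real codimension of $M_{-}$ in the transverse space, so that $1\le\ell\le 2n$ when $\psi$ is nonconstant.

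I would then read off the eigenvalues of $\mathrm{Hess}^{T}\psi$ on $M_{-}$. Along directions tangent to $M_{-}$ the potential is constant to second order, so $\mathrm{Hess}^{T}\psi=0$ there and by $(\mathrm{i})$ the corresponding $Ric_{D}$-eigenvalue equals $2n$, with multiplicity $2n-\ell$. For a normal direction with Hessian eigenvalue $\beta$, the second-order expansion $\psi\approx c_{0}+\tfrac12\sum_{a}\beta_{a}\rho_{a}^{2}$ inserted into $|\nabla^{T}\psi|^{2}=(4n-2)(\psi-c_{0})$ forces $\beta_{a}^{2}=(2n-1)\beta_{a}$, hence $\beta_{a}=2n-1$ and the normal $Ric_{D}$-eigenvalue equals $1$, with multiplicity $\ell$. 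Since $Ric(\cdot,\xi)=2n\eta$, evaluating the constant $R$ on $M_{-}$ gives
\[
R=2n+\mathrm{tr}_{D}Ric_{D}=2n+2n(2n-\ell)+\ell=4n^{2}+2n-(2n-1)\ell.
\]
Setting $k=2n+1-\ell$, which sweeps out $\{1,\dots,2n+1\}$ as $\ell$ runs over $\{0,1,\dots,2n\}$ (the value $\ell=0$ being Sasaki-Einstein), this is precisely $R=(2n-1)k+(2n+1)$. I would also note, consistently with the claim, that $Ric_{D}$ commutes with the transverse $J$, so $\mathrm{Hess}^{T}\psi$ and hence $M_{-}$ are $J$-invariant; this forces $\ell$ to be even, and the stated set already contains the resulting values.

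The main obstacle I anticipate is the faithful transverse/foliated adaptation of Wang's isoparametric machinery: one must certify that the transnormal property of the \emph{basic} function $\psi$ produces a genuinely smooth, Reeb-invariant focal submanifold $M_{-}$ with the clean normal-Hessian structure above, and that the noncompact regime—where $\psi$ is unbounded above and only the single focal set $M_{-}$ is present—is controlled by completeness together with the quadratic growth from Proposition \ref{prop1}. The decisive point is the eigenvalue dichotomy on $M_{-}$, namely that the affine normalization of $|\nabla^{T}\psi|^{2}$ pins every normal Hessian eigenvalue to exactly $2n-1$; once this is in hand, the quantization of $R$ is immediate.
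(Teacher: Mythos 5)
Your proposal is correct and takes essentially the same route as the paper: constancy of $R$ together with the identities in (\ref{n1}) makes $\psi$ an isoparametric function with $\left\vert \nabla\psi\right\vert ^{2}=(4n-2)\psi+C_{1}-R$, and Q.-M. Wang's structure theory applied to the nonempty minimum set $M_{\min}$ (nonempty by Proposition \ref{prop1}, with the Reeb field tangent so $k\geq1$) pins the Hessian eigenvalues to $0$ and $2n-1$, hence the Ricci eigenvalues to $2n$ and $1$, giving $R=(2n-1)k+(2n+1)$ exactly as in the paper's proof of Proposition \ref{Pa1}. Your second-order expansion justifying the normal eigenvalue $2n-1$ and your remark that $J$-invariance of $Ric_{D}$ forces even transverse codimension are harmless refinements of the same argument.
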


\begin{proof}
From Proposition \ref{prop1}, we see that the focal variety $M_{\min}%
\doteq \left[  \psi=\underset{M}{\text{ }\min}\psi \right]  $ is nonempty.
Qi-Ming Wang had showed that such variety $M_{\min}$ is actually a smooth
minimal submanifold and the Hessian operator restricted to the focal
submanifold $M_{\min}$ has only two distinct eigenvalues up to the
multiplicity
\[
Hess\left(  \psi \right)  |_{M_{\min}}=\left \{
\begin{array}
[c]{cc}%
0 & \mathrm{on}\text{ }\Gamma \left(  TM_{\min}\otimes TM_{\min}\right) \\
\left(  2n-1\right)  g & \mathrm{on}\text{ }\Gamma \left(  TM_{\min}^{\perp
}\otimes TM_{\min}^{\perp}\right)
\end{array}
\right.
\]
by adapting his conclusion to our case, that is the isoparametric function
determined by the equation $\left(  \mathrm{ii}\right)  $ as well as the one
$\left(  \mathrm{iv}\right)  $ of (\ref{n1}). And then, by the equation of a
Sasaki-Ricci soltion, we get
\begin{equation}
Ric|_{M_{\min}}=\left(
\begin{array}
[c]{cc}%
2nI_{k} & 0\\
0 & I_{\left(  2n+1\right)  -k}%
\end{array}
\right)  _{\left(  2n+1\right)  \times \left(  2n+1\right)  }. \label{a1}%
\end{equation}
Here $I_{k}$ denotes the identity $k\times k$-matrix and $k=\dim M_{\min}$. It
is worth to note that the Reeb vector field is tangent to the focal
submanifold $M_{\min}$ so that $k\geq1$. From the expression (\ref{a1}) and
the assumption of constant scalar curvature, it could be observed that the
possible values of the scalar curvature are quantified and lie in the finite
set $\left \{  \left(  2n-1\right)  k+\left(  2n+1\right)  \right \}
_{k=1}^{2n+1}$.
\end{proof}

Now the main result of this paper could be manifested as below.

\begin{theorem}
\label{criteria1}(= Theorem 1.)If $\left(  M,g,X\right)  $ is a complete
Sasaki-Ricci soliton with the Hamiltonian potential $\psi$ with respect to
$X$, then the following are equivalent: \newline(i) $\left(  M,g,X\right)  $
is transversely rigid. \newline(ii) The scalar curvature is constant and it is
transversely radially flat, i.e. \newline$R\left(  \cdot,\nabla^{T}%
\psi \right)  \nabla^{T}\psi$ vanishes along the horizontal direction.
\newline(iii) The scalar curvature is constant and $g\leq Ric\leq2ng$.
\end{theorem}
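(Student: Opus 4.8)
The plan is to prove the three statements equivalent via the cycle (i) $\Rightarrow$ (iii) $\Rightarrow$ (ii) $\Rightarrow$ (i), using the transversely rigid model as the ``geometric'' endpoint and the fundamental identities (\ref{n1}) and (\ref{s1}) for the analytic directions. Write $S \doteqdot Ric_D$ for the horizontal Ricci operator on $\Gamma(D)$. I first record two facts that hold the instant $R$ is constant. Equation $(\mathrm{iii})$ of (\ref{n1}), namely $R_{,i} = 2R_{ij}\psi_{,j} - 2\psi_{,i}$, then forces $S(\nabla^{T}\psi) = \nabla^{T}\psi$, so $\nabla^{T}\psi$ is an eigenvector of $S$ with eigenvalue $1$. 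Also, since $R^{T} = R + 2n$ is constant, equation $(\mathrm{v})$ of (\ref{n1}) collapses to $|Ric_D|^{2} = (2n+1)R - 2n(4n+1)$, which, using $\operatorname{tr} S = R - 2n$, is precisely the statement $\operatorname{tr}\big[(S-I)(S-2nI)\big] = 0$.

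For (i) $\Rightarrow$ (iii) I would compute directly on the transversely rigid model. The transverse product structure makes $R^{T}$ constant, and the focal-variety normal form already obtained in Proposition \ref{Pa1} (see (\ref{a1})) shows the eigenvalues of $Ric$ lie in $\{1,2n\}$, hence $g \leq Ric \leq 2ng$. This is a bookkeeping computation and presents no difficulty.

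The analytic core is (iii) $\Rightarrow$ (ii). First I exploit the pinching algebraically: $I \leq S \leq 2nI$ means $S-I \geq 0$ and $2nI - S \geq 0$ as commuting self-adjoint operators, so $(S-I)(2nI-S) \geq 0$; but its trace is $-\operatorname{tr}\big[(S-I)(S-2nI)\big] = 0$ by the constant-$R$ identity above, whence $(S-I)(S-2nI) = 0$ and the eigenvalues of $S$ are \emph{quantized} to exactly $1$ or $2n$. Next, combining $(\mathrm{ii})$ of (\ref{s1}) with $(\mathrm{i})$ of (\ref{n1}) and $R_{,il}=0$ yields the pointwise identity
\[
R_{ijkl}\psi_{,j}\psi_{,k} = R_{il,p}\psi_{,p} - \big[(S-I)(S-2nI)\big]_{il},
\]
so once the eigenvalues are quantized the transverse radial curvature equals $\nabla^{T}_{\nabla^{T}\psi}Ric_D$. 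The remaining task, and the first genuine obstacle, is to show this gradient term vanishes, i.e.\ that the eigendistributions $E_{1} = \ker(S-I)$ (which contains $\nabla^{T}\psi$) and $E_{2n} = \ker(S-2nI)$ are parallel along $\nabla^{T}\psi$; the contraction with $\nabla^{T}\psi$ alone is automatic from $\nabla^{T}_{\nabla^{T}\psi}\nabla^{T}\psi = (2n-1)\nabla^{T}\psi \in E_{1}$, but the full statement requires feeding the quantization into the weighted Bochner formula $(\mathrm{iv})$ of (\ref{s1}) for $Ric^{T}$ and running a maximum-principle/integration argument, using the potential growth of Proposition \ref{prop1} to control the noncompact ends, to force $\nabla^{T}Ric_D = 0$ in the relevant contraction and hence $R(\cdot,\nabla^{T}\psi)\nabla^{T}\psi = 0$.

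Finally, (ii) $\Rightarrow$ (i) is the step I expect to be the main obstacle. From transverse radial flatness together with constant $R$ I must reconstruct the isometric product. The plan is to show that the radially flat directions and their orthogonal complement give integrable distributions with totally geodesic leaves, that the leaves tangent to $\nabla^{T}\psi$ are transversely flat with $\nabla^{T}\psi$ realizing $\psi = n|z|^{2}$ on a $\mathbb{C}^{k}$ factor while the orthogonal leaves are Sasaki $\eta$-Einstein, and then to invoke a transverse de~Rham--type splitting theorem adapted to the Reeb foliation to assemble these into the quotient product of the definition of transverse rigidity. The serious difficulties here are precisely the integrability and parallelism of these distributions in the foliated (transverse) category and the availability of a de~Rham decomposition in this setting, rather than the algebra, which is clean.
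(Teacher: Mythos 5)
Your cycle (i)$\Rightarrow$(iii)$\Rightarrow$(ii)$\Rightarrow$(i) is the paper's, and the algebraic half of (iii)$\Rightarrow$(ii) coincides with it exactly: your trace identity $\operatorname{tr}\bigl[(S-I)(S-2nI)\bigr]=0$ is the constant-$R$ form of the paper's (\ref{a3}) (you extract it from $\left(\mathrm{v}\right)$ of (\ref{n1}), the paper from the curvature contraction (\ref{a2}) --- equivalent), the commuting-positive-operators pinch quantizing the eigenvalues of $Ric_{D}$ to $\{1,2n\}$ is the same, and your pointwise identity $R_{ijkl}\psi_{,j}\psi_{,k}=R_{il,p}\psi_{,p}-\bigl[(S-I)(S-2nI)\bigr]_{il}$ is precisely (\ref{a4})--(\ref{a5}). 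The genuine gap is your last step: you propose to kill the term $R_{il,p}\psi_{,p}$ by a maximum-principle/integration argument on $\left(\mathrm{iv}\right)$ of (\ref{s1}), and this would fail. After quantization every scalar trace quantity ($R$, $R_{D}$, $\left\vert Ric_{D}\right\vert^{2}$) is already constant, so scalar maximum principles extract nothing further; $\nabla^{T}_{\nabla^{T}\psi}Ric_{D}$ measures the \emph{rotation} of the eigendistributions $\ker(S-I)$ and $\ker(S-2nI)$, which no trace-level equation sees. Concretely, tracing $\left(\mathrm{iv}\right)$ of (\ref{s1}) against $Ric_{D}$ gives $\left\vert \nabla^{T}Ric_{D}\right\vert^{2}=-4n\left(\left\vert Ric_{D}\right\vert^{2}-R_{D}\right)-2R_{il}R_{pq}R_{iplq}$, where the first term is nonpositive (eigenvalues $\geq1$) and the cubic curvature contraction is sign-indefinite --- you cannot close the argument without exactly the curvature control you are trying to prove. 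The paper's missing idea is purely pointwise (Petersen--Wylie): diagonalize the transverse Hessian $Hess_{b}\psi=2ng^{T}-Ric_{D}$, whose eigenvalues are now the \emph{constants} $0$ and $2n-1$; then (\ref{a5}) yields $-R\left(E_{j},\nabla^{T}\psi,\nabla^{T}\psi,E_{k}\right)=\left(\lambda_{j}-\lambda_{k}\right)g\left(\nabla^{T}_{\nabla^{T}\psi}E_{j},E_{k}\right)$, and playing this off against the pair symmetry of the tensor $R\left(\cdot,\nabla^{T}\psi,\nabla^{T}\psi,\cdot\right)$ --- crucially using that the eigenvalue gaps are constant --- forces the whole tensor to vanish, with no global analysis, no use of Proposition \ref{prop1}, and no noncompact-end control.

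For (ii)$\Rightarrow$(i), which you correctly flag as the main obstacle, your transverse de~Rham-splitting plan is in the Petersen--Wylie spirit but is not the paper's mechanism: the paper goes through the isoparametric structure of $\psi$ from Proposition \ref{Pa1} (Wang's theory applied to equations $\left(\mathrm{ii}\right)$ and $\left(\mathrm{iv}\right)$ of (\ref{n1})), so that the focal variety $M_{\min}=\left[\psi=\min\psi\right]$ is already known to be a smooth, compact, totally geodesic Einstein submanifold containing the Reeb direction, and the rigid structure is assembled from $M_{\min}$ following Petersen--Wylie; you miss this ingredient, though in fairness the paper itself only sketches this implication. Your (i)$\Rightarrow$(iii) matches the paper, which likewise treats it as immediate.
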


\begin{proof}
It is not difficult to observe that the implication (i)$\Rightarrow$(iii).
Once we verify the deduction (iii)$\Rightarrow$(ii)$\Rightarrow$(i), the proof
is accomplished. First, we show the case of (iii)$\Rightarrow$(ii). From the
equations $\left(  \mathrm{i}\right)  $ and $\left(  \mathrm{iii}\right)  $ of
(\ref{n1}), it ensures that the coefficients of $\nabla_{e_{k},\nabla^{T}\psi
}^{2}\nabla^{T}\psi$ could be expressed as%
\[%
\begin{array}
[c]{cll}%
\left(  \left(  \nabla^{T}\right)  _{e_{k},\nabla^{T}\psi}^{2}\nabla^{T}%
\psi \right)  ^{i} & = & \psi_{ij,k}\psi_{j}=-R_{ij,k}\psi_{j}\\
& = & -\left[  \left(  R_{ij}\psi_{j}\right)  _{,k}-R_{ij}\psi_{jk}\right] \\
& = & -\left[  \left(  \frac{1}{2}R_{,i}+\psi_{i}\right)  _{,k}-R_{ij}\left(
2ng_{jk}-R_{jk}\right)  \right] \\
& = & -\frac{1}{2}R_{,ik}-\left(  2ng_{ik}-R_{ik}\right)  +R_{ij}\left(
2ng_{jk}-R_{jk}\right) \\
& = & -\frac{1}{2}R_{,ik}+\left(  R_{ij}-g_{ij}\right)  \left(  2ng_{jk}%
-R_{jk}\right)  .
\end{array}
\]
So we have
\begin{equation}
R\left(  e_{k},\nabla^{T}\psi,\nabla^{T}\psi,e_{i}\right)  =-\frac{1}%
{2}R_{,ik}+\left(  R_{ij}-g_{ij}\right)  \left(  2ng_{jk}-R_{jk}\right)
+R_{ik,j}\psi_{j} \label{a4}%
\end{equation}
and then, by taking the trace, it is obtained that
\begin{equation}%
\begin{array}
[c]{cl}
& \frac{1}{2}R_{,i}\psi_{i}\\
= & Ric\left(  \nabla^{T}\psi,\nabla^{T}\psi \right)  -\left \vert \nabla
^{T}\psi \right \vert ^{2}\\
= &
{\displaystyle \sum \limits_{k}}
R\left(  e_{k},\nabla^{T}\psi,\nabla^{T}\psi,e_{k}\right) \\
= & -\frac{1}{2}\Delta_{B}R+\left(  R_{kj}-g_{kj}\right)  \left(
2ng_{jk}-R_{jk}\right)  +R_{,j}\psi_{j}%
\end{array}
\label{a2}%
\end{equation}
with the help of the equation $\left(  \mathrm{iii}\right)  $ of (\ref{n1})
and the identity
\[
R\left(  \cdot,\cdot \right)  \xi=id\wedge \eta.
\]
Therefore, the equation (\ref{a2}) leads to%
\begin{equation}
\frac{1}{2}\Delta_{B,\psi}R=tr\left(  \left(  Ric_{D}-g^{T}\right)  \left(
2ng^{T}-Ric_{D}\right)  \right)  . \label{a3}%
\end{equation}
For if the conditions of (iii) hold, then (\ref{a3}) suggests the restriction
$Ric_{D}$ of Ricci curvature to the contact bundle $D$ are either $g^{T}$ or
$2ng^{T}$. Thus, combining with such result, the equation (\ref{a4}) becomes
\begin{equation}
R\left(  \cdot,\nabla^{T}\psi \right)  \nabla^{T}\psi=-\left(  \nabla
^{T}\right)  _{\nabla^{T}\psi,\cdot}^{2}\nabla^{T}\psi. \label{a5}%
\end{equation}
In the following, we would claim
\[
R\left(  \cdot,\nabla^{T}\psi,\nabla^{T}\psi,\cdot \right)  =0.
\]
This could be achieved by proving $R\left(  \cdot,\nabla^{T}\psi,\nabla
^{T}\psi,\cdot \right)  $ is symmetric and anti-symmetric. It is clear that
$R\left(  \cdot,\nabla^{T}\psi,\nabla^{T}\psi,\cdot \right)  $ is symmetric.
Hence, it suffices to tackle whether it is anti-symmetric. Denote by $\left \{
E_{j}\right \}  _{j}$ an orthonormal basis of subHessian operator $Hess_{b}%
\psi$ on the contact bundle consists of eigenvectors corresponding to the
eigenvalues $\lambda_{j}$, i.e. $\nabla_{E_{j}}\nabla^{T}\psi=\lambda_{j}%
E_{j}$. Apparently, these eigenvalues lie in the set $\left \{  0,2n-1\right \}
$. By (\ref{a5}), it is easily observed that for the indices $j,k$%
\[%
\begin{array}
[c]{cl}
& -R\left(  E_{j},\nabla^{T}\psi,\nabla^{T}\psi,E_{k}\right) \\
= & g\left(  \left(  \nabla^{T}\right)  _{\nabla^{T}\psi,E_{j}}^{2}\nabla
^{T}\psi,E_{k}\right) \\
= & g\left(  \nabla_{\nabla^{T}\psi}^{T}\nabla_{E_{j}}^{T}\nabla^{T}\psi
,E_{k}\right)  -g\left(  \nabla_{\nabla_{\nabla^{T}\psi}^{T}E_{j}}^{T}%
\nabla^{T}\psi,E_{k}\right) \\
= & \lambda_{j}g\left(  \nabla_{\nabla^{T}\psi}^{T}E_{j},E_{k}\right)
-g\left(  \nabla_{E_{k}}^{T}\nabla^{T}\psi,\nabla_{\nabla^{T}\psi}^{T}%
E_{j}\right) \\
= & g\left(  \nabla_{\nabla^{T}\psi}^{T}E_{j},E_{k}\right)  \left(
\lambda_{j}-\lambda_{k}\right) \\
= & -g\left(  \nabla_{\nabla^{T}\psi}^{T}E_{k},E_{j}\right)  \left(
\lambda_{j}-\lambda_{k}\right) \\
= & R\left(  E_{k},\nabla^{T}\psi,\nabla^{T}\psi,E_{j}\right)
\end{array}
\]
where we utilize the compatibility of $\nabla^{T}$ with $g^{T}$ in the fifth
equality and the fact that the quantity $\left(  \lambda_{j}-\lambda
_{k}\right)  $ is just a constant. Therefore, the part of (iii)$\Rightarrow
$(ii) is completed. As for the case of of (ii)$\Rightarrow$(i), in the proof
of Proposition \ref{Pa1}, we learned that the focal variety $M_{\min}$ is
actually a compact totally geodesic Einstein manifold. Then, by following the
idea of Petersen-Wylie, it could be found that such Sasaki-Ricci soliton is
transversely rigid. Consequently, the equivalence of these three conditions is established.
\end{proof}

\begin{remark}
It is worth to note that the above deduction about the transversely radial
flatness is a little bit different from the one in the course of proof "(iii)
implies (ii)". More precisely, the method adopted by Petersen-Wylie heavily
depends on the fact that Hessian operator of the potential admits the only two
possible eigenvalues $0$ and $\lambda$.
\end{remark}

Combining with Theorem \ref{nt}, it is apparent to derive the following
characterization about the transverse rigidity of Sasaki-Ricci solitons. More
precisely, by the $D$-homothetic transformation, it could be observed that the
existence of a Sasaki-Einstein metric is equivalent to the existence of a
Sasaki $\eta$-Einstein metric of positive transverse scalar curvature (see
\cite{fow09}).

\begin{corollary}
If $\left(  M,g,X\right)  $ is a complete Sasaki-Ricci soliton, then it is
transversely rigid if and only if it is Sasaki-Einstein.
\end{corollary}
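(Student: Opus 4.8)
The plan is to route everything through Theorem~\ref{criteria1}, which already equates transverse rigidity with the tractable condition (iii): constant scalar curvature together with the pinch $g\le Ric\le 2ng$. The reverse implication is then almost free. If $(M,g,X)$ is Sasaki-Einstein then $Ric^{T}=(2n+2)g^{T}$, so on the contact bundle $Ric=Ric^{T}-2g=2ng$, while $Ric(\xi,\xi)=2n$; hence $Ric=2ng$ on all of $TM$, the scalar curvature $R=2n(2n+1)$ is constant, and $g\le Ric\le 2ng$ holds (with equality on the right). The implication (iii)$\Rightarrow$(i) of Theorem~\ref{criteria1} then shows the metric is transversely rigid.

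For the forward implication I would exploit the \emph{Riemannian} lower Ricci bound hidden in (iii). Transverse rigidity gives, via Theorem~\ref{criteria1}, the uniform estimate $Ric\ge g>0$ on the complete $(2n+1)$-manifold---the Reeb direction is harmless, since $Ric(\xi,\xi)=2n\ge 1$---so Bonnet-Myers forces $M$ to be compact. On the other hand, by definition transverse rigidity presents $M$ as a quotient of a product $N\times\mathbb{C}^{k}$ of a Sasaki $\eta$-Einstein factor with a complex Euclidean factor on which the deck group acts orthogonally. Since an orthogonal action on $\mathbb{C}^{k}$ never produces a compact quotient, compactness forces $k=0$ and identifies $M$ with its Sasaki $\eta$-Einstein factor $N$. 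The soliton equation then degenerates ($\frac{1}{2}L_{X}g^{T}=0$) to $Ric^{T}=(2n+2)g^{T}$, and positivity of the scalar curvature---Theorem~\ref{nt}, and in the compact regime at hand the explicit bound in the remark following it---makes the transverse scalar curvature positive, so the $D$-homothetic transformation recorded just above the statement yields a Sasaki-Einstein metric. In fact the surviving transverse Einstein constant is exactly $2n+2$, so the given metric is already Einstein and the $D$-homothety merely confirms what the soliton equation forces.

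The step carrying the real weight is the passage from (iii) to compactness and the consequent elimination of the Euclidean factor. The crucial observation is that (iii) supplies a genuine Riemannian Ricci lower bound, not merely a transverse one, so that Bonnet-Myers applies at all; absent this, one might expect honest transversely rigid shrinkers with a nontrivial $\mathbb{C}^{k}$, exactly as the Gaussian factor persists in the Riemannian theory. Once compactness kills that factor the remainder is bookkeeping: reading off $k=0$ from the orthogonal quotient, and invoking $R>0$ to place the $D$-homothety in the Einstein rather than the merely $\eta$-Einstein regime.
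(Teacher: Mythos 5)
Your argument is correct, but it is organized quite differently from the paper's own proof, which consists entirely of the two sentences preceding the corollary: transverse rigidity is read as saying the soliton is Sasaki $\eta$-Einstein, Theorem~\ref{nt} makes the transverse scalar curvature positive (since $R^{T}=R+2n$), and the $D$-homothetic transformation of \cite{fow09} converts a Sasaki $\eta$-Einstein structure of positive transverse scalar curvature into a Sasaki-Einstein one. The paper thus leaves the elimination of the $\mathbb{C}^{k}$ factor entirely implicit. You supply exactly that missing step: the pinching $g\leq Ric\leq 2ng$ from (i)$\Rightarrow$(iii) of Theorem~\ref{criteria1} is a genuine \emph{Riemannian} Ricci lower bound, Bonnet--Myers yields compactness, and an orthogonal quotient of $N\times\mathbb{C}^{k}$ is noncompact for $k\geq1$ because the orbit distance $\left\vert z\right\vert$ is preserved; hence $k=0$. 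Your route also buys a sharper conclusion than the paper's phrasing: since $\psi=n\left\vert z\right\vert^{2}$ collapses to a constant when $k=0$, the Hamiltonian conditions $\iota_{X}\omega^{T}=i\overline{\partial}_{B}\psi=0$ and $\psi=i\eta\left(X\right)$ force $X$ to be a constant multiple of $\xi$, so $L_{X}g^{T}=0$ and the given metric itself satisfies $Ric^{T}=\left(2n+2\right)g^{T}$, with no metric-changing $D$-homothety required --- this last parenthetical ``$\frac{1}{2}L_{X}g^{T}=0$'' deserves that one line of justification in your write-up, or alternatively integrate $\Delta_{B}\psi=2n\left(\left(2n+2\right)-c\right)$ over the compact manifold.

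One caveat on logical hygiene: your Myers step rests on the implication (i)$\Rightarrow$(iii), which the paper merely asserts as ``not difficult.'' On a rigid model with $k\geq1$, the equality $\left(\mathrm{i}\right)$ of (\ref{n1}) together with $\psi=n\left\vert z\right\vert^{2}$ gives $Ric=0$ along the Euclidean directions, so condition (iii) could only ever hold when $k=0$; your compactness argument is therefore a valid derivation from Theorem~\ref{criteria1} as stated, but slightly circular in spirit, since the factor you kill is already incompatible with the bound you invoke. A more intrinsic way to close this loop is to note that in any Sasakian manifold $K\left(\xi,v\right)=1$ for every horizontal $v$, so the metric can never split off an honest flat Euclidean factor, forcing $k=0$ directly.
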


By the aforementioned result, we are able to derive another type of criterion
for determining when the transverse rigidity holds under the hypothesis of
constant scalar curvature as follows.

\begin{theorem}
\label{criteria2}(= Theorem 2.)A complete Sasaki-Ricci soliton $\left(
M,g,X\right)  $ of constant scalar curvature is Sasaki-Einstein if and only if
the rank of the operator $Ric-g$ is constant.
\end{theorem}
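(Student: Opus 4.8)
The plan is to prove the nontrivial implication by reducing the hypotheses to condition (iii) of Theorem \ref{criteria1} and then invoking that theorem together with the Corollary that identifies transverse rigidity with the Sasaki-Einstein condition. The easy direction is immediate: if $(M,g,X)$ is Sasaki-Einstein then $Ric=2ng$ on all of $TM$, so $Ric-g=(2n-1)g$ has constant rank $2n+1$ and $R=2n(2n+1)$ is constant. So the work is entirely in showing that constant scalar curvature together with constant rank of $Ric-g$ forces $g\le Ric\le 2ng$.

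For that converse, assume $R$ is constant and $\mathrm{rank}(Ric-g)\equiv\rho$. Since $Ric$ preserves $D$ and $Ric\,\xi=2n\xi$, the kernel of $Ric-g$ lies in $D$, and if $\mu_1,\dots,\mu_{2n}$ denote the eigenvalues of $Ric_D$ then the constant-rank hypothesis says that exactly $(2n+1)-\rho$ of them equal $1$ at every point; equivalently, writing $a_i:=\mu_i-1$, exactly $\rho-1$ of the $a_i$ are nonzero everywhere. Constant scalar curvature makes $\Delta_{B,\psi}R=0$, so the trace identity \eqref{a3}, namely $\tfrac12\Delta_{B,\psi}R=\mathrm{tr}\big((Ric_D-g^T)(2ng^T-Ric_D)\big)$, yields $\sum_i(\mu_i-1)(2n-\mu_i)=0$, i.e. $\sum_i a_i^2=(2n-1)\sum_i a_i$. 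On the other hand $R=\sum_i\mu_i+2n$ gives $\sum_i a_i=R-4n$.

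The key step is to pin down the value of $R$ by using the focal set. By Proposition \ref{prop1} the minimum set $M_{\min}=[\psi=\min\psi]$ is nonempty, and by (the proof of) Proposition \ref{Pa1} it is a smooth submanifold on which $Ric=\mathrm{diag}\big(2nI_{\dim M_{\min}},\,I_{(2n+1)-\dim M_{\min}}\big)$; hence $\mathrm{rank}(Ric-g)|_{M_{\min}}=\dim M_{\min}$, and comparison with the constant value forces $\rho=\dim M_{\min}$ and $R=(2n-1)\rho+(2n+1)$. Consequently $\sum_i a_i=R-4n=(2n-1)(\rho-1)$ and $\sum_i a_i^2=(2n-1)^2(\rho-1)$. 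Applying Cauchy-Schwarz to the $\rho-1$ nonzero $a_i$ gives $\big(\sum_i a_i\big)^2\le(\rho-1)\sum_i a_i^2$, which upon substitution becomes the equality $(2n-1)^2(\rho-1)^2\le(2n-1)^2(\rho-1)^2$. Equality in Cauchy-Schwarz forces all nonzero $a_i$ to be equal, necessarily to $2n-1$, so every $\mu_i\in\{1,2n\}$ and therefore $g\le Ric\le 2ng$. This is precisely condition (iii) of Theorem \ref{criteria1}, whence $(M,g,X)$ is transversely rigid, and the Corollary then upgrades this to Sasaki-Einstein.

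I expect the subtle point to be the global matching of two a priori unrelated constants rather than any hard analytic estimate: the eigenvalue-$1$ multiplicity dictated by the rank hypothesis and the value of $R$ dictated by the quantization of Proposition \ref{Pa1} must both be governed by $\rho=\dim M_{\min}$, and it is exactly this coincidence that renders the Cauchy-Schwarz step an equality. Verifying it requires evaluating the rank on the connected focal set and propagating the resulting value of $R$ over all of $M$; this is also where the constant-rank hypothesis is indispensable, since without it the number of nonzero $a_i$ need not equal $\dim M_{\min}-1$, the Cauchy-Schwarz inequality would be strict, and one could have eigenvalues $\mu_i>2n$ balanced by others in $(1,2n)$, leaving $g\le Ric\le 2ng$ out of reach.
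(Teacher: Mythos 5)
Your proposal is correct and takes essentially the same route as the paper's proof: evaluating the constant rank on the focal set $M_{\min}$ of Proposition \ref{Pa1} to get $\rho=\dim M_{\min}$ and $R=(2n-1)\rho+(2n+1)$, then combining the linear trace identity with the quadratic one coming from equation $(\mathrm{v})$ of (\ref{n1}) (equivalently (\ref{a3}) with $\Delta_{B,\psi}R=0$) to force the Ricci eigenvalues into $\{1,2n\}$, and finally invoking Theorem \ref{criteria1}(iii) together with the corollary identifying transverse rigidity with the Sasaki--Einstein condition. The only difference is cosmetic: the paper concludes by computing $\sum_{j=1}^{k}\left(R_{j}-2n\right)^{2}=0$ directly (a completed square), whereas you reach the same conclusion through the equality case of Cauchy--Schwarz, which is the same algebra.
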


\begin{proof}
In virtue of the explicit implication for the necessity, it suffices to
clarify if the operator $Ric-g$ admits constant rank and it is of constant
scalar curvature, then the transverse rigidity holds. Following the notations
in the proof of Proposition \ref{Pa1}, the assumption of the constant rank of
$Ric-g$ gives us the rank is actually $k=\dim M_{\min}$. Hence, we denote by
$R_{1}=2n,R_{2},\cdot \cdot \cdot,R_{k},R_{j}=1$ $\mathrm{for}$ $k+1\leq
j\leq2n+1$, the eigenvalues of the Ricci operator on $M$, and then we get
\begin{equation}
R-\left(  2n+1\right)  =tr\left(  Ric-g\right)  =%
{\displaystyle \sum \limits_{j=1}^{k}}
\left(  R_{j}-1\right)  =\left(
{\displaystyle \sum \limits_{j=1}^{k}}
R_{j}\right)  -k. \label{a6}%
\end{equation}
Combining with the equation $\left(  \mathrm{v}\right)  $ of (\ref{n1}) which
provides the equality
\begin{equation}%
\begin{array}
[c]{cl}
& \left(  2n+1\right)  R-2n\left(  4n+1\right)  =\left \vert Ric_{D}\right \vert
^{2}\\
= &
{\displaystyle \sum \limits_{j=2}^{2n+1}}
R_{j}^{2}=\left(
{\displaystyle \sum \limits_{j=1}^{k}}
R_{j}^{2}\right)  +\left(  2n-k+1\right)  -4n^{2},
\end{array}
\label{a7}%
\end{equation}
these enables us to observe $%
{\displaystyle \sum \limits_{j=1}^{k}}
\left(  R_{j}-2n\right)  ^{2}=0$. So it furnishes that the eigenvalues of the
Ricci operator lie in the interval $\left[  1,2n\right]  $. The theorem is
concluded by applying the criterion as precedes.
\end{proof}

As an application, we shall show the transverse rigidity and the nonexistence
of the Sasaki-Ricci soliton whose constant scalar curvature attains the three
extremal values as follows.

\begin{proposition}
A complete Sasaki-Ricci soliton $\left(  M,g,X\right)  $ with constant scalar
curvature is Sasaki-Einstein when the scalar curvature equals either
$4n^{2}+1$ or $4n^{2}+2n$, i.e. $k=2n,2n+1$. Furthermore, if the scalar
curvature is equal to $4n$, i.e. $k=1$, then there is no complete Sasaki-Ricci soliton.
\end{proposition}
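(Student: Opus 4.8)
The plan is to track the spectrum of the horizontal Ricci operator $Ric_{D}$ and to feed the three prescribed values of $R$ into the criteria already established. Fix a point and let $\nu_{1},\dots,\nu_{2n}$ denote the eigenvalues of $Ric_{D}$ there; because $Ric\left(\xi,\cdot\right)=2n\eta$, the Reeb direction always contributes the eigenvalue $2n$ to the full operator, so that $R=\sum_{i}\nu_{i}+2n$ and hence $\sum_{i}\nu_{i}=R-2n$. Equation $\left(\mathrm{v}\right)$ of (\ref{n1}), in which every derivative of the constant $R$ drops out, furnishes the second moment $\sum_{i}\nu_{i}^{2}=\left\vert Ric_{D}\right\vert^{2}=\left(2n+1\right)R-2n\left(4n+1\right)$. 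Both sums are constants fixed by $k$ via $R=\left(2n-1\right)k+\left(2n+1\right)$.

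I would dispose of the two extreme values by a Cauchy--Schwarz argument. For $k=2n+1$, so $R=4n^{2}+2n$, the two moments are $\sum_{i}\nu_{i}=4n^{2}$ and $\sum_{i}\nu_{i}^{2}=8n^{3}$, whence $\left(\sum_{i}\nu_{i}\right)^{2}=2n\sum_{i}\nu_{i}^{2}$; equality in Cauchy--Schwarz forces $\nu_{1}=\cdots=\nu_{2n}=2n$, so $Ric=2ng$ and the soliton is Sasaki--Einstein. For $k=1$, so $R=4n$, the moments become $\sum_{i}\nu_{i}=2n=\sum_{i}\nu_{i}^{2}$, and equality again forces $\nu_{1}=\cdots=\nu_{2n}=1$. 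Then $g\leq Ric\leq 2ng$ holds, so by Theorem \ref{criteria1} the soliton is transversely rigid and hence Sasaki--Einstein by the corollary above; but a Sasaki--Einstein metric satisfies $Ric=2ng$, giving $R=4n^{2}+2n\neq 4n$ for $n\geq 1$. This contradiction shows that no complete soliton realizes $k=1$.

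The case $k=2n$, $R=4n^{2}+1$, is the step I expect to be the real obstacle, since here Cauchy--Schwarz is strict: by (\ref{a3}) with $R$ constant one still has $\sum_{i}\left(\nu_{i}-1\right)\left(2n-\nu_{i}\right)=0$, yet this single identity is compatible with the split spectrum $\left(2n,\dots,2n,1\right)$ having $2n-1$ entries equal to $2n$ and one equal to $1$, and by itself it does not confine the $\nu_{i}$ to $\left\{1,2n\right\}$. To close the gap I would exploit the transverse K\"{a}hler structure: the soliton equation rewrites as $Hess_{b}\psi=\left(2n+2\right)g^{T}-Ric^{T}$, a difference of two $J$-invariant $\left(1,1\right)$-tensors, so $Hess_{b}\psi$, and therefore $Ric_{D}=2ng^{T}-Hess_{b}\psi$, is $J$-invariant and its eigenspaces on $D$ are even-dimensional. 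Evaluating at the focal submanifold $M_{\min}$ of Proposition \ref{Pa1}, the $2n$-eigenspace of $Ric_{D}$ has dimension $k-1=2n-1$, which is odd; this contradiction rules out $k=2n$ altogether, so the assertion holds there as well. An alternative that sidesteps the parity count is to verify that the rank of $Ric-g$ is constant and then quote Theorem \ref{criteria2}; the difficulty in that route is precisely to propagate the multiplicity of the eigenvalue $1$ of $Ric$ off $M_{\min}$, which is why I favor the $J$-invariance argument.
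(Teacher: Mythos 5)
Your proposal is correct, and it diverges from the paper's proof in two of the three cases. For $R=4n^{2}+2n$ your Cauchy--Schwarz equality argument is the paper's argument in disguise: the paper completes the square in the trace identity (\ref{a8}), $\frac{1}{2}\Delta_{B,\psi}R=-\left\vert Ric_{D}-\frac{R_{D}}{2n}g^{T}\right\vert ^{2}+\frac{1}{2n}\left(  R-4n\right)  \left[  2n\left(  2n+1\right)  -R\right]$, whose vanishing is exactly the equality case of your moment inequality. For $R=4n^{2}+1$ you correctly identify the obstacle, but the paper's trick is one you missed: since $R$ is constant, equation $\left(\mathrm{iii}\right)$ of (\ref{n1}) gives $Ric\left(  \nabla^{T}\psi \right)  =\nabla^{T}\psi$, so one eigenvalue is pinned at $1$; discarding that direction, the two moments (\ref{a6})--(\ref{a7}) yield $\sum_{j=1}^{2n}\left(  R_{j}-2n\right)  ^{2}=0$, pinning the spectrum to $\left(  2n,\dots,2n,1\right)$ and letting Theorem \ref{criteria2} (or Theorem \ref{criteria1}) finish. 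Your substitute --- $\phi$-invariance of $Ric_{D}$ forces even multiplicities, while at $M_{\min}$ (of dimension $k=2n$ by Proposition \ref{Pa1}) the horizontal multiplicities are $2n-1$ and $1$, both odd --- is sound: the even-multiplicity fact is precisely what the paper itself invokes at the end of its final corollary's proof, and the normal line of $M_{\min}$ is indeed horizontal since $\xi$ is tangent to $M_{\min}$. Your version in fact shows the case $k=2n$ is vacuous, which is consistent with (and arguably sharper than) the paper's phrasing, since the paper's conclusion ``Sasaki--Einstein'' with $R=4n^{2}+1$ is equally self-annihilating ($Ric=2ng$ forces $R=4n^{2}+2n$). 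For $R=4n$ the paper derives $Ric^{T}=3g^{T}$, i.e.\ a transversely Gaussian structure, and imports the Fern\'{a}ndez-L\'{o}pez--Garc\'{\i}a-R\'{\i}o contradiction with the finiteness of the fundamental group of a shrinker, whereas you bootstrap through Theorem \ref{criteria1}(iii), the corollary ``transversely rigid iff Sasaki--Einstein,'' and the arithmetic $4n^{2}+2n\neq4n$ for $n\geq1$; this is a legitimate and shorter route given the results already established (one could equally quote Theorem \ref{criteria2}, the rank of $Ric-g$ being constantly $1$ there), though it leans entirely on that corollary, where the paper's topological argument is independent of it.
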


\begin{proof}
With the same notations as above and by the identity (\ref{a3}), we have%
\begin{equation}%
\begin{array}
[c]{cl}
& \frac{1}{2}\Delta_{B,\psi}R\\
= & tr\left(  \left(  Ric_{D}-g^{T}\right)  \left(  2ng^{T}-Ric_{D}\right)
\right) \\
= & -\left \vert Ric_{D}\right \vert ^{2}+\left(  2n+1\right)  R_{D}-4n^{2}\\
= & -\left[  \left \vert Ric_{D}-\frac{R_{D}}{2n}g^{T}\right \vert ^{2}%
+\frac{R_{D}^{2}}{2n}\right]  +\left(  2n+1\right)  R_{D}-4n^{2}\\
= & -\left \vert Ric_{D}-\frac{R_{D}}{2n}g^{T}\right \vert ^{2}+\frac{1}%
{2n}\left(  R-4n\right)  \left[  2n\left(  2n+1\right)  -R\right]
\end{array}
\label{a8}%
\end{equation}
where the equation $R=R_{D}+2n$ is used in the last equality. From this, if
the scalar curvature $R$ equals $4n^{2}+2n$, it could be observed that
\[
Ric^{T}=Ric_{D}+2g^{T}=2\left(  n+1\right)  g^{T}.
\]
So it is Sasaki-Einstein. For the case of the constant scalar curvature
$4n^{2}+1$, it could be computed%
\[%
\begin{array}
[c]{cl}
&
{\displaystyle \sum \limits_{j=1}^{2n}}
\left(  R_{j}-2n\right)  ^{2}\\
= & \left(
{\displaystyle \sum \limits_{j=1}^{2n}}
R_{j}^{2}\right)  -4n\left(  R-1\right)  +8n^{3}\\
= & \left[  \left(  2n+1\right)  R-2n\left(  4n+1\right)  -1+4n^{2}\right]
-4n\left(  R-1\right)  +8n^{3}\\
= & 0
\end{array}
\]
by the equalities (\ref{a6}) and (\ref{a7}). The transverse rigidity is
implied by Theorem \ref{criteria2} (or by Theorem \ref{criteria1}). On the
other hand, when the scalar curvature $R$ is equal to $4n$, with the same
reason in Manuel Fern\'{a}ndez-L\'{o}pez and Eduardo Garc\'{\i}a-R\'{\i}o's
paper, we also reach a contradiction with the finite fundamental group of
$\left(  M,g,X\right)  $. Accordingly, in this case, there does not exist any
complete Sasaki-Ricci soliton.
\end{proof}

\begin{remark}
By the equation (\ref{a8}), when the scalar curvature $R$ equals $4n$, it
could be obtained that%
\[
Ric^{T}=3g^{T}.
\]
In some sense, this provides an evidence of nonexistence of Sasaki-Ricci
solitons in this case.
\end{remark}

In general, the complete Sasaki-Ricci soliton may not be transversely rigid;
nonetheless, we can verify the transverse rigidity of low-dimensional
Sasaki-Ricci soliton through analyzing the behavior of at most four distinct
eigenvalues of Ricci operator. As an application, we show that any complete
low-dimensional Sasaki-Ricci soliton with constant scalar curvature must be Sasaki-Einstein.

\begin{corollary}
(= Corollary 1.)If $\left(  M,g,X\right)  $ is a complete Sasaki-Ricci soliton
of constant scalar curvature with at most four distinct eigenvalues of Ricci
operator, then it is Sasaki-Einstein. As an application, any complete
Sasaki-Ricci soliton of dimension at most seven with constant scalar curvature
must be Sasaki-Einstein.
\end{corollary}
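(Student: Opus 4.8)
The plan is to reduce the statement to the two criteria already established and then to control the Ricci spectrum under the constant scalar curvature hypothesis. By the implication (iii)$\Rightarrow$(i) of Theorem \ref{criteria1}, together with the Corollary that a complete Sasaki-Ricci soliton is transversely rigid if and only if it is Sasaki-Einstein, it suffices to prove the pinching $g\le Ric\le 2ng$; alternatively, by Theorem \ref{criteria2} it is enough to show that $\mathrm{rank}(Ric-g)$ is constant. Either way the problem becomes one about the eigenvalues $\lambda_{1},\dots,\lambda_{2n}$ of the restricted operator $Ric_{D}$, the Reeb direction always contributing the eigenvalue $2n$.

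First I would record the spectral constraints forced by $R=\mathrm{const}$. From $\left(\mathrm{iii}\right)$ of \eqref{n1} with $R_{,i}=0$ one gets $R_{ij}\psi_{,j}=\psi_{,i}$, so on the regular set $\{\nabla^{T}\psi\neq0\}$ the radial direction $\nabla^{T}\psi$ is an eigenvector of $Ric_{D}$ with eigenvalue $1$. The trace identity \eqref{a3}, again with $R$ constant, reads $\sum_{j}(\lambda_{j}-1)(2n-\lambda_{j})=0$, while \eqref{a6}--\eqref{a7} fix the constants $\mathrm{tr}\,Ric_{D}=R-2n$ and $\mathrm{tr}\,Ric_{D}^{2}=(2n+1)R-2n(4n+1)$. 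The decisive feature is that the values $1$ and $2n$ contribute nothing to $\sum_{j}(\lambda_{j}-1)(2n-\lambda_{j})=0$, so only eigenvalues different from $1$ and $2n$ can carry the identity.

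Next I would exploit the bound on the number of distinct eigenvalues. Since $1$ and $2n$ are always present, ``at most four distinct eigenvalues'' leaves at most two further values $a,b$. If at most one such intermediate value occurs, the identity above forces it to equal $2n$ (it cannot be $1$), so the spectrum lies in $\{1,2n\}\subset[1,2n]$ and we conclude. If two intermediate values $a\neq b$ occur, I would argue that the number of distinct eigenvalues is constant on an open dense set, so that there the eigenvalue functions are continuous with locally constant multiplicities; as the eigenvalue $1$ is always present on the regular set, its multiplicity, hence $\mathrm{rank}(Ric-g)$, is constant, and Theorem \ref{criteria2} applies. The \emph{main obstacle} is exactly this two-intermediate-value case: the two moment identities do not by themselves pin $a,b$ down in arbitrary dimension (a balanced pair with $a\in(1,2n)$ and $b\notin[1,2n]$ is numerically consistent with \eqref{a6}--\eqref{a7}), so the delicate step is to propagate multiplicity information across the focal set $M_{\min}$, where $\nabla^{T}\psi$ vanishes and the radial eigenvector degenerates.

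For the low-dimensional application I would invoke the transverse Kähler structure. Since $Ric^{T}$, hence $Ric_{D}=Ric^{T}-2g^{T}$, commutes with the transverse complex structure $J$, every eigenvalue of $Ric_{D}$ has even multiplicity; in particular $1$ already carries the two-dimensional eigenspace $\mathrm{span}\{\nabla^{T}\psi,J\nabla^{T}\psi\}$. When $2n+1\le 7$, i.e. $n\le 3$, the operator $Ric_{D}$ acts on a space of complex dimension at most $3$ and so has at most three distinct eigenvalues; adding the Reeb eigenvalue $2n$ gives at most four distinct eigenvalues of $Ric$, which is precisely the hypothesis of the first part. Concretely, even multiplicity at $M_{\min}$—where $1$ and $2n$ occur on $D$ with multiplicities $(2n+1)-k$ and $k-1$, $k=\dim M_{\min}$—forces $k$ to be \emph{odd}, refining Proposition \ref{Pa1} to the admissible values of $R$; for each of these the fixed moments $\mathrm{tr}\,Ric_{D}$ and $\mathrm{tr}\,Ric_{D}^{2}$ determine the would-be intermediate eigenvalues and show they collapse into $\{1,2n\}$, while the extremal value $R=4n$ ($k=1$) is excluded by the nonexistence part of the preceding Proposition. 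Thus $g\le Ric\le 2ng$ in every admissible case, and the soliton is Sasaki-Einstein.
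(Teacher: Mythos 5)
Your proposal is correct and follows essentially the same route as the paper's proof: the fixed eigenvalues $1$ and $2n$ attached to $\nabla^{T}\psi$ and $\xi$, the moment identities \eqref{a6}--\eqref{a7}, constancy of the multiplicities and hence of the eigenvalues and of the rank of $Ric-g$ by continuity (the Fern\'{a}ndez-L\'{o}pez--Garc\'{\i}a-R\'{\i}o deduction), reduction through Theorem \ref{criteria2}, and even multiplicity of $Ric_{D}$ via its $\phi$-invariance for the dimension-at-most-seven application. The ``main obstacle'' you flag (two intermediate eigenvalues, which the moments alone cannot pin down) is handled in the paper by exactly the continuity/constant-multiplicity argument you sketch---first fix the integer multiplicities, then the eigenvalues as solutions of the resulting fixed line--conic system, then invoke Theorem \ref{criteria2} to collapse the spectrum into $\left\{  1,2n\right\}  $---so nothing essential is missing, though your closing claim that the moments by themselves force the collapse should be routed through that constant-rank step rather than asserted directly.
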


\begin{proof}
With the help of the preceding proposition, may assume the scalar curvature
$\left(  2n-1\right)  k+\left(  2n+1\right)  $ for $k\leq2n$. Then, by
observing that the vector fields $\xi$ and $\nabla^{T}\psi$ occupy the two
seats of dissimilar eigenvalues from the fact that
\[
Ric\left(  \cdot,\xi \right)  =2n\eta
\]
and $\left(  \mathrm{iii}\right)  $ in (\ref{n1}), we assume that there are
two distinct eigenvalues $R_{1}$ and $R_{2}$, different from $1$ and $2n$, of
the Ricci operator with corresponding multiplicities $k_{1}$ and $k_{2}$
without loss of generality. Following the same deduction of Manuel
Fern\'{a}ndez-L\'{o}pez and Eduardo Garc\'{\i}a-R\'{\i}o, it could be derived
that the eigenvalues $R_{1}$ and $R_{2}$ are both constant. This says that
such Sasaki-Ricci soliton is actually transverse rigid by Theorem
\ref{criteria2}. More precisely, from the equalities (\ref{a6}) and
(\ref{a7}), the eigenvalues $R_{1}$ and $R_{2}$ solve the system of equations%
\[
\left \{
\begin{array}
[c]{l}%
k_{1}R_{1}+k_{2}R_{2}=R-2n-1\\
k_{1}R_{1}^{2}+k_{2}R_{2}^{2}=\left(  2n+1\right)  R-8n^{2}-2n-1
\end{array}
.\right.
\]
Due to the continuity of the eigenvalues of the Ricci operator, it ensures the
multiplicities $k_{1}$ and $k_{2}$ are constant, and subsequently both
eigenvalues $R_{1}$ and $R_{2}$ are also constant. The rest shall be
immediately achieved by the fact the operator $Ric_{D}$ admits even
multiplicity which obtained from%
\[
Ric_{D}\left(  \phi,\phi \right)  =Ric_{D}.
\]

\end{proof}

\section{\bigskip Sasaki-Ricci Solitons with Harmonic Weyl Tensor}

The classification of complete Sasaki-Ricci solitons with harmonic Weyl tensor
will be proven in this section. We first present the following lemma, which
will play an important role in the proof of Proposition \ref{proposition2}.

\begin{lemma}
If $\left(  M,g,X\right)  $ is a complete Sasaki-Ricci soliton with the
Hamiltonian potential $\psi$ with respect to $X$, then%
\begin{equation}
\Delta_{B,\psi}R_{jl}=4n(R_{jl}-g_{jl})-2\sum_{p,q=1}^{2n}R_{pq}R_{jplq}.
\label{lem1}%
\end{equation}

\end{lemma}

\begin{proof}
From the second Bianchi identity and
\begin{equation}%
\begin{array}
[c]{cl}
& R_{i0j0}:=R(e_{i},\xi,e_{j},\xi)=-R(e_{i},\xi,\xi,e_{j})\\
= & -\langle R(e_{i},\xi)e_{j},\xi \rangle \\
= & -\langle g(\xi,e_{j})e_{i}-g(e_{i},e_{j})\xi,\xi \rangle \\
= & g(e_{i},e_{j})=\delta_{ij},
\end{array}
\label{SR3}%
\end{equation}
we derive
\begin{equation}%
\begin{array}
[c]{cl}
& \sum_{i=1}^{2n}R_{ijkl,i}=\sum_{i=1}^{2n}R_{ijil,k}-\sum_{i=1}%
^{2n}R_{ijik,l}\\
= & \sum_{\alpha=1}^{2n+1}R_{\alpha j\alpha l,k}-R_{0j0l,k}-\left(
\sum_{\alpha=1}^{2n+1}R_{\alpha j\alpha k,l}-R_{0j0k,l}\right) \\
= & (R_{jl}-g_{jl})_{,k}-(R_{jk}-g_{jk})_{,l}\\
= & R_{jl,k}-R_{jk,l}.
\end{array}
\label{lem2}%
\end{equation}
Then by
\begin{equation}
R_{ij}=2ng_{ij}-\psi_{ij}, \label{SR9}%
\end{equation}
\ref{lem2} and the Ricci identity, we get the equality
\begin{equation}
\sum_{i=1}^{2n}R_{ijkl,i}=R_{jl,k}-R_{jk,l}=\psi_{jk,l}-\psi_{jl,k}=\sum
_{i=1}^{2n}\psi_{i}R_{ijkl}. \label{lem3}%
\end{equation}
The Ricci identity implies%
\begin{equation}
R_{klpj,ip}=R_{klpj,pi}+\sum_{q=1}^{2n}(R_{qlpj}R_{qkip}+R_{kqpj}%
R_{qlip}+R_{klqj}R_{qpip}+R_{klpq}R_{qjip}). \label{lem10}%
\end{equation}
Similarly,%
\begin{equation}
R_{klpi,jp}=R_{klpi,pj}+\sum_{q=1}^{2n}(R_{qlpi}R_{qkjp}+R_{kqpi}%
R_{qljp}+R_{klqi}R_{qpjp}+R_{klpq}R_{qijp}). \label{lem11}%
\end{equation}
By (\ref{lem10}), (\ref{lem11}) and the second Bianchi identity, we obtain%
\begin{equation}%
\begin{array}
[c]{cl}
& \Delta_{B}R_{ijkl}=\sum_{p=1}^{2n}R_{ijkl,pp}\\
= & \sum_{p=1}^{2n}(R_{klpj,ip}-R_{klpi,jp})\\
= & \sum_{p=1}^{2n}(R_{klpj,pi}-R_{klpi,pj})\\
& +\sum_{p,q=1}^{2n}(R_{qlpj}R_{qkip}+R_{kqpj}R_{qlip}+R_{klqj}R_{qpip}%
+R_{klpq}R_{qjip})\\
& -\sum_{p,q=1}^{2n}(R_{qlpi}R_{qkjp}+R_{kqpi}R_{qljp}+R_{klqi}R_{qpjp}%
+R_{klpq}R_{qijp})\\
\triangleq & I_{1}+(I_{2}+I_{3}+I_{4}+I_{5})-(I_{6}+I_{7}+I_{8}+I_{9}).
\end{array}
\label{lem12}%
\end{equation}
From (\ref{lem3}), we derive
\begin{equation}%
\begin{array}
[c]{ccl}%
I_{1} & = & \sum_{p=1}^{2n}\left[  (R_{pjkl,p})_{,i}-(R_{klpi,p})_{,j}\right]
\\
& = & \sum_{p=1}^{2n}[(\psi_{p}R_{pjkl})_{,i}-(\psi_{p}R_{pikl})_{,j}]\\
& = & \sum_{p=1}^{2n}(R_{klpj,i}\psi_{p}-R_{klpi,j}\psi_{p}+R_{klpj}\psi
_{pi}-R_{klpi}\psi_{pj}).
\end{array}
\label{lem13}%
\end{equation}
By the identity (\ref{SR9}),
\begin{equation}%
\begin{array}
[c]{cl}
& R_{klpj}\psi_{pi}-R_{klpi}\psi_{pj}\\
= & R_{klpj}(2ng_{pi}-R_{pi})-R_{klpi}(2ng_{pj}-R_{pj})\\
= & 4nR_{klij}+R_{klpi}R_{pj}-R_{klpj}R_{pi}.
\end{array}
\label{lem14}%
\end{equation}
Using the second Bianchi identity again,%
\begin{equation}%
\begin{array}
[c]{cl}
& R_{klpj,i}\psi_{p}-R_{klpi,j}\psi_{p}\\
= & (R_{klpj,i}-R_{klpi,j})\psi_{p}\\
= & R_{klij,p}\psi_{p}=R_{ijkl,p}\psi_{p}.
\end{array}
\label{lem15}%
\end{equation}
Substituting (\ref{lem14}) and (\ref{lem15}) into (\ref{lem13}),%
\begin{equation}
I_{1}=\sum_{p=1}^{2n}R_{ijkl,p}\psi_{p}+4nR_{ijkl}+\sum_{p=1}^{2n}%
R_{klpi}R_{pj}-\sum_{p=1}^{2n}R_{klpj}R_{pi}. \label{lem16}%
\end{equation}
Direct computation gives us%
\begin{equation}
I_{2}-I_{7}=2\sum_{p,q=1}^{2n}R_{qlpj}R_{qkip}\text{ }\mathrm{and}\text{
}I_{3}-I_{6}=2\sum_{p,q=1}^{2n}R_{kqpj}R_{qlip}. \label{lem17}%
\end{equation}
The first Bianchi identity implies%
\begin{equation}
I_{5}-I_{9}=\sum_{p,q=1}^{2n}R_{klpq}(R_{qjip}-R_{qijp})=\sum_{p,q=1}%
^{2n}R_{klpq}R_{qpij}. \label{lem18}%
\end{equation}
By the equality (\ref{SR3}),%
\[
\sum_{p=1}^{2n}R_{qpip}=\sum_{\alpha=1}^{2n+1}R_{q\alpha i\alpha}%
-R_{q0i0}=R_{qi}-g_{qi}.
\]
It follows%
\begin{equation}
I_{4}-I_{8}=\sum_{q=1}^{2n}R_{klqj}(R_{qi}-g_{qi})-\sum_{q=1}^{2n}%
R_{klqi}(R_{qj}-g_{qj}). \label{lem20}%
\end{equation}
Substituting (\ref{lem16})-(\ref{lem20}) into (\ref{lem12}),%
\[%
\begin{array}
[c]{cl}
& \Delta_{B}R_{ijkl}\\
= & \sum_{p=1}^{2n}R_{ijkl,p}\psi_{p}+4nR_{ijkl}+\sum_{p=1}^{2n}R_{klpi}%
R_{pj}-\sum_{p=1}^{2n}R_{klpj}R_{pi}\\
& +2\sum_{p,q=1}^{2n}R_{qlpj}R_{qkip}+2\sum_{p,q=1}^{2n}R_{kqpj}R_{qlip}%
+\sum_{p,q=1}^{2n}R_{klpq}R_{qpij}\\
& +\sum_{q=1}^{2n}R_{klqj}(R_{qi}-g_{qi})-\sum_{q=1}^{2n}R_{klqi}%
(R_{qj}-g_{qj}).
\end{array}
\]
Notice that%
\[
4nR_{ijkl}-\sum_{q=1}^{2n}R_{klqj}g_{qi}+\sum_{q=1}^{2n}R_{klqi}%
g_{qj}=(4n-2)R_{ijkl}%
\]
and%
\[
\sum_{p=1}^{2n}R_{klpi}R_{pj}-\sum_{q=1}^{2n}R_{klqi}R_{qj}=0\text{
}\mathrm{and}\text{ }\sum_{q=1}^{2n}R_{klqj}R_{qi}-\sum_{p=1}^{2n}%
R_{klpj}R_{pi}=0.
\]
Thus we have%
\[%
\begin{array}
[c]{ccl}%
\Delta_{B}R_{ijkl} & = & \sum_{p=1}^{2n}R_{ijkl,p}\psi_{p}+(4n-2)R_{ijkl}\\
&  & +2\sum_{p,q=1}^{2n}(R_{iplq}R_{jpkq}-R_{ipkq}R_{jplq})-\sum_{p,q=1}%
^{2n}R_{pqij}R_{pqkl}.
\end{array}
\]
This implies%
\begin{equation}%
\begin{array}
[c]{cl}
& \Delta_{B,\psi}R_{ijkl}\\
= & \Delta_{B}R_{ijkl}-\langle \nabla \psi,\nabla R_{ijkl}\rangle \\
= & \Delta_{B}R_{ijkl}-\sum_{p=1}^{2n}R_{ijkl,p}\psi_{p}\\
= & (4n-2)R_{ijkl}+2\sum_{p,q=1}^{2n}(R_{iplq}R_{jpkq}-R_{ipkq}R_{jplq})\\
& -\sum_{p,q=1}^{2n}R_{pqij}R_{pqkl}.
\end{array}
\label{lem23}%
\end{equation}
From the last equality, we get%
\[%
\begin{array}
[c]{cl}
& \Delta_{B,\psi}\left(  \sum_{i=1}^{2n}R_{ijil}\right) \\
= & (4n-2)\sum_{i=1}^{2n}R_{ijil}+2\sum_{i,p,q=1}^{2n}(R_{iplq}R_{jpiq}%
-R_{ipiq}R_{jplq})\\
& -\sum_{i,p,q=1}^{2n}R_{pqij}R_{pqil}\\
= & (4n-2)(R_{jl}-g_{jl})+2\sum_{i,p,q=1}^{2n}R_{iplq}R_{jpiq}\\
& -2\sum_{p,q=1}^{2n}(R_{pq}-g_{pq})R_{jplq}-\sum_{i,p,q=1}^{2n}%
R_{pqij}R_{pqil}.
\end{array}
\]
By observing that%
\[
2\sum_{p,q=1}^{2n}g_{pq}R_{jplq}=2\sum_{p=1}^{2n}R_{jplp}=2(R_{jl}-g_{jl})
\]
and%
\[
\Delta_{B,\psi}R_{jl}=\Delta_{B,\psi}\left(  \sum_{i=1}^{2n}R_{ijil}%
+R_{0j0l}\right)  =\Delta_{B,\psi}\left(  \sum_{i=1}^{2n}R_{ijil}\right)  ,
\]
we obtain%
\begin{equation}%
\begin{array}
[c]{ccl}%
\Delta_{B,\psi}R_{jl} & = & 4n(R_{jl}-g_{jl})+2\sum_{i,p,q=1}^{2n}%
R_{iplq}R_{jpiq}\\
&  & -2\sum_{p,q=1}^{2n}R_{pq}R_{jplq}-\sum_{i,p,q=1}^{2n}R_{ijpq}R_{pqil}.
\end{array}
\label{lem25}%
\end{equation}
With the help of the first Bianchi identity, we obtain%
\begin{equation}%
\begin{array}
[c]{cl}
& 2\sum_{i,p,q=1}^{2n}R_{iplq}R_{jpiq}-\sum_{i,p,q=1}^{2n}R_{ijpq}R_{pqil}\\
= & 2\sum_{i,p,q=1}^{2n}R_{pilq}R_{jipq}+\sum_{i,p,q=1}^{2n}R_{jipq}R_{pqil}\\
= & \sum_{i,p,q=1}^{2n}R_{jipq}(2R_{pilq}+R_{pqil})\\
= & \sum_{i,p,q=1}^{2n}R_{jipq}(2R_{pilq}-R_{pilq}-R_{plqi})\\
= & \sum_{i,p,q=1}^{2n}(R_{jipq}R_{pilq}-R_{jipq}R_{plqi}).
\end{array}
\label{lem26}%
\end{equation}

From the equality%
\[
\sum_{i,p,q=1}^{2n}R_{jipq}R_{plqi}=\sum_{i,p,q=1}^{2n}R_{jiqp}R_{qlpi}%
=-\sum_{i,p,q=1}^{2n}R_{jipq}R_{piql},
\]
the equation (\ref{lem26}) says%
\begin{equation}
2\sum_{i,p,q=1}^{2n}R_{iplq}R_{jpiq}-\sum_{i,p,q=1}^{2n}R_{ijpq}R_{pqil}%
=\sum_{i,p,q=1}^{2n}R_{jipq}(R_{pilq}+R_{piql})=0. \label{lem27}%
\end{equation}
Combining (\ref{lem25}) with (\ref{lem27}), we get%
\[
\Delta_{B,\psi}R_{jl}=4n(R_{jl}-g_{jl})-2\sum_{p,q=1}^{2n}R_{pq}R_{jplq}.
\]
Therefore, the lemma follows.
\end{proof}

Now we are able to establish an integral bound of the Ricci curvature for any
gradient shrinking Sasaki-Ricci soliton as follows.

\begin{proposition}
\label{proposition1}If $\left(  M,g,X\right)  $ is a complete Sasaki-Ricci
soliton with the Hamiltonian potential $\psi$ with respect to $X$, then we
have
\[
\int_{M}|Ric|^{2}e^{-\lambda \psi}<\infty
\]
for any $\lambda>0$.
\end{proposition}

\begin{proof}
From the equality
\begin{equation}
\sum_{i=1}^{2n}R_{ii}=\sum_{i=1}^{2n}Ric(e_{i},e_{i})=R^{T}-4n=R-2n
\label{SR6}%
\end{equation}
and (\ref{SR9}), we obtain
\begin{equation}%
\begin{array}
[c]{ccl}%
\sum_{i,j=1}^{2n}R_{ij}^{2} & = & \sum_{i,j=1}^{2n}R_{ij}(2ng_{ij}-\psi
_{ij})\\
& = & 2n\sum_{i=1}^{2n}R_{ii}-\sum_{i,j=1}^{2n}R_{ij}\psi_{ij}\\
& = & 2n(R-2n)-\sum_{i,j=1}^{2n}R_{ij}\psi_{ij}.
\end{array}
\label{SR17}%
\end{equation}
Then by
\begin{equation}
R_{00}=Ric(\xi,\xi)=\sum_{i=1}^{2n}R(e_{i},\xi,e_{i},\xi)+R_{0000}=2n,
\label{SR4}%
\end{equation}%
\begin{equation}%
\begin{array}
[c]{ccl}%
R_{i0} & = & \sum_{\alpha=1}^{2n+1}R_{i\alpha0\alpha}=\sum_{j=1}^{2n}%
R_{ij0j}+R_{i000}\\
& = & -\sum_{j=1}^{2n}R_{j0ij}=-\sum_{j=1}^{2n}\langle R(e_{j},\xi)e_{j}%
,e_{i}\rangle \\
& = & -\sum_{j=1}^{2n}\langle g(\xi,e_{j})e_{j}-g(e_{j},e_{j})\xi,e_{i}%
\rangle \\
& = & 0,
\end{array}
\label{SR4b}%
\end{equation}
and (\ref{SR17}), the identity%
\begin{equation}%
\begin{array}
[c]{ccc}%
|Ric|^{2} & = & \sum_{i,j=1}^{2n}R_{ij}^{2}+2\sum_{i=1}^{2n}R_{i0}^{2}%
+R_{00}^{2}\\
& = & \sum_{i,j=1}^{2n}R_{ij}^{2}+4n^{2}\\
& = & 2nR-\sum_{i,j=1}^{2n}R_{ij}\psi_{ij}%
\end{array}
\label{SR18}%
\end{equation}
holds. Let $\phi$ be a cut-off function on $M$. The above equality
(\ref{SR18}) implies%
\begin{equation}%
\begin{array}
[c]{cl}
& \int_{M}|Ric|^{2}e^{-\lambda \psi}\phi^{2}\\
= & 2n\int_{M}Re^{-\lambda \psi}\phi^{2}-\sum_{i,j=1}^{2n}\int_{M}R_{ij}%
\psi_{ij}e^{-\lambda \psi}\phi^{2}\\
= & 2n\int_{M}Re^{-\lambda \psi}\phi^{2}+\sum_{i,j=1}^{2n}\int_{M}\psi
_{i}(R_{ij}e^{-\lambda \psi}\phi^{2})_{,j}\\
= & 2n\int_{M}Re^{-\lambda \psi}\phi^{2}+\sum_{i,j=1}^{2n}\int_{M}\psi
_{i}(R_{ij}e^{-\psi})_{,j}e^{(1-\lambda)\psi}\cdot \phi^{2}\\
& +\sum_{i,j=1}^{2n}\int_{M}\psi_{i}R_{ij}e^{-\lambda \psi}(1-\lambda)\psi
_{j}\phi^{2}+\sum_{i,j=1}^{2n}\int_{M}\psi_{i}R_{ij}e^{-\lambda \psi}(\phi
^{2})_{j}.
\end{array}
\label{SR19a}%
\end{equation}
By the equality (\ref{lem3}) in the proof of the lemma as precedes, we get
\[
\sum_{j-1}^{2n}(R_{jl,j}-R_{jj,l})=\sum_{i,j=1}^{2n}\psi_{i}R_{ijjl}.
\]
It follows%
\[%
\begin{array}
[c]{ccl}%
\sum_{j=1}^{2n}R_{jl,j}-(R-2n)_{,l} & = & -\sum_{i=1}^{2n}\psi_{i}%
(R_{il}-R_{i0l0})\\
& = & -\sum_{i=1}^{2n}\psi_{i}R_{il}+\psi_{l}.
\end{array}
\]
Namely,%
\begin{equation}
\sum_{j=1}^{2n}R_{jl,j}-R_{,l}=-\sum_{i=1}^{2n}\psi_{i}R_{il}+\psi_{l}.
\label{SR11}%
\end{equation}
From the second Bianchi identity, we derive that
\begin{equation}
R_{ijkj,p}+R_{ijjp,k}+R_{ijpk,j}=0. \label{SR12a}%
\end{equation}
Notice that%
\begin{equation}
\sum_{j=1}^{2n}R_{ijkj,p}=\sum_{\alpha=1}^{2n+1}R_{i\alpha k\alpha
,p}-R_{i0k0,p}=R_{ik,p} \label{SR12b1}%
\end{equation}
and%
\begin{equation}
\sum_{j=1}^{2n}R_{ijjp,k}=-\sum_{\alpha=1}^{2n+1}R_{i\alpha p\alpha
,k}+R_{i0p0,k}=-R_{ip,k}. \label{SR12b2}%
\end{equation}
Thus by (\ref{SR12a}), (\ref{SR12b1}) and (\ref{SR12b2}), we get
$R_{ik,p}-R_{ip,k}+\sum_{j=1}^{2n}R_{ijpk,j}=0$; therefore,
\[
\sum_{i=1}^{2n}R_{ii,p}-\sum_{i=1}^{2n}R_{ip,i}+\sum_{i,j=1}^{2n}%
R_{ijpi,j}=0.
\]
Namely,%
\begin{equation}
R_{,l}=\sum_{i=1}^{2n}(R_{ii}+R_{00})_{,l}=\sum_{i=1}^{2n}R_{ii,l}=2\sum
_{j=1}^{2n}R_{lj,j}. \label{SR14}%
\end{equation}
Combining (\ref{SR11}) with (\ref{SR14}), it could be observed
\begin{equation}
R_{,l}=2\sum_{i=1}^{2n}\psi_{i}R_{il}-2\psi_{l}. \label{SR15}%
\end{equation}
Hence by (\ref{SR14}) and (\ref{SR15}), we derive%
\begin{equation}%
\begin{array}
[c]{ccl}%
\sum_{i=1}^{2n}\nabla_{i}(R_{ij}e^{-\psi}) & = & \sum_{i=1}^{2n}%
R_{ij,i}e^{-\psi}-\sum_{i=1}^{2n}R_{ij}e^{-\psi}\psi_{i}\\
& = & e^{-\psi}\left(  \sum_{i=1}^{2n}R_{ij,i}-\sum_{i=1}^{2n}\psi_{i}%
R_{ij}\right) \\
& = & e^{-\psi}(\frac{1}{2}R_{,j}-\sum_{i=1}^{2n}\psi_{i}R_{ij})\\
& = & -e^{-\psi}\psi_{j}.
\end{array}
\label{SR16}%
\end{equation}
Sustituting this equality into (\ref{SR19a}), we derive%
\begin{equation}%
\begin{array}
[c]{cl}
& \int_{M}|Ric|^{2}e^{-\lambda \psi}\phi^{2}\\
= & 2n\int_{M}Re^{-\lambda \psi}\phi^{2}+(1-\lambda)\sum_{i,j=1}^{2n}\int
_{M}R_{ij}\psi_{i}\psi_{j}e^{-\lambda \psi}\phi^{2}\\
& -\int_{M}|\nabla \psi|^{2}e^{-\lambda \psi}\phi^{2}+\sum_{i,j=1}^{2n}\int
_{M}\psi_{i}R_{ij}e^{-\lambda \psi}(\phi^{2})_{j}.
\end{array}
\label{SR19}%
\end{equation}
By (\ref{SR6}), (\ref{SR18}) and the Cauchy-Schwarz inequality, we deduce%
\begin{equation}%
\begin{array}
[c]{cl}
& 2n\int_{M}Re^{-\lambda \psi}\phi^{2}\\
= & 2n\int_{M}(\sum_{i}R_{ii}+2n)e^{-\lambda \psi}\phi^{2}\\
\leq & \frac{1}{8n}\int_{M}(\sum_{i}R_{ii})^{2}e^{-\lambda \psi}\phi^{2}%
+8n^{3}\int_{M}e^{-\lambda \psi}\phi^{2}+4n^{2}\int_{M}e^{-\lambda \psi}\phi
^{2}\\
\leq & \frac{1}{4}\int_{M}|Ric|^{2}e^{-\lambda \psi}\phi^{2}+(8n+3)n^{2}%
\int_{M}e^{-\lambda \psi}\phi^{2}.
\end{array}
\label{SR21a}%
\end{equation}
An easy algebraic manipulation gives us%
\begin{equation}%
\begin{array}
[c]{cl}
& (1-\lambda)\sum_{i,j}\int_{M}R_{ij}\psi_{i}\psi_{j}e^{-\lambda \psi}\phi
^{2}\\
\leq & \frac{1}{4}\int_{M}(|Ric|^{2}-4n^{2})e^{-\lambda \psi}\phi
^{2}+(1-\lambda)^{2}\int_{M}|\nabla \psi|^{4}e^{-\lambda \psi}\phi^{2}%
\end{array}
\label{SR21}%
\end{equation}
and%
\begin{equation}%
\begin{array}
[c]{cl}
& \sum_{i,j}\int_{M}\psi_{i}R_{ij}e^{-\lambda \psi}(\phi^{2})_{j}\\
\leq & \frac{1}{4}\int_{M}(|Ric|^{2}-4n^{2})e^{-\lambda \psi}\phi^{2}+4\int
_{M}|\nabla \psi|^{2}e^{-\lambda \psi}|\nabla \phi|^{2}.
\end{array}
\label{SR21c}%
\end{equation}
From the equations (\ref{SR19}) to (\ref{SR21c}), we have%
\begin{equation}%
\begin{array}
[c]{cl}
& \int_{M}|Ric|^{2}e^{-\lambda \psi}\phi^{2}\\
\leq & 4(8n+1)n^{2}\int_{M}e^{-\lambda \psi}\phi^{2}+4(1-\lambda)^{2}\int
_{M}|\nabla \psi|^{4}e^{-\lambda \psi}\phi^{2}\\
& +16\int_{M}|\nabla \psi|^{2}e^{-\lambda \psi}|\nabla \phi|^{2}.
\end{array}
\label{SR21d}%
\end{equation}
It follows from Lemma \ref{potential estimate} by taking $C_{1}=0$, we have%
\begin{equation}
R+|\nabla \psi|^{2}=(4n-2)\psi. \label{SR22}%
\end{equation}
By Proposition \ref{prop1}, the potential estimate%
\begin{equation}
n\left(  d\left(  x,y\right)  -7\right)  _{+}^{2}\leq \psi \left(  x\right)
\leq n\left(  d\left(  x,y\right)  +\sqrt{3}\right)  ^{2} \label{SR23}%
\end{equation}
holds where $y$ is a minimum point of $\psi$. From the potential estimate, for
any $\mu>0$, it gives the inequality%
\begin{equation}%
\begin{array}
[c]{ccl}%
\int_{M}e^{-\mu \psi} & = & \sum_{j=0}^{\infty}\int_{B_{(j+1)r}\backslash
B_{jr}}e^{-\mu \psi}\\
& \leq & \sum_{j=0}^{\infty}e^{-\mu n(jr-7)^{2}}\cdot \mathrm{Vol}%
(B_{(j+1)r})\\
& \leq & \sum_{j=0}^{\infty}Ce^{-\mu n(jr-7)^{2}}(j+1)^{d}r^{d}<\infty.
\end{array}
\label{SGRS11}%
\end{equation}
The formula (6.1)(v) in \cite{cll25} implies that the scalar curvature $R$
satisfies%
\begin{equation}
R\geq \frac{C_{3}}{\psi} \label{SR24}%
\end{equation}
for some positive constant $C_{3}$. From (\ref{SR22})-(\ref{SR24}), we find
that%
\[
\int_{M}|\nabla \psi|^{4}e^{-\lambda \psi}<\infty \quad \mathrm{and}\quad \int
_{M}|\nabla \psi|^{2}e^{-\lambda \psi}<\infty.
\]
Hence by (\ref{SR21d}), we conclude that%
\[
\int_{M}|Ric|^{2}e^{-\lambda \psi}<\infty.
\]
This completes the proof.
\end{proof}

\begin{proposition}
\label{proposition2}Under the assumption that $\left(  M,g,X\right)  $ is a
complete Sasaki-Ricci soliton with the Hamiltonian potential $\psi$ with
respect to $X$ and $\int_{M}|Rm|^{2}e^{-\lambda \psi}<\infty$ for some
$\lambda \in \left(  0,1\right)  $, then the following estimate holds%
\[
\int_{M}|{\nabla Ric}|^{2}e^{-\psi}=\int_{M}|\operatorname{div}^{T}%
\,Rm|^{2}e^{-\psi}<\infty.
\]

\end{proposition}

\begin{proof}
By (\ref{lem3}), we obtain%
\begin{equation}
\sum_{i=1}^{2n}\nabla_{i}\left(  R_{ijkl}e^{-\psi}\right)  =e^{-\psi}\left(
\sum_{i=1}^{2n}R_{ijkl,i}-\sum_{i=1}^{2n}\psi_{i}R_{ijkl}\right)  =0.
\label{SRS3}%
\end{equation}
The second Bianchi identity implies that%
\begin{equation}%
\begin{array}
[c]{ccl}%
R_{ij,0} & = & \sum_{\alpha=1}^{2n+1}R_{i\alpha j\alpha,0}=\sum_{k=1}%
^{2n}R_{ikjk,0}+R_{i0j0,0}\\
& = & \sum_{k=1}^{2n}R_{ikjk,0}=-\sum_{k=1}^{2n}(R_{ikk0,j}+R_{ik0j,k})\\
& = & 0.
\end{array}
\label{SRS3b}%
\end{equation}
Combining (\ref{SR4}), (\ref{SR4b}), (\ref{SR9}) with (\ref{SRS3b}), it
follows%
\begin{equation}%
\begin{array}
[c]{cl}
& |\nabla Ric|^{2}=\sum_{\alpha,\beta,\gamma=1}^{2n+1}|\nabla_{\gamma
}R_{\alpha \beta}|^{2}\\
= & \sum_{\gamma=1}^{2n+1}\sum_{i,j=1}^{2n}|\nabla_{\gamma}R_{ij}|^{2}%
+2\sum_{\gamma=1}^{2n+1}\sum_{i=1}^{2n}|\nabla_{\gamma}R_{i0}|^{2}%
+\sum_{\gamma=1}^{2n+1}|\nabla_{\gamma}R_{00}|^{2}\\
= & \sum_{\gamma=1}^{2n+1}\sum_{i,j=1}^{2n}|\nabla_{\gamma}R_{ij}|^{2}%
=\sum_{i,j,k=1}^{2n}|\nabla_{k}R_{ij}|^{2}=\sum_{i,j=1}^{2n}|\nabla^{T}%
R_{ij}|^{2}.
\end{array}
\label{SRS5}%
\end{equation}
Let $B_{r}$ be the closed geodesic ball of $M$. Let $\phi$ be a smooth cut-off
function on $M$ such that $\phi=1$ on $B_{r},\phi=0$ outside $B_{2r}$ and
$|\nabla \phi \leq \frac{C}{r}$ on $B_{2r}\backslash B_{r}$. Notice that%
\[
\Delta_{B,\psi}R_{ij}=\Delta_{B}R_{ij}-\langle \nabla \psi,\nabla R_{ij}%
\rangle=e^{\psi}\operatorname{div}^{T}\left(  e^{-\psi}\nabla^{T}%
R_{ij}\right)
\]
and%
\begin{equation}%
\begin{array}
[c]{cl}
& \operatorname{div}^{T}\left(  e^{-\psi}\nabla^{T}R_{ij}\cdot R_{ij}\phi
^{2}\right) \\
= & \operatorname{div}^{T}\left(  e^{-\psi}\nabla^{T}R_{ij}\right)  R_{ij}%
\phi^{2}+\langle e^{-\psi}\nabla^{T}R_{ij},\nabla^{T}(R_{ij}\phi^{2})\rangle \\
= & (\Delta_{B,\psi}R_{ij})R_{ij}e^{-\psi}\phi^{2}+e^{-\psi}|\nabla^{T}%
R_{ij}|^{2}\phi^{2}+e^{-\psi}R_{ij}\langle \nabla^{T}R_{ij},\nabla^{T}\phi
^{2}\rangle.
\end{array}
\label{SRS7}%
\end{equation}
Then, from (\ref{SRS5}) and (\ref{SRS7}), we have%
\[
\int_{M}|\nabla Ric|^{2}e^{-\psi}\phi^{2}=-\sum_{i,j=1}^{2n}\int_{M}%
(\Delta_{B,\psi}R_{ij})R_{ij}e^{-\psi}\phi^{2}-\sum_{i,j,k=1}^{2n}\int
_{M}(\nabla_{k}R_{ij})R_{ij}e^{-\psi}(\phi^{2})_{k}.
\]
Hence by the equality (\ref{lem1}), we obtain%
\begin{equation}%
\begin{array}
[c]{cl}
& \int_{M}|\nabla Ric|^{2}e^{-\psi}\phi^{2}\\
= & -\sum_{i,j=1}^{2n}\int_{M}\left(  4n(R_{ij}-g_{ij})-2\sum_{p,q=1}%
^{2n}R_{pq}R_{ipjq}\right)  R_{ij}e^{-\psi}\phi^{2}\\
& -\sum_{i,j,k=1}^{2n}\int_{M}(\nabla_{k}R_{ij})R_{ij}e^{-\psi}(\phi^{2}%
)_{k}\\
= & -4n\sum_{i,j=1}^{2n}\int_{M}R_{ij}^{2}e^{-\psi}\phi^{2}+4n\sum
_{i,j=1}^{2n}\int_{M}g_{ij}R_{ij}e^{-\psi}\phi^{2}\\
& +2\sum_{i,j,p,q=1}^{2n}\int_{M}R_{ipjq}R_{ij}R_{pq}e^{-\psi}\phi^{2}%
-\sum_{i,j,k=1}^{2n}\int_{M}(\nabla_{k}R_{ij})R_{ij}e^{-\psi}(\phi^{2})_{k}.
\end{array}
\label{SRS8}%
\end{equation}
The equalities (\ref{SR3}) and (\ref{SR9}) imply that%
\[%
\begin{array}
[c]{cl}
& \sum_{i,j,p,q=1}^{2n}R_{ipjq}R_{ij}R_{pq}\\
= & \sum_{i,j,p,q=1}^{2n}R_{ipjq}R_{ij}(2ng_{pq}-\psi_{pq})\\
= & 2n\sum_{i,j,p=1}^{2n}R_{ij}R_{ipjp}-\sum_{i,j,p,q=1}^{2n}R_{ipjq}%
R_{ij}\psi_{pq}\\
= & 2n\sum_{i,j=1}^{2n}R_{ij}(R_{ij}-g_{ij})-\sum_{i,j,p,q=1}^{2n}%
R_{ipjq}R_{ij}\psi_{pq}\\
= & 2n\sum_{i,j=1}^{2n}R_{ij}^{2}-2n\sum_{i,j=1}^{2n}g_{ij}R_{ij}%
-\sum_{i,j,p,q=1}^{2n}R_{ipjq}R_{ij}\psi_{pq}.
\end{array}
\]
It follows%
\begin{equation}%
\begin{array}
[c]{cl}
& 2\sum_{i,j,p,q=1}^{2n}\int_{M}R_{ipjq}R_{ij}R_{pq}e^{-\psi}\phi^{2}\\
= & 4n\sum_{i,j=1}^{2n}\int_{M}R_{ij}^{2}e^{-\psi}\phi^{2}-4n\sum_{i,j=1}%
^{2n}\int_{M}g_{ij}R_{ij}e^{-\psi}\phi^{2}\\
& -2\sum_{i,j,p,q=1}^{2n}\int_{M}R_{ipjq}R_{ij}\psi_{pq}e^{-\psi}\phi^{2}.
\end{array}
\label{SRS9}%
\end{equation}
From (\ref{SRS8}) and (\ref{SRS9}), we derive that%
\begin{equation}%
\begin{array}
[c]{ccl}%
\int_{M}|\nabla Ric|^{2}e^{-\psi}\phi^{2} & = & -2\sum_{i,j,p,q=1}^{2n}%
\int_{M}R_{ipjq}R_{ij}\psi_{pq}e^{-\psi}\phi^{2}\\
&  & -\sum_{i,j,k=1}^{2n}\int_{M}(\nabla_{k}R_{ij})R_{ij}e^{-\psi}(\phi
^{2})_{k}.
\end{array}
\label{SRS10}%
\end{equation}
Integrating by parts and the equality (\ref{SRS3}) give us%
\begin{equation}%
\begin{array}
[c]{cl}
& -2\sum_{i,j,p,q=1}^{2n}\int_{M}R_{ipjq}R_{ij}\psi_{pq}e^{-\psi}\phi^{2}\\
= & 2\sum_{i,j,p,q=1}^{2n}\int_{M}\nabla_{q}(R_{qjpi}e^{-\psi})R_{ij}\phi
^{2}\psi_{p}+2\sum_{i,j,p,q=1}^{2n}\int_{M}\nabla_{q}(R_{ij}\phi^{2}%
)R_{ipjq}e^{-\psi}\psi_{p}\\
= & 2\sum_{i,j,p,q=1}^{2n}\int_{M}R_{ipjq}(\nabla_{q}R_{ij})\psi_{p}e^{-\psi
}\phi^{2}+2\sum_{i,j,p,q=1}^{2n}\int_{M}R_{ipjq}R_{ij}\psi_{p}e^{-\psi}%
(\phi^{2})_{q}.
\end{array}
\label{SRS11}%
\end{equation}
By the equality (\ref{lem3}), we get%
\begin{equation}%
\begin{array}
[c]{cl}
& \left(  \operatorname{div}^{T}Rm\right)  _{\beta \gamma \delta}=\sum
_{\alpha=1}^{2n+1}R_{\alpha \beta \gamma \delta,\alpha}=\sum_{i=1}^{2n}%
R_{i\beta \gamma \delta,i}+R_{0\beta \gamma \delta,0}\\
= & \sum_{i=1}^{2n}R_{i\beta \gamma \delta,i}=\sum_{i=1}^{2n}R_{ijkl,i}%
=(\operatorname{div}^{T}Rm)_{jkl}=\sum_{i=1}^{2n}\psi_{i}R_{ijkl}.
\end{array}
\label{SGRS9}%
\end{equation}
The direct computation gives%
\begin{equation}%
\begin{array}
[c]{cl}
& 2\sum_{i,j,p,q=1}^{2n}R_{ipjq}(\nabla_{q}R_{ij})\psi_{p}\\
= & -2\sum_{i,j,p,q=1}^{2n}R_{qjip}\psi_{p}(\nabla_{q}R_{ij})\\
= & -2\sum_{i,j,p,q=1}^{2n}R_{jqip}\psi_{p}(\nabla_{j}R_{iq})\\
= & 2\sum_{i,j,p,q=1}^{2n}R_{qjip}\psi_{p}(\nabla_{j}R_{iq})\\
= & \sum_{i,j,p,q=1}^{2n}R_{qjip}\psi_{p}(\nabla_{j}R_{iq}-\nabla_{q}R_{ij}).
\end{array}
\label{SRS12}%
\end{equation}
Then from the equations (\ref{lem3}), (\ref{SGRS9}) and (\ref{SRS12}), we
deduce%
\begin{equation}%
\begin{array}
[c]{cl}
& 2\sum_{i,j,p,q=1}^{2n}R_{ipjq}(\nabla_{q}R_{ij})\psi_{p}\\
= & \sum_{i,j,p,q,h=1}^{2n}R_{qjip}\psi_{p}\psi_{h}R_{hijq}\\
= & \sum_{i,j,q=1}^{2n}\left(  (\operatorname{div}^{T}Rm)_{ijq}\right)  ^{2}\\
= & \sum_{\beta,\gamma,\delta=1}^{2n+1}((\operatorname{div}^{T}\,Rm)_{\beta
\gamma \delta})^{2}\\
= & |\operatorname{div}^{T}Rm|^{2}.
\end{array}
\label{SRS12b}%
\end{equation}
Substituting the equalities (\ref{SRS11}) and (\ref{SRS12b}) into the equation
(\ref{SRS10}), it could be derived that%
\begin{equation}%
\begin{array}
[c]{cl}
& \int_{M}|\nabla Ric|^{2}e^{-\psi}\phi^{2}\\
= & \int_{M}|\operatorname{div}^{T}Rm|^{2}e^{-\psi}\phi^{2}+2\sum
_{i,j,p,q=1}^{2n}\int_{M}R_{ipjq}R_{ij}\psi_{p}e^{-\psi}(\phi^{2})_{q}\\
& -\sum_{i,j,k=1}^{2n}\int_{M}(\nabla_{k}R_{ij})R_{ij}e^{-\psi}(\phi^{2})_{k}.
\end{array}
\label{SRS13}%
\end{equation}
Notice that%
\[%
\begin{array}
[c]{cl}
& \int_{M}|\operatorname{div}^{T}Rm|^{2}e^{-\psi}\phi^{2}\\
\leq & C\int_{M}|Rm|^{2}|\nabla \psi|^{2}e^{-\psi}\\
\leq & C\int_{M}|Rm|^{2}\psi e^{-\psi}\\
\leq & C\int_{M}|Rm|^{2}e^{-\lambda \psi}<\infty
\end{array}
\]
and%
\[%
\begin{array}
[c]{cl}
& 2\sum_{i,j,p,q=1}^{2n}\int_{M}R_{ipjq}R_{ij}\psi_{p}e^{-\psi}(\phi^{2}%
)_{q}\\
\leq & C\int_{M}|Rm|^{2}|\nabla \psi|e^{-\psi}\\
\leq & C\int_{M}|Rm|^{2}e^{-\lambda \psi}<\infty.
\end{array}
\]
Since%
\[%
\begin{array}
[c]{ccl}%
\int_{M}|\nabla Ric|^{2}e^{-\psi}\phi^{2} & \leq & C-\sum_{i,j,k=1}^{2n}%
\int_{M}(\nabla_{k}R_{ij})R_{ij}e^{-\psi}(\phi^{2})_{k}\\
& \leq & C+2\sum_{i,j,k=1}^{2n}\int_{M}|\nabla_{k}R_{ij}||R_{ij}|e^{-\psi}%
\phi|\nabla \phi|\\
& \leq & C+\frac{1}{2}\int_{M}|\nabla Ric|^{2}e^{-\psi}\phi^{2}+2\int
_{M}|Ric|^{2}e^{-\psi}|\nabla \phi|^{2},
\end{array}
\]
therefore by using Proposition \ref{proposition1}, we can conclude that
$\int_{M}|\nabla Ric|^{2}e^{-\psi}<\infty$. By H\"{o}lder inequality, we
derive that as $r\rightarrow \infty$%
\begin{equation}%
\begin{array}
[c]{cl}
& \left \vert \sum_{i,j,k=1}^{2n}\int_{M}(\nabla_{k}R_{ij})R_{ij}e^{-\psi}%
(\phi^{2})_{k}\right \vert \\
\leq & \frac{C}{r}\left(  \int_{M}|\nabla Ric|^{2}e^{-\psi}\right)  ^{\frac
{1}{2}}\left(  \int_{B_{2r}\backslash B_{r}}|Ric|^{2}e^{-\psi}\right)
^{\frac{1}{2}}\rightarrow0
\end{array}
\label{SRS19}%
\end{equation}
and%
\begin{equation}
\left \vert \sum_{i,j,p,q=1}^{2n}\int_{M}R_{ipjq}R_{ij}\psi_{p}e^{-\psi}%
(\phi^{2})_{q}\right \vert \leq \frac{C}{r}\int_{B_{2r}\backslash B_{r}}%
|Rm|^{2}e^{-\lambda \psi}\rightarrow0. \label{SRS20}%
\end{equation}
Combining these inequalities (\ref{SRS13}), (\ref{SRS19}) with (\ref{SRS20}),
it follows%
\[
\int_{M}|{\nabla Ric}|^{2}e^{-\psi}=\int_{M}|\operatorname{div}^{T}%
Rm|^{2}e^{-\psi}<\infty.
\]

\end{proof}

By the same reasoning, we then obtain the following corollary.

\begin{corollary}
If $\left(  M,g,X\right)  $ is a complete Sasaki-Ricci soliton with the
Hamiltonian potential $\psi$ with respect to $X$ and it admits harmonic Weyl
tensor, then we have%
\begin{equation}
\int_{M}|{\nabla Ric}|^{2}e^{-\psi}=\int_{M}|\operatorname{div}^{T}%
\,Rm|^{2}e^{-\psi}<\infty. \label{SR20}%
\end{equation}

\end{corollary}

\begin{proof}
For any $X,Y,Z\in \Gamma(D)$, we have%
\begin{equation}%
\begin{array}
[c]{cl}
& (\nabla_{X}\mathrm{Hess}\left(  \psi \right)  )(Y,Z)-(\nabla_{Y}%
\mathrm{Hess}\left(  \psi \right)  )(X,Z)\\
= & \nabla_{X}(\mathrm{Hess}\left(  \psi \right)  (Y,Z))-\nabla_{Y}%
(\mathrm{Hess}\left(  \psi \right)  (X,Z))\\
& +\mathrm{Hess}\left(  \psi \right)  (\nabla_{Y}X-\nabla_{X}Y,Z)\\
& +\mathrm{Hess}\left(  \psi \right)  (X,\nabla_{Y}Z)-\mathrm{Hess}\left(
\psi \right)  (Y,\nabla_{X}Z).
\end{array}
\label{SGRS1}%
\end{equation}
Notice that%
\begin{equation}
\left \{
\begin{array}
[c]{l}%
\nabla_{X}(\mathrm{Hess}\left(  \psi \right)  (Y,Z))=\nabla_{X}\langle
\nabla_{Y}\nabla \psi,Z\rangle=\langle \nabla_{X}\nabla_{Y}\nabla \psi
,Z\rangle+\langle \nabla_{Y}\nabla \psi,\nabla_{X}Z\rangle,\\
\nabla_{Y}(\mathrm{Hess}\left(  \psi \right)  (X,Z))=\nabla_{Y}\langle
\nabla_{X}\nabla \psi,Z\rangle=\langle \nabla_{Y}\nabla_{X}\nabla \psi
,Z\rangle+\langle \nabla_{X}\nabla \psi,\nabla_{Y}Z\rangle
\end{array}
\right.  \label{SGRS2}%
\end{equation}
and%
\begin{equation}
\psi(X,\nabla_{Y}Z)-\mathrm{Hess}\left(  \psi \right)  (Y,\nabla_{X}%
Z)=\langle \nabla_{X}\nabla \psi,\nabla_{Y}Z\rangle-\langle \nabla_{Y}\nabla
\psi,\nabla_{X}Z\rangle. \label{SGRS3}%
\end{equation}
Substituting the identities (\ref{SGRS2}) and (\ref{SGRS3}) into
(\ref{SGRS1}), we get%
\begin{equation}
(\nabla_{X}\mathrm{Hess}\left(  \psi \right)  )(Y,Z)-(\nabla_{Y}\mathrm{Hess}%
\left(  \psi \right)  )(X,Z)=\langle R(X,Y)\nabla \psi,Z\rangle. \label{SGRS4}%
\end{equation}
If the Weyl tensor is harmonic, then the Schouten tensor $S\doteqdot
Ric-\frac{R}{4n}g$ is a Codazzi tensor. It yields that $(\nabla_{X}%
S)(Y,Z)=(\nabla_{Y}S)(X,Z)$. By noting that $S=Ric-\frac{R}{4n}g=Ric^{T}%
-2g-\frac{R}{4n}g$, we derive%
\[
\nabla_{X}S=\nabla_{X}Ric^{T}-\frac{\nabla_{X}R}{4n}g.
\]
It follows that%
\begin{equation}
(\nabla_{X}Ric^{T})(Y,Z)-(\nabla_{Y}Ric^{T})(X,Z)=\frac{\nabla_{X}R}%
{4n}g(Y,Z)-\frac{\nabla_{Y}R}{4n}g(X,Z). \label{SGRS5}%
\end{equation}
Combining equations (\ref{SGRS4}) and (\ref{SGRS5}) with the Sasaki-Ricci
soliton equation, we have%
\begin{equation}%
\begin{array}
[c]{cl}
& (2n+2)(\nabla_{X}g^{T})(Y,Z)-(2n+2)(\nabla_{Y}g^{T})(X,Z)\\
= & \frac{\nabla_{X}R}{4n}g(Y,Z)-\frac{\nabla_{Y}R}{4n}g(X,Z)+\langle
R(X,Y)\nabla \psi,Z\rangle.
\end{array}
\label{SGRS6}%
\end{equation}
A direct computation yields%
\[
\nabla_{X}g^{T}=\nabla_{X}g-\nabla_{X}(\eta \otimes \eta)=0.
\]
Therefore the equation (\ref{SGRS6}) becomes%
\begin{equation}
R(X,Y,Z,\nabla \psi)=\langle R(X,Y)\nabla \psi,Z\rangle=\frac{\nabla_{Y}R}%
{4n}g(X,Z)-\frac{\nabla_{X}R}{4n}g(Y,Z). \label{SGRS7}%
\end{equation}
Using the identity (\ref{SR15}), the last equation can be written as
\begin{equation}%
\begin{array}
[c]{cl}
& R(X,Y,Z,\nabla \psi)\\
= & \frac{Ric(Y,\nabla \psi)-g(Y,\nabla \psi)}{2n}g(X,Z)-\frac{Ric(X,\nabla
\psi)-g(X,\nabla \psi)}{2n}g(Y,Z).
\end{array}
\label{SGRS8}%
\end{equation}
Taking $Z=\nabla \psi$, we obtain%
\[
Ric(Y,\nabla \psi)g(X,\nabla \psi)=Ric(X,\nabla \psi)g(Y,\nabla \psi).
\]
If we consider the vector field $Y$ that is perpendicular to $\nabla \psi$,
then by the equation (\ref{SR9}), for every $X\in \Gamma(D)$,%
\[
0=Ric(Y,\nabla \psi)g(X,\nabla \psi)=\mathrm{Hess}(\psi)(Y,\nabla \psi
)g(X,\nabla \psi).
\]
Thus $\nabla \psi$ is an eigenvalue of $Ric$ and $\mathrm{Hess}(\psi)$. From
(\ref{SGRS11}), (\ref{SGRS9}), (\ref{SGRS8}) and Proposition
\ref{proposition1}, we derive%
\[%
\begin{array}
[c]{ccl}%
\int_{M}|\operatorname{div}^{T}Rm|^{2}e^{-\psi} & \leq & C\int_{M}%
|Ric|^{2}|\nabla \psi|^{2}e^{-\psi}+C\int_{M}|\nabla \psi|^{2}e^{-\psi}\\
& \leq & C\int_{M}|Ric|^{2}e^{-\mu \psi}+C\int_{M}\psi e^{-\psi}\\
& \leq & C+C\int_{M}e^{-\mu \psi}<\infty
\end{array}
\]
where $\mu \in(0,1)$ is a constant. Moreover, we obtain, as $r\rightarrow
\infty$,%
\begin{equation}%
\begin{array}
[c]{cl}
& \sum_{i,j,p,q=1}^{2n}\int_{M}R_{ipjq}R_{ij}\psi_{p}e^{-\psi}(\phi^{2})_{q}\\
\leq & \frac{c}{r}\left(  \int_{M}|\operatorname{div}^{T}Rm|^{2}e^{-\psi}%
+\int_{M}|Ric|^{2}e^{-\psi}\right) \\
\leq & \frac{C}{r}\rightarrow0,
\end{array}
\label{SGRS13}%
\end{equation}
then, combining (\ref{SRS13}), (\ref{SRS19}) with (\ref{SGRS13}), then the
proof is completed.
\end{proof}

Now we are in a position to prove Theorem \ref{harWT} as follows.

\begin{proof}
(Proof of Theorem \ref{harWT}) Let $\{E_{1},\cdot \cdot \cdot,E_{2n}\}
\subset \Gamma(D)$ be the eigenvectors of $Ric$ with $g\left(  E_{i}%
,E_{j}\right)  =\delta_{ij}$ and $E_{2n}=\frac{\nabla \psi}{|\nabla \psi|}$.
Then employing the equations (\ref{lem3}) and (\ref{SGRS8}), we get%
\begin{equation}%
\begin{array}
[c]{ccl}%
\, \left \vert Rm\right \vert ^{2} & = & \sum_{j,k,l=1}^{2n}|(\operatorname{div}%
^{T}Rm)(E_{j},E_{k},E_{l})|^{2}\\
& = & \sum_{j,k,l=1}^{2n}|R(\nabla \psi,E_{j},E_{k},E_{l})|^{2}\\
& = & \sum_{k,l=1}^{2n}|R(\nabla \psi,E_{k},E_{k},E_{l})|^{2}+\sum_{k,l=1}%
^{2n}|R(\nabla \psi,E_{l},E_{k},E_{l})|^{2}\\
&  & +\sum_{k,l=1}^{2n}\sum_{j\neq k,j\neq l}|R(\nabla \psi,E_{j},E_{k}%
,E_{l})|^{2}\\
& = & 2\sum_{k,l=1}^{2n}|R(\nabla \psi,E_{k},E_{k},E_{l})|^{2}\\
& = & \frac{1}{n}\sum_{l=1}^{2n}|Ric(E_{l},\nabla \psi)-g(E_{l},\nabla
\psi)|^{2}\\
& = & \frac{1}{4n}|\nabla R|^{2}.
\end{array}
\label{SGRS14}%
\end{equation}
Applying the Cauchy-Schwarz inequality and the equation (\ref{SRS5}), we
obtain%
\begin{equation}%
\begin{array}
[c]{ccl}%
|\nabla Ric|^{2} & = & \sum_{i,j,k=1}^{2n}(R_{ij,k})^{2}\\
& \geq & \sum_{i,k=1}^{2n}(R_{ii,k})^{2}\\
& \geq & \frac{1}{2n}\sum_{k=1}^{2n}\left(  \sum_{i=1}^{2n}R_{ii,k}\right)
^{2}\\
& = & \frac{1}{2n}\sum_{k=1}^{2n}\left[  (R-2n)_{,k}\right]  ^{2}\\
& = & \frac{1}{2n}|\nabla R|^{2}.
\end{array}
\label{SGRS15}%
\end{equation}
From the equalities (\ref{SR20}), (\ref{SGRS14}) and the inequality
(\ref{SGRS15}), we deduce the estimate%
\[
\frac{1}{4n}\int_{M}|\nabla R|^{2}e^{-\psi}\geq \frac{1}{2n}\int_{M}|\nabla
R|^{2}e^{-\psi}.
\]
This forces that the scalar curvature $R$ is constant. Therefore by the
equation (\ref{SR15}), we derive that%
\begin{equation}
Ric(\nabla \psi,\nabla \psi)-|\nabla \psi|^{2}=\frac{1}{2}\langle \nabla
R,\nabla \psi \rangle=0. \label{SGRS16}%
\end{equation}
From the identities (\ref{SGRS8}) and (\ref{SGRS16}), we get that the
transversely radial curvature vanishes as below:
\[%
\begin{array}
[c]{ccl}%
\kappa_{rad}^{T} & = & \sum_{i=1}^{2n}R(E_{i},\nabla \psi,E_{i},\nabla \psi)\\
& = & \sum_{i=1}^{2n-1}\frac{1}{2n}\left[  Ric(\nabla \psi,\nabla \psi
)-|\nabla \psi|^{2}\right]  g(E_{i},E_{i})\\
& = & \frac{2n-1}{2n}\left[  Ric(\nabla \psi,\nabla \psi)-|\nabla \psi
|^{2}\right] \\
& = & 0.
\end{array}
\]
Then by Theorem 1 in \cite{cll25}, $(M,g,\psi)$ is actually transversely
rigid. Therefore $M$ is Sasaki-Einstein and a compact gradient shrinking Ricci
soliton. Then by Theorem 2.5 in \cite{ms13}, $M$ is a finite quotient of the
sphere $\mathbb{S}^{2n+1}$.
\end{proof}

\section{Statements and Declarations}

\textbf{Data availability }Data sharing not applicable to this article as no
datasets were generated or analysed during the current study.

\textbf{Conflict of interest} The authors have no financial or personal
relationships with other people or organizations that can inappropriately
influence the submitted work, there is no professional or other personal
interest of any nature or kind in any product, service and/or company that
could be construed as influencing the position presented in, or the review of,
the manuscript entitled.

\textbf{Ethics approval} The manuscript has not been submitted to more than
one journal for simultaneous consideration. The submitted work is original and
has not been published elsewhere in any form or language (partially or in full).


\begin{thebibliography}{99999999999999}                                                                                   %


\bibitem[Ba65]{ba65}Barden, D., Simply connected five-manifolds, Ann. of Math.
(2) 82 (1965), 365-385

\bibitem[BBJ21]{bbj21}Berman, Robert J.; Boucksom, S\'{e}bastien; Jonsson,
Mattias, A variational approach to the Yau-Tian-Donaldson conjecture. J. Amer.
Math. Soc. 34 (2021), no. 3, 605-652

\bibitem[BCCD24]{bccd24}Richard H. Bamler, Charles Cifarelli, Ronan J. Conlon
\& Alix Deruelle, A New Complete Two-Dimensional Shrinking Gradient
K\"{a}hler-Ricci Soliton, Geom. Funct. Anal. 34 (2024), 377-392

\bibitem[Be01]{be01}Belgun, Florin Alexandru, Normal CR structures on compact
3-manifolds. Math. Z. 238 (2001), no. 3, 441-460

\bibitem[BG00]{bg00}C. P. Boyer and K. Galicki, On Sasakian-Einstein geometry,
Internat. J. Math., 11, 873-909

\bibitem[BG08]{bg08}C. P. Boyer and K. Galicki, Sasaki Geometry, Oxford
Mathematical Monographs, Oxford University Press, Oxford (2008)

\bibitem[BGS08]{bgs08}C. P. Boyer, K. Galicki and S. Simanca, Canonical
Sasakian metrics, Comm. Math. Phys. 279 (2008), no. 3, 705-733

\bibitem[Bl10]{bl10}Blair, David E. Riemannian geometry of contact and
symplectic manifolds. Second edition. Progress in Mathematics, 203.
Birkh\"{a}user Boston, Ltd., Boston, MA, 2010. xvi+343 pp

\bibitem[BM87]{bm87}S. Bando and T. Mabuchi, Uniqueness of Einstein K\"{a}hler
metrics modulo connected group actions, in Algebraic geometry (Sendai 1985),
Adv. Stud. Pure Math. 10, North-Holland, Amsterdam (1987), 11-40

\bibitem[BM15]{bm15}J. Bernstein and T. Mettler, Two-dimensional gradient
Ricci solitons revisited, Int. Math. Res. Not. 1 (2015), 78-98

\bibitem[Br05]{br05}R. L. Bryant, Ricci flow solitons in dimension three with
SO(3)-symmetries, http://www.math.duke.edu/bryant/3DRotSymRicciSolitons.pdf (2005)

\bibitem[Bre10]{bre10}S. Brendle, Einstein manifolds with nonnegative
isotropic curvature are locally symmetric, Duke Math. J. 151 (2010), 1-21

\bibitem[BW58]{bw58}W. M. Boothby and H. C. Wang, On contact manifolds, Ann.
of Math. (2) 68 (1958), 721-734

\bibitem[Cao85]{cao85}H.-D. Cao, Deformation of K\"{a}hler metrics to
K\"{a}hler-Einstein metrics on compact K\"{a}hler manifolds, Invent. Math.
81(1985), 359-372

\bibitem[Cao96]{cao96}H.-D. Cao, Existence of gradient K\"{a}hler-Ricci
solitons, in: Elliptic and parabolic methods in geometry (Minneapolis 1994), A
K Peters, Wellesley (1996), 1-16

\bibitem[Cao97]{cao97}H.-D. Cao, Limits of solutions to the K\"{a}hler-Ricci
flow. J. Differential Geom. 45 (2) (1997), 257-272

\bibitem[Cao10]{cao10}H.-D. Cao, Recent progress on Ricci solitons, in: Recent
advances in geometric analysis, Adv. Lect. Math. (ALM) 11, International
Press, Somerville (2010), 1-38

\bibitem[CC09]{cc09}S.-C. Chang and H.-L. Chiu, Nonnegativity of CR Paneitz
operator and its application to the CR Obata's theorem. (English summary) J.
Geom. Anal. 19 (2009), no. 2, 261-287

\bibitem[CCD22]{ccd22}C. Cifarelli, R. J. Conlon, A. Deruelle, On finite time
Type-I singularities of the K\"{a}hler-Ricci flow on compact K\"{a}hler
surfaces, J. Eur. Math. Soc. (2024), published online first

\bibitem[CCHLW]{cchlw}D.-C. Chang, S.-C. Chang, Y. Han, C. Lin and C.-T. Wu,
Gradient Shrinking Sasaki-Ricci Solitons on Sasakian Manifolds of Dimension Up
to Seven, arXiv:2210.12702

\bibitem[CCZ08]{ccz08}H.-D. Cao, B.-L. Chen and X.-P. Zhu, Recent developments
on Hamilton's Ricci flow, in: Surveys in differential geometry. Vol. XII.
Geometric flows, Surv. Differ. Geom. 12, International Press, Somerville
(2008), 47-112

\bibitem[CD10]{cd10}H.-D. Cao, D.-T. Zhou, On complete gradient shrinking
Ricci solitons, J. Differential Geom. 85(2) (2010), 175-186

\bibitem[CDS15.1]{cds15.1}X. Chen, Simon Donaldson, and Song Sun,
K\"{a}hler-Einstein metrics on Fano manifolds I, J. Amer. Math. Soc. 28
(2015), no. 1, 183-197

\bibitem[CDS15.2]{cds15.2}X. Chen, Simon Donaldson, and Song Sun,
K\"{a}hler-Einstein metrics on Fano manifolds II, J. Amer. Math. Soc. 28
(2015), no. 1, 199-234

\bibitem[CDS15.3]{cds15.3}X. Chen, Simon Donaldson, and Song Sun,
K\"{a}hler-Einstein metrics on Fano manifolds III, J. Amer. Math. Soc. 28
(2015), no. 1, 235-278

\bibitem[CDS24]{cds24}Ronan J. Conlon, Alix Deruelle and Song Sun,
Classification results for expanding and shrinking gradient K\"{a}hler-Ricci
solitons, Geometry \& Topology 28:1 (2024), 267-351

\bibitem[Che09]{che09}B.-L. Chen, Strong uniqueness of the Ricci flow, J.
Differential Geom. 82 (2009), no. 2, 363-382

\bibitem[CHLW]{chlw}S.-C. Chang, Y. Han, C. Lin and C.-T. Wu, Convergence of
the Sasaki-Ricci Flow on Sasakian $5$-Manifolds of General Type, arXiv:2203.00374

\bibitem[Cho23]{cho23}Bennett Chow, Ricci Solitons in Low Dimensions, Graduate
Studies in Mathematics 235, American Mathematical Society, Providence, RI (2023)

\bibitem[Chow et al. 10]{chowetal10}Bennett Chow, Sun-Chin Chu, David
Glickenstein, Christine Guenther, James Isenberg, Tom Ivey, Dan Knopf, Peng
Lu, Feng Luo, and Lei Ni, The Ricci Flow: Techniques and Applications: Part
III: Geometric-analytic Aspects. Mathematical Surveys and Monographs, Vol.
163. AMS, Providence (2010)

\bibitem[Chow et al. 15]{chowetal15}Bennett Chow, Sun-Chin Chu, David
Glickenstein, Christine Guenther, James Isenberg, Tom Ivey, Dan Knopf, Peng
Lu, Feng Luo, and Lei Ni, The Ricci Flow: Techniques and Applications: Part
IV: Long-Time Solutions and Related Topics, Mathematical Surveys and
Monographs, Vol. 206. AMS, Providence (2015)

\bibitem[CJ15]{cj15}T. Collins and A. Jacob, On the convergence of the
Sasaki-Ricci flow, Analysis, complex geometry, and mathematical physics: in
honor of Duong H. Phong, 11-21, Contemp. Math., 644, Amer. Math. Soc.,
Providence, RI, 2015

\bibitem[CLL25]{cll25}S.-C. Chang, F. Li, C. Lin, Geometry of shrinking
Sasaki-Ricci solitons I: Fundamental equations and characterization of
rigidity, arXiv: 2502.16148v1

\bibitem[CLLW]{cllw}S.-C. Chang, F. Li, C. Lin and C.-T. Wu, On the Existence
of Conic Sasaki-Einstein Metrics on Log Fano Sasakian Manifolds of Dimension
Five, preprint

\bibitem[CLN06]{cln06}B. Chow, P. Lu and L. Ni, Hamilton's Ricci flow, Grad.
Stud. Math. 77, American Mathematical Society, Providence, 2006

\bibitem[CLW]{clw}S.-C. Chang, C. Lin and C.-T. Wu, Foliation divisorial
contraction by the Sasaki-Ricci flow on Sasakian $5$-manifolds, preprint

\bibitem[CLY11]{cly11}B. Chow, P. Lu and B. Yang, A lower bound for the scalar
curvature of noncompact nonflat Ricci shrinkers, C. R. Math. Acad. Sci. Paris
349 (2011), 1265-1267

\bibitem[Co16]{co16}T. Collins, Stability and convergence of the Sasaki-Ricci
flow, J. reine angew. Math. 716 (2016), 1-27

\bibitem[CSW18]{csw18}X, Chen, S. Sun and B. Wang, K\"{a}hler-Ricci flow,
K\"{a}hler-Einstein metric, and K-stability, Topol. 22 (2018) 3145-3173

\bibitem[CWZ11]{cwz11}X. Cao, B. Wang and Z. Zhang, On locally conformally
flat gradient shrinking Ricci solitons, Commun. Contemp. Math. 13 (2011), no.
2, 269-282

\bibitem[CZ00]{cz00}B.-L. Chen and X.-P. Zhu, Complete Riemannian manifolds
with pointwise pinched curvature. Invent. math. 140 (2000), 423-452

\bibitem[CZ06]{cz06}H.-D. Cao and X.-P. Zhu. A Complete Proof of the
Poincar\'{e} and Geometrization Conjectures-application of the
Hamilton-Perelman theory of the Ricci flow. Asian J. Math. 10 (2) (2006), 165-492

\bibitem[CZ10]{cz10}H.-D. Cao and D. Zhou, On complete gradient shrinking
Ricci solitons, J. Differential Geom. 85 (2010), no. 2, 175-185

\bibitem[CZ18]{cz18}T. Collins and G. Szekelyhidi, K-semistability for
irregular Sasakian manifolds, J. Differential Geometry 109 (2018) 81-109

\bibitem[CZ19]{cz19}T. Collins and G. Szekelyhidi, Sasaki-Einstein metrics and
K-stability, Geom. Topol. 23 (2019), no. 3, 1339-1413

\bibitem[CZ23]{cz23}Xu Cheng and Detang Zhou, Rigidity of four-dimensional
gradient shrinking Ricci solitons, J. reine angew. Math. 802 (2023), 255-274

\bibitem[D02]{d02}S. K. Donaldson, Scalar curvature and stability of toric
varieties, J. Differential Geom. 62 (2002), 289-349

\bibitem[DS14]{ds14}S. Donaldson and S. Sun, Gromov-Hausdorff limits of
K\"{a}hler manifolds and algebraic geometry, Acta Math. 213(1) (2014) 63-106

\bibitem[DS16]{ds16}Datar, Ved and Sz\'{e}kelyhidi, G\'{a}bor,
K\"{a}hler-Einstein metrics along the smooth continuity method, Geometric and
Functional Analysis. 26 (4) (2016), 975-1010

\bibitem[DT92]{dt92}W. Ding and G. Tian, K\"{a}hler-Einstein metrics and the
generalized Futaki invariants, Invent. Math., 110 (1992), 315-335

\bibitem[EKA90]{eka90}A. El Kacimi-Alaoui, Operateurs transversalement
elliptiques sur un feuilletage riemannien et applications, Compos. Math. 79
(1990) 57-106

\bibitem[ELNM08]{elnm08}M. Eminenti, G. La Nave and C. Mantegazza, Ricci
solitons: The equation point of view, Manuscripta Math. 127 (2008), no. 3, 345-367

\bibitem[EMT11]{emt11}J. Enders, R. M\"{u}ller and P. M. Topping, On Type-I
singularities in Ricci flow, Comm. Anal. Geom. 19 (2011), no. 5, 905--922

\bibitem[FIK03]{fik03}M. Feldman, T. Ilmanen, D. Knopf, Rotationally symmetric
shrinking and expanding gradient K\"{a}hler-Ricci solitons, J. Differential
Geom. 65(2) (2003), no.2, 169-209

\bibitem[FLGR11]{flgr11}M. Fern\'{a}ndez-L\'{o}pez and E.
Garc\'{\i}a-R\'{\i}o, Rigidity of shrinking Ricci solitons, Math. Z. 269
(2011), no. 1--2, 461-466

\bibitem[FLGR16]{flgr16}M. Fern\'{a}ndez-L\'{o}pez and E.
Garc\'{\i}a-R\'{\i}o, On gradient Ricci solitons with constant scalar
curvature, Proc. Amer. Math. Soc. 144 (2016), no. 1, 369-378

\bibitem[Fu83]{fu83}A. Futaki, An obstruction to the existence of Einstein
K\"{a}hler metrics, Invent. Math. 73 (1983), 437-443

\bibitem[FOW09]{fow09}A. Futaki, H. Ono and G.Wang, Transverse K\"{a}hler
geometry of Sasaki manifolds and toric Sasaki--Einstein manifolds, J.
Differential Geom. 83 (2009) 585-635

\bibitem[Gh14]{gh14}A. Ghosh, Certain contact metrics as Ricci almost
solitons. Results Math. 65 (2014), 81-94

\bibitem[GMSW04]{gmsw04}J. P. Gauntlett, D. Martelli, J. Sparks and D.
Waldram, Sasaki-Einstein Metrics on $\mathbb{S}^{2}\times \mathbb{S}^{3}$, Adv.
Theor. Math. Phys. 8 (2004), 711-734

\bibitem[GKN00]{gkn00}M. Godlinski, W. Kopczynski and P. Nurowski, Locally
Sasakian manifolds, Classical Quantum Gravity 17 (2000) L105-L115

\bibitem[GS14]{gs14}A. Ghosh, R. Sharma, Sasakian metric as a Ricci soliton
and related results. J. Geom. Phys. 75 (2014), 1-6

\bibitem[GT14]{gt14}J. Ge and Z. Tang, Geometry of isoparametric hypersurfaces
in Riemannian manifolds, Asian J. Math. 18 (2014), no. 1, 117-125

\bibitem[Ha82]{ha82}Richard S. Hamilton, Three-manifolds with positive Ricci
curvature, J. Differential Geom. 17 (2) (1982), 255-306

\bibitem[Ha86]{ha86}Richard S. Hamilton, Four-manifolds with positive
curvature operator, J. Differential Geom. 24 (2) (1986), 153-179

\bibitem[Ha88]{ha88}Richard S. Hamilton, The Ricci flow on surfaces, Math and
General Relativity, Contemporary Math. 71 (1988), 237-262

\bibitem[Ha93]{ha93}Richard S. Hamilton, Eternal solutions to the Ricci flow,
J. Differential Geom. 38 (1) (1993), 1-11

\bibitem[Ha95]{ha95}Richard S. Hamilton, The formation of singularities in the
Ricci flow, in Surveys in differential geometry, Vol. II (Cambridge, MA,
1993), 7--136, Int. Press, Cambridge, MA, 1995

\bibitem[Hat63]{hat63}Y. Hatakeyama, Some notes on differentiable manifolds
with almost contact structures, Osaka Math. J. (2) 15 (1963), 176-181

\bibitem[He13]{he13}W. He, The Sasaki-Ricci flow and compact Sasaki manifolds
of positive transverse holomorphic bisectional curvature, J. Geom. Anal. 23
(2013), 1876-931

\bibitem[HM11]{hm11}Robert Haslhofer and Reto M\"{u}ller, A compactness
theorem for complete Ricci shrinkers, Geom. Funct. Anal. 21 (2011), no. 5, 1091-1116

\bibitem[HM15]{hm15}Robert Haslhofer and Reto M\"{u}ller, A note on the
compactness theorem for 4d Ricci shrinkers, Proc. Amer. Math. Soc. 144 (2015),
no. 10, 369-378

\bibitem[I92]{i92}T. A. Ivey, On solitons for the Ricci flow, ProQuest LLC,
Ann Arbor, MI, (1992), Thesis (Ph.D.) Duke University.

\bibitem[I93]{i93}T. A. Ivey, Ricci solitons on compact three-manifolds,
Differential Geom. Appl. 3 (1993), no. 4, 301-307

\bibitem[Koi90]{koi90}N. Koiso, On rotationally symmetric Hamilton's equation
for K\"{a}hler-Einstein metrics, in: Recent topics in differential and
analytic geometry, Adv. Stud. Pure Math. 18, Academic Press, Boston (1990), 327-337

\bibitem[Kol05]{kol05}J. Koll\'{a}r, Einstein metrics on five-dimensional
Seifert bundles, J. Geom. Anal. 15 (2005), no. 3, 445-476

\bibitem[Kol07]{kol07}J. Koll\'{a}r,\textit{\ }Einstein metrics on connected
sums of $\mathbb{S}^{2}\times \mathbb{S}^{3}$, J. Differential Geom. 75 (2007),
no. 2, 259-272

\bibitem[Kot08]{kot08}B. Kotschwar, On rotationally invariant shrinking Ricci
solitons, Pacific J. Math. 236 (2008), no. 1, 73-88

\bibitem[LMOR00]{lmor00}M. Lovri\'{c}, M. Min-Oo and E. A. Ruh, Deforming
transverse Riemannian metrics of foliations, Asian J. Math. 4(2) (2000) 303-314

\bibitem[LW23]{lw23}Yu Li and Bing Wang, On K\"{a}hler Ricci shrinker
surfaces, Acta Mathematica 236 (2026), no. 1, 1-50

\bibitem[MS13]{ms13}O. Munteanu and N. Sesum, On gradient Ricci solitons, J.
Geom. Anal. 23 (2013), no. 2, 539-561

\bibitem[MW15]{mw15}O. Munteanu and J. Wang, Geometry of shrinking Ricci
solitons, Compos. Math. 151 (2015), no. 12, 2273-2300

\bibitem[MW17]{mw17}O. Munteanu and J. Wang, Positively curved shrinking Ricci
solitons are compact, J. Differential Geom. 106 (2017), no. 3, 499-505

\bibitem[N10]{n10}A. Naber, Noncompact shrinking four solitons with
nonnegative curvature, J. reine angew. Math. 645 (2010), 125-153

\bibitem[NW08]{nw08}L. Ni and N. Wallach, On a classification of gradient
shrinking solitons, Math. Res. Lett. 15 (2008), no. 5, 941-955

\bibitem[Ou85]{ou85}Jos\'{e} A. Oubi\~{n}a, New classes of almost contact
metric structures, Publicationes Mathematicae Debrecen, 32 (1985), 187-193

\bibitem[Pe02]{pe02}G. Perelman, The entropy formula for the Ricci flow and
its geometric applications, preprint, arXiv: math.DG/0211159

\bibitem[Pe03.1]{pe03.1}G. Perelman, Ricci flow with surgery on
three-manifolds, preprint, arXiv: math.DG/0303109

\bibitem[Pe03.2]{pe03.2}G. Perelman, Finite extinction time for the solutions
to the Ricci flow on certain three-manifolds, preprint, arXiv: math.DG/0307245

\bibitem[PSSW09]{pssw09}D. H. Phong, J. Song, J. Sturm and B. Weinkove, The
K\"{a}hler-Ricci flow and $\overline{\partial}$-operator on vector fields, J.
Differ. Geom., 81 (2009), 631-647

\bibitem[PW09]{pw09}P. Petersen and W. Wylie, Rigidity of gradient Ricci
solitons, Pacific J. Math. 241 (2009), no. 2, 329-345

\bibitem[PW10]{pw10}P. Petersen and W. Wylie, On the classification of
gradient Ricci solitons, Geom. Topol. 14 (2010), no. 4, 2277-2300

\bibitem[Ra13]{ra13}D. Ramos, Gradient Ricci solitons on surfaces, (2013), arXiv:1304.6391

\bibitem[RT11]{rt11}J. Ross and R.P. Thomas, Weighted projective embeddings,
stability of orbifolds and constant scalar curvature K\"{a}hler metrics, JDG
88 (2011), no. 1, 109-160

\bibitem[Ru95]{ru95}P. Rukimbira, Chern-Hamilton's conjecture and
K-contactness, Houston J. Math. 21 (1995), no. 4, 709-718

\bibitem[SH62]{sh62}S. Sasaki and Y. Hatakeyama, On differentiable manifolds
with contact metric structures, J. Math. Soc. Japan 14 (1962), 249-271

\bibitem[Si88]{si88}Y. T. Siu, The existence of K\"{a}hler-Einstein metrics on
manifolds with positive anticanonical line bundle and a suitable finite
symmetry group, Ann. Math., 127 (1988), no. 3, 585-627

\bibitem[Sm62]{sm62}S. Smale, On the structure of 5-manifolds\textit{,} Ann.
of Math. (2) 75 (1962), 38-46

\bibitem[Sp11]{sp11}James Sparks, Sasaki-Einstein Manifolds, Surveys in
Differential Geometry 16 (2011), 265-324.

\bibitem[SWZ10]{swz10}K. Smoczyk, G. Wang and Y. Zhang, The Sasaki-Ricci flow,
Internat. J. Math. 21 (2010), no. 7, 951-969

\bibitem[Sz16]{sz16}Sz\'{e}kelyhidi, G\'{a}bor, The partial $C^{0}$-estimate
along the continuity method, Journal of the American Mathematical Society. 29
(2) (2016), 537-560

\bibitem[T90]{t90}G. Tian, On Calabi's conjecture for complex surfaces with
positive first Chern class, Invent. Math., 101, (1990), 101-172

\bibitem[T97]{t97}G. Tian, K\"{a}hler-Einstein metrics with positive scalar
curvature, Invent. Math. 137 (1997), no. 1, 1-37

\bibitem[T00]{t00}G. Tian, Canonical Metrics in K\"{a}hler Geometry, Lectures
in Mathematics ETH Z\={u}rich, Birkh\u{u}user Verlag, Basel, 2000

\bibitem[T15]{t15}G. Tian, K-stability and K\"{a}hler-Einstein metrics, Comm.
Pure Appl. Math. 68 (2015), no. 7, 1085--1156. Corrigendum: Comm. Pure Appl.
Math. 68 (2015), no. 11, 2082-2083

\bibitem[Ta63]{ta63}Tashiro, Y., On contact structures of hypersurfaces in
complex manifolds I, T\^{o}hoku Math. J., 15 (1963), 62-78

\bibitem[TW20]{tw20}G. Tian and F. Wang, On the existence of conic
K\"{a}hler-Einstein metrics, Adv. Math. 375 (2020), 107413, 42 pp.

\bibitem[TY87]{ty87}G. Tian, S.-T. Yau, K\"{a}hler-Einstein metrics on complex
surfaces with $C_{1}$ $>0$. Comm. Math. Phys. 112 (1987), no. 1, 175-203

\bibitem[TZha16]{tzha16}G. Tian and Z. Zhang, Regularity of K\"{a}hler-Ricci
flow on Fano manifolds, Acta Math. 216 (2016) No. 1, 127-176

\bibitem[TZhu00]{tzhu00}G. Tian and X. H. Zhu, Uniqueness of K\"{a}hler-Ricci
solitons, Acta Math. 184 (2000), 271-305

\bibitem[TZhu02]{tzhu02}G. Tian, X. Zhu, A new holomorphic invariant and
uniqueness of K\"{a}hler-Ricci solitons, Comment. Math. Helv. 77 (2002), no.
2, 297-325

\bibitem[TZhu07]{tzhu07}G. Tian and X.-H. Zhu, Convergence of K\"{a}hler-Ricci
flow, J. Amer. Math. Soc. 20 (2007), 675-699

\bibitem[TZhu13]{tzhu13}G. Tian and X.-H. Zhu, Convergence of K\"{a}hler-Ricci
flow on Fano manifolds, J. Reine Angew. Math.. 678 (2013), 223-245

\bibitem[Wa87]{wa87}Q. M. Wang, Isoparametric functions on Riemannian
manifolds. I, Math. Ann. 277 (1987), no. 4, 639-646

\bibitem[Wy08]{wy08}WilliamWylie, Complete shrinking Ricci solitons have
finite fundamental group, Proc. Amer. Math. Soc. 136 (2008), no. 5, 1803-1806

\bibitem[WZ04]{wz04}X.-J. Wang, X. Zhu, K\"{a}hler-Ricci solitons on toric
manifolds with positive first Chern class, Adv. Math. 188 (2004), no. 1, 87-103

\bibitem[WZ13]{wz13}G. Wang and Y. Zhang, The Sasaki-Ricci Flow on Sasakian
3-Spheres. Commun. Math. Stat. 1 (2013), 43-71

\bibitem[Y78]{y78}S.-T. Yau, On the Ricci curvature of a compact K\"{a}hler
manifold and the complex Monge-Amp\`{e}re equation\textit{\ I}, Comm. Pure.
Appl. Math. 31 (1978), 339-411

\bibitem[Y93]{y93}S.-T. Yau, Open problems in geometry, Proc. Symp. Pure Math.
54 (1993), 1-18

\bibitem[Zha09.1]{zha09.1}Zhu-Hong Zhang, Gradient shrinking solitons with
vanishing Weyl tensor, Pacific J. Math. 242 (2009), no. 1, 189-200

\bibitem[Zha09.2]{zha09.2}Zhu-Hong Zhang, On the completeness of gradient
Ricci solitons. Proc. Amer. Math. Soc. 137 (2009), 2755-2759

\bibitem[Zha24]{zha24}Kewei Zhang, A quantization proof of the uniform
Yau-Tian-Donaldson conjecture. J. Eur. Math. Soc. (JEMS) 26 (2024), no. 12, 4763-4778

\bibitem[Zha11]{zha11}Xi Zhang, Energy Properness and Sasakian-Einstein
Metrics. Commun. Math. Phys. 306 (2011), 229-260
\end{thebibliography}
\end{document}